\tikzstyle{vertex}=[circle, draw, inner sep=0pt, minimum size=4pt]
\newcommand{\vertex}{\node[vertex]}
\definecolor{Cerulean}{cmyk}{0.94,0.11,0,0}
\definecolor{ForestGreen}{cmyk}{0.91,0,0.88,0.12}
\definecolor{RedViolet}{cmyk}{0.07,0.90,0,0.34}
\newtheorem{theorem}{Theorem}[section]
\newtheorem{lemma}[theorem]{Lemma}
\newtheorem{proposition}[theorem]{Proposition}
\newtheorem{corollary}[theorem]{Corollary}
\theoremstyle{definition}
\newtheorem{definition}[theorem]{Definition}
\newtheorem{example}[theorem]{Example}
\newtheorem{remark}[theorem]{Remark}
\newcommand\bc{\mathbf{c}}
\newcommand\be{\mathbf{e}}
\newcommand\bx{\mathbf{x}}
\newcommand\Z{\mathbb{Z}}
\newcommand\R{\mathbb{R}}
\newcommand\calE{\mathcal{E}}
\newcommand\calF{\mathcal{F}}
\newcommand\calO{\mathcal{O}}
\newcommand\calP{\mathcal{P}}
\newcommand\calR{\mathcal{R}}
\newcommand\In{\mathrm{In}}
\newcommand\Out{\mathrm{Out}}
\newcommand\cone{\mathrm{cone}}
\newcommand\indeg{\mathrm{indeg}}
\newcommand\outdeg{\mathrm{outdeg}}
\newcommand\Adj{\mathrm{Adj}}
\newcommand{\DKK}{\mathrm{DKK}}
\newcommand{\flow}{f}
\DeclareMathOperator{\car}{car}
\title{Triangulations of Flow Polytopes, Ample Framings, and Gentle Algebras}
\author[von Bell]{Matias von Bell}
\address{Department of Mathematics\\
         University of Kentucky\\
\url{https://www.matiasvonbell.com/}}
\email{matias.vonbell@uky.edu}
\author[Braun]{Benjamin Braun}
\address{Department of Mathematics\\
         University of Kentucky\\
\url{https://sites.google.com/view/braunmath/}}
\email{benjamin.braun@uky.edu}
\author{Kaitlin Bruegge}
\address{Department of Mathematical Sciences\\
         University of Cincinnati\\
\url{https://researchdirectory.uc.edu/p/brueggkn}}
\email{kaitlin.bruegge@uc.edu}
\author[Hanely]{Derek Hanely}
\address{Department of Mathematics\\
         Penn State Behrend\\
\url{https://behrend.psu.edu/person/derek-hanely}}
\email{derek.hanely@psu.edu}
\author[Peterson]{Zachery Peterson}
\address{Department of Mathematics\\
         University of Kentucky\\
\url{https://math.as.uky.edu/users/ztpe226}}
\email{zachery.peterson@uky.edu}
\author[Serhiyenko]{Khrystyna Serhiyenko}
\address{Department of Mathematics\\
         University of Kentucky\\
\url{https://math.as.uky.edu/users/kse246}}
\email{khrystyna.serhiyenko@uky.edu}
\author[Yip]{Martha Yip}
\address{Department of Mathematics\\
         University of Kentucky\\
\url{https://www.ms.uky.edu/~myip/}}
\email{martha.yip@uky.edu}
\date{11 January 2024}
\begin{document}

\begin{abstract}
The cone of nonnegative flows for a directed acyclic graph (DAG) is known to admit regular unimodular triangulations induced by framings of the DAG.
These triangulations restrict to triangulations of the flow polytope for strength one flows, which are called DKK triangulations.
For a special class of framings called ample framings, these triangulations of the flow cone project to a complete fan.
We characterize the DAGs that admit ample framings, and we enumerate the number of ample framings for a fixed DAG.
We establish a connection between maximal simplices in DKK triangulations and $\tau$-tilting posets for certain gentle algebras, which allows us to impose a poset structure on the dual graph of any DKK triangulation for an amply framed DAG.
Using this connection, we are able to prove that for full DAGs, i.e., those DAGs with inner vertices having in-degree and out-degree equal to two, the flow polytopes are Gorenstein and have unimodal Ehrhart $h^\ast$-polynomials.
\end{abstract}

\thanks{The authors thank Emily Barnard, Christian Haase, Ford McElroy, Andreas Paffenholz, and Benjamin Nill for helpful comments.
BB, DH, and KB were partially supported by National Science Foundation award DMS-1953785.
KS was partially supported by National Science Foundation award DMS-2054255.
MY was partially supported
by Simons Collaboration Grant 429920.}

\maketitle 

\section{Introduction}\label{sec:intro}

Many problems in graph theory naturally translate to the setting of polytopes.
A prominent example of this is the study of flows on graphs and flow polytopes associated to transportation networks, which have been the subject of intense study in recent years. 
Given a directed acyclic graph (DAG) $G$ with capacity one on every edge, the polytope of flows of strength one is a lattice polytope with vertices corresponding to maximal routes in $G$.
In this paper, we call this the \emph{flow polytope for $G$}.
Flow polytopes are a central object of study in combinatorial optimization, and they also have important connections with various areas including representation theory~\cite{BV08}, diagonal harmonics~\cite{LMM16}, Grothendieck polynomials~\cite{LMS19, MS17}, and toric geometry~\cite{EM16}.

The cone of nonnegative flows for a DAG is known to admit regular unimodular triangulations induced by combinatorial structures called framings of the DAG.
These triangulations restrict to triangulations of the flow polytope, which are called \emph{DKK triangulations}, as they were initially studied by Danilov, Karzanov, and Koshevoy~\cite{DKK12}.
For a special class of framings called \emph{ample framings}, these triangulations of the flow cone project along a special simplex to a complete fan.
For each ample framing of $G$, we obtain a regular unimodular triangulation of this type.

Recently, the class of flow polytopes of $\nu$-caracol graphs $\mathrm{car}(\nu)$ were studied. 
By applying the Lidskii volume formula~\cite{BV08}, it was shown in~\cite{MM19} that the normalized volume of the flow polytope for $\mathrm{car}(\nu)$ is the generalized Catalan number which enumerates the number of lattice paths lying above a fixed path $\nu$.  
This motivated the study of unimodular triangulations of these polytopes.
It was shown in~\cite{BGMY} that the flow polytope for $\mathrm{car}(\nu)$ possesses DKK triangulations whose dual graphs are two ubiquitous lattice structures: the $\nu$-Tamari lattice and the lattice of order filters of a certain subset of the type $A$ root poset.
There are other families of flow polytopes whose normalized volumes are combinatorially interesting.
This leads to the question, do these flow polytopes also have unimodular triangulations that admit lattice structures?
To address this question, we establish a relationship between certain gentle algebras and triangulations of flow polytopes. In particular, we show that the dual graph of certain DKK triangulations is the Hasse diagram of the $\tau$-tilting poset for an associated gentle algebra.
Since finite $\tau$-tilting posets are lattices \cite[Corollary 3.12]{DIRRT}, we obtain the lattice structure on the dual graph of the DKK triangulation.  

Gentle algebras are an important class of finite dimensional algebras introduced in \cite{AssSko}, and their module categories are well understood in combinatorial terms by the work of \cite{ButlerRingel}.
However, in recent years the interest in gentle algebras significantly increased, and there has been a lot of new developments in this area.
In particular, their derived categories appear in the context of homological mirror symmetry \cite{HKK17, LP20}, and they can be modeled combinatorially via surfaces with marked points \cite{OpperPS}.
Moreover, the $\tau$-tilting posets of gentle algebras are related to the study of non-kissing complexes and non-crossing partitions, see \cite{PPP} and references therein.

Our focus is on flow polytopes for DAGs that admit ample framings.
This work should be of interest to combinatorialists and discrete geometers interested in flow polytopes, and to researchers in cluster algebras and representation theory.
We offer three main contributions:
\begin{enumerate}
    \item A classification of DAGs that admit ample framings (Lemma~\ref{lem:amplemeansfull}, Theorem~\ref{thm:ampleclassification}, Corollary~\ref{cor:amplecharacterizationiff}), and the enumeration of ample framings for such a DAG (Theorem~\ref{thm:poweroftwo}, Corollary~\ref{cor:poweroftwovalid}).
    \item A proof that the flow polytopes for a large class of DAGs, which we call \emph{full}, are Gorenstein (Theorem~\ref{thm:fullgorenstein}) and have unimodal $h^\ast$-polynomials (Corollary~\ref{cor:fullhstarunimodal}).
    \item A new connection between the dual graphs of DKK triangulations and $\tau$-tilting posets for gentle algebras (Theorem~\ref{routes-modules}, Theorem~\ref{maximal_cliques}); it is this connection that we use to prove the Gorenstein and unimodality results.
\end{enumerate}

To these ends, in Section~\ref{sec:ample}, we classify the DAGs that admit ample framings, and determine the size of the set of exceptional routes that form the special simplex for projection.
This classification identifies a particular class of DAGs called \emph{full} DAGs that play a key role in the study of ample framings.
In Section~\ref{sec:enumerating}, we enumerate the ample framings in DAGs and compute the number of ample framings for a particular family of examples.

In Section~\ref{sec:bijection}, for a full DAG $G$ with a DKK triangulation coming from an ample framing, we establish a connection to $\tau$-tilting posets for gentle algebras.
We prove that there is a bijection between the dual graph of the DKK triangulation of the flow polytope for $G$ and the $\tau$-tilting poset for a particular gentle algebra associated to $G$; this allows us to impose that poset structure on the dual graph of the DKK triangulation.
In Section~\ref{sec:gorenstein}, we use known properties of these $\tau$-tilting posets to show that the Ehrhart $h^\ast$-polynomial of the flow polytope for $G$ has symmetric coefficients.
Using this, we conclude that the flow polytope for a full DAG $G$ is always Gorenstein.
This combined with the regular unimodular triangulation allows us to conclude that its $h^\ast$-vector is unimodal, generalizing results of~\cite{AJR}.
Finally, we identify the routes in $G$ that yield special simplices arising from DKK triangulations.



\section{Background on Flow Polytopes}\label{sec:background}

\subsection{Flows and Flow Polytopes}\label{subsec:flowsandflowpolytopes}
In this section, we review fundamental background regarding flow polytopes; we generally follow the exposition by Danilov, Karzanov and Koshevoy in~\cite{DKK12}.
Let $G = (V,E)$ be a finite directed acyclic graph (DAG) with vertex set $V$ and edge set $E$. 
For each $v\in V$, let $\mathrm{in}(v)$ and $\mathrm{out}(v)$ denote the incoming and outgoing edges of $v$ respectively. 
A vertex $v$ is called a \emph{source} if $\mathrm{in}(v) =\emptyset$ and it is called a \emph{sink} if $\mathrm{out}(v) = \emptyset$. Any other vertices are called \emph{inner vertices}. 
A \emph{route} in $G$ is a maximal path in $G$, i.e., a path beginning at a source and ending at a sink. 
The set of all routes is denoted $\calP=\calP(G)$. 
If $v$ is a vertex on the route $R$ then it splits $R$ into two subpaths. Let $Rv$ denote the subpath of $R$ from the source of $R$ to $v$, and let $vR$ denote the subpath from $v$ to the sink. If the edges in $G$ are assigned some linear order $e_1$, $e_2$, $\ldots$, $e_{|E|}$, then for a route $R \in \calP(G)$ we define its characteristic vector to be $v_R =\sum_{e_i\in R} \be_{i}$.
If the edges in $G$ are not assigned a linear order, then we will index the space $\R^{|E|}$ by the edges $e\in E$.

We will use the following graph in our running example. 


\begin{example}
The graph $G=\car(8)$ in Figure~\ref{fig:car8} is known as the caracol graph on eight vertices.
The source and sink are vertices $1$ and $8$ respectively. The vertices in $\{2,3,\ldots,7\}$ are inner vertices. 
The edges $(1,3)$, $(3,4)$, and $(4,8)$ form a route $R$. 
In this case $R4$ is the path consisting of edges $(1,3)$ and $(3,4)$ while $4R$ is the single edge $(4,8)$.  
If the edges of $R$ appear as the second, eighth, and fourteenth edges in a linear ordering of the edges of $G$, then the characteristic vector $v_R$ is $(0,1, 0,0, 0,0, 0,1, 0,0, 0,0, 0,1, 0,0, 0)$. 
\end{example}

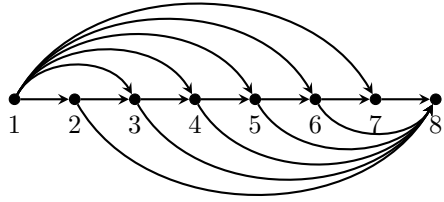
\begin{figure}
\begin{center}
\begin{tikzpicture}[scale=1]
\begin{scope}[scale=0.8, xshift=0, yshift=0]
	\vertex[fill,label=below:\footnotesize{$1$}](a0) at (0,0) {};
	\vertex[fill,label=below:\footnotesize{$2$}](a1) at (1,0) {};
	\vertex[fill,label=below:\footnotesize{$3$}](a2) at (2,0) {};
	\vertex[fill,label=below:\footnotesize{$4$}](a3) at (3,0) {};
	\vertex[fill,label=below:\footnotesize{$5$}](a4) at (4,0) {};
	\vertex[fill,label=below:\footnotesize{$6$}](a5) at (5,0) {};
	\vertex[fill,label=below:\footnotesize{$7$}](a6) at (6,0) {};
	\vertex[fill,label=below:\footnotesize{$8$}](a7) at (7,0) {};

    \draw[-stealth, thick] (a0) -- (a1);
    \draw[-stealth, thick] (a0) to[out=60,in=120] (a2);
    \draw[-stealth, thick] (a0) to[out=60,in=120] (a3);
    \draw[-stealth, thick] (a0) to[out=60,in=120] (a4);
    \draw[-stealth, thick] (a0) to[out=60,in=120] (a5);
    \draw[-stealth, thick] (a0) to[out=60,in=120] (a6);

	\draw[-stealth, thick] (a1) to[out=-60,in=-120] (a7);
	\draw[-stealth, thick] (a2) to[out=-60,in=-120] (a7);
	\draw[-stealth, thick] (a3) to[out=-60,in=-120] (a7);
	\draw[-stealth, thick] (a4) to[out=-60,in=-120] (a7);
	\draw[-stealth, thick] (a5) to[out=-60,in=-120] (a7);
    \draw[-stealth, thick] (a6) -- (a7);
    
	\draw[-stealth, thick] (a1)--(a2);
	\draw[-stealth, thick] (a2)--(a3);
    \draw[-stealth, thick] (a3)--(a4);
    \draw[-stealth, thick] (a4)--(a5);
    \draw[-stealth, thick] (a5)--(a6);

\end{scope}
\end{tikzpicture}
\end{center}
\caption{The graph $\car(8)$.} 
\label{fig:car8}
\end{figure}

\begin{definition}\label{def:flow}
A \emph{flow} $\flow$ on a DAG $G$ is a function $\flow: E \to \R$ which preserves flow at each inner vertex, i.e., for every inner vertex $v$ we have
\[
\sum_{e\,\in \, \mathrm{in}(v)} \flow(e) = \sum_{e\,\in \, \mathrm{out}(v)} \flow(e) \, .
\]
Let $\calF = \calF(G)$ denote the space of flows on $G$, and let $\calF_+ = \calF_+(G)$ denote the cone of flows satisfying $f(e)\geq 0$ for all edges $e\in E$.
The \emph{flow polytope} $\calF_1 = \calF_1(G)$ is the set of all nonnegative flows on $G$ of size one, i.e., flows satisfying
\[
\sum_{\substack{v \text{ is a source} \\ e \,\in \, \mathrm{out}(v)}} \flow(e) = 1 \, .
\]
\end{definition}

The following proposition is a straightforward consequence of the total unimodularity of the signed incidence matrix for $G$~\cite[Theorem 4.9]{CCZ} and row reduction.

\begin{proposition}\label{prop:flowbasics}
Given a DAG $G$, the set of flows $\calF(G)$ forms a vector subspace of $\R^{|E|}$ spanned by the characteristic vectors of the routes and has dimension
\[
\dim(\calF(G)) = |E| - \#\{v \in V(G) : v \text{ is an inner vertex }\} \, .
\]
The vertices of $\calF_1$ are the characteristic vectors $\{v_R : R\in \calP(G)\}$ and 
\[
\dim( \mathcal{F}_1) = |E| - \#\{v \in V(G) : v \text{ is an inner vertex } \} -1 \, .
\]
\end{proposition}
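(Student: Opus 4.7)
The plan is to reduce everything to linear algebra of the signed vertex-edge incidence matrix $M$ of $G$, combined with the standard flow-decomposition theorem. By definition $\calF(G)$ is the kernel of $M_I$, the submatrix of $M$ whose rows are indexed by the inner vertices $I$, so $\dim \calF(G) = |E| - \mathrm{rank}(M_I)$, and the first dimension formula reduces to showing that the rows of $M_I$ are linearly independent.

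For this linear independence, I would use the standard fact that the only linear dependence among rows of the full signed incidence matrix of a connected graph is the signed sum equal to zero. Within each connected component of $G$, finiteness and acyclicity guarantee at least one source and one sink, so when we pass from $M$ to $M_I$ we remove at least one row per component, destroying every such dependence. Hence $M_I$ has full row rank equal to the number of inner vertices. As the excerpt suggests, one can alternatively invoke the total unimodularity of $M$ from \cite[Theorem~4.9]{CCZ} together with row reduction to reach the same conclusion.

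Next, for any route $R$ the characteristic vector $v_R$ lies in $\calF(G)$ because $R$ uses exactly one incoming and one outgoing edge at each inner vertex it visits. To see that these vectors span $\calF(G)$, I would invoke flow decomposition: every $f \in \calF_+(G)$ can be written as $\sum_{R \in \calP(G)} c_R v_R$ with $c_R \geq 0$. Since $\calF_+(G)$ has nonempty relative interior in $\calF(G)$, any $f \in \calF(G)$ can be translated into $\calF_+(G)$ by adding a sufficiently large positive multiple of $\sum_R v_R$, decomposed, and then subtracted, expressing $f$ as a $\mathbb{R}$-linear combination of the $v_R$.

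For $\calF_1$, observe that it is the intersection of $\calF_+(G)$ with the affine hyperplane of size-one flows, and this single constraint is nonredundant on $\calF(G)$ because the zero flow has size zero, so the dimension drops by exactly one, yielding the stated formula. The vertex description then has two halves: each $v_R$ is extremal in $\calF_1$ because any convex combination $\lambda u + (1-\lambda)w = v_R$ in $\calF_1$ forces $u,w \geq 0$ to be supported on the edges of $R$, and conservation on that support forces both to equal $v_R$. Conversely, if $f \in \calF_1$ decomposes as $f = \sum c_R v_R$ with $c_R \geq 0$, then summing the source-outgoing coordinates gives $\sum c_R = 1$, exhibiting $f$ as a convex combination of the $v_R$, so any vertex must be a single $v_R$. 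The main technical obstacle is the row-rank computation for $M_I$, but since this follows from standard properties of incidence matrices already cited in the excerpt, the rest is essentially bookkeeping.
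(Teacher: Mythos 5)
Your argument is correct. The paper gives no proof of this proposition beyond asserting that it is a straightforward consequence of the total unimodularity of the signed incidence matrix and row reduction; your full-row-rank computation for the inner-vertex submatrix $M_I$ (removing at least one source row and one sink row per component kills the unique dependence among the rows of the incidence matrix) is exactly the ``row reduction'' half, while your use of flow decomposition together with the direct extremality check replaces the appeal to total unimodularity for the vertex description --- a trade that is, if anything, more self-contained, since even the total-unimodularity route ultimately needs a decomposition argument to identify integral size-one flows with routes. One small point of hygiene: the justification for translating an arbitrary $f\in\calF(G)$ into $\calF_+(G)$ should be that in a finite DAG every edge lies on at least one route (extend backward to a source and forward to a sink), so $\sum_R v_R$ is strictly positive in every coordinate; the phrase ``nonempty relative interior'' is circular there, as that is essentially what the spanning claim establishes.
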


In some cases, we can simplify $G$ by contracting some of its edges without changing the lattice-polyhedral structure of $\calF_+(G)$ or $\calF_1$.
Note that all of the facets of $\calF_+(G)$ are given by $x_e= 0$ for an edge $e\in E$.
An edge is said to be \emph{idle} if it is the only incoming or outgoing edge from an inner vertex. 
Contracting an idle edge $e$ corresponds geometrically to a projection along the coordinate $x_e$.
We can find non-redundant facet descriptions of the cone of flows via sequences of idle edge contractions in the following manner.

\begin{definition}\label{def:reductiontofull}
Given a DAG $G$, produce a new DAG $G^1$ by contracting an idle edge in $G$.
Inductively construct $G^i$ by contracting an idle edge in $G^{i-1}$, and continue this process until there are no idle edges, resulting in the DAG $H$.
We call $H$ a \emph{complete contraction} of $G$.
\end{definition}

An example is given in Figure~\ref{fig:nu-graph}.

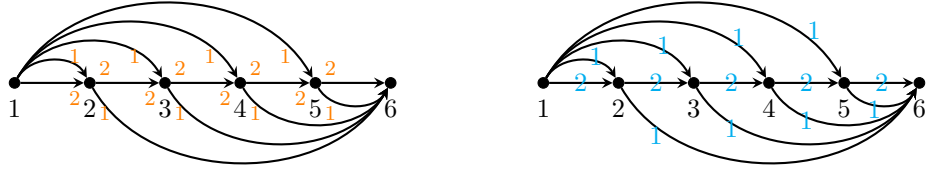
\begin{figure}
\begin{center}
\begin{tikzpicture}
\begin{scope}[scale=1, xshift=0, yshift=0]
	\vertex[fill,label=below:\footnotesize{$1$}](a1) at (1,0) {};
	\vertex[fill,label=below:\footnotesize{$2$}](a2) at (2,0) {};
	\vertex[fill,label=below:\footnotesize{$3$}](a3) at (3,0) {};
	\vertex[fill,label=below:\footnotesize{$4$}](a4) at (4,0) {};
	\vertex[fill,label=below:\footnotesize{$5$}](a5) at (5,0) {};
	\vertex[fill,label=below:\footnotesize{$6$}](a6) at (6,0) {};
	
	\draw[-stealth, thick] (a1)--(a2);
	\draw[-stealth, thick] (a2)--(a3);
	\draw[-stealth, thick] (a3)--(a4);
	\draw[-stealth, thick] (a4)--(a5);
	\draw[-stealth, thick] (a5)--(a6);
	\draw[-stealth, thick] (a1) to[out=60,in=120] (a2);
	\draw[-stealth, thick] (a1) to[out=60,in=120] (a3);
	\draw[-stealth, thick] (a1) to[out=60,in=120] (a4);
	\draw[-stealth, thick] (a1) to[out=60,in=120] (a5);
	\draw[-stealth, thick] (a1) to[out=60,in=120] (a6);
	\draw[-stealth, thick] (a1) to[out=-60,in=-120] (a6);
	\draw[-stealth, thick] (a2) to[out=-60,in=-120] (a6);
	\draw[-stealth, thick] (a3) to[out=-60,in=-120] (a6);
	\draw[-stealth, thick] (a4) to[out=-60,in=-120] (a6);
	\draw[-stealth, thick] (a5) to[out=-60,in=-120] (a6);
	
    \node[] at (1.8,0.35) {\textcolor{orange}{\tiny$1$}};
    \node[] at (1.8,-0.2) {\textcolor{orange}{\tiny$2$}};
    \node[] at (2.2,-0.4) {\textcolor{orange}{\tiny$1$}};
    \node[] at (2.2,0.2) {\textcolor{orange}{\tiny$2$}};
    \node[] at (2.6,0.35) {\textcolor{orange}{\tiny$1$}};
    \node[] at (2.8,-0.2) {\textcolor{orange}{\tiny$2$}};
    \node[] at (3.2,-0.4) {\textcolor{orange}{\tiny$1$}};
    \node[] at (3.2,0.2) {\textcolor{orange}{\tiny$2$}};
    \node[] at (3.6,0.35) {\textcolor{orange}{\tiny$1$}};
    \node[] at (3.8,-0.2) {\textcolor{orange}{\tiny$2$}};
    \node[] at (4.2,-0.4) {\textcolor{orange}{\tiny$1$}};
    \node[] at (4.2,0.2) {\textcolor{orange}{\tiny$2$}};
    \node[] at (4.6,0.35) {\textcolor{orange}{\tiny$1$}};
    \node[] at (4.8,-0.2) {\textcolor{orange}{\tiny$2$}};
    \node[] at (5.2,-0.4) {\textcolor{orange}{\tiny$1$}};
    \node[] at (5.2,0.2) {\textcolor{orange}{\tiny$2$}};
\end{scope}

\begin{scope}[scale=1, xshift=200, yshift=0]
	\vertex[fill,label=below:\footnotesize{$1$}](a1) at (1,0) {};
	\vertex[fill,label=below:\footnotesize{$2$}](a2) at (2,0) {};
	\vertex[fill,label=below:\footnotesize{$3$}](a3) at (3,0) {};
	\vertex[fill,label=below:\footnotesize{$4$}](a4) at (4,0) {};
	\vertex[fill,label=below:\footnotesize{$5$}](a5) at (5,0) {};
	\vertex[fill,label=below:\footnotesize{$6$}](a6) at (6,0) {};
	
	\draw[-stealth, thick] (a1)--(a2);
	\draw[-stealth, thick] (a2)--(a3);
	\draw[-stealth, thick] (a3)--(a4);
	\draw[-stealth, thick] (a4)--(a5);
	\draw[-stealth, thick] (a5)--(a6);
	\draw[-stealth, thick] (a1) to[out=60,in=120] (a2);
	\draw[-stealth, thick] (a1) to[out=60,in=120] (a3);
	\draw[-stealth, thick] (a1) to[out=60,in=120] (a4);
	\draw[-stealth, thick] (a1) to[out=60,in=120] (a5);
	\draw[-stealth, thick] (a1) to[out=60,in=120] (a6);
	\draw[-stealth, thick] (a1) to[out=-60,in=-120] (a6);
	\draw[-stealth, thick] (a2) to[out=-60,in=-120] (a6);
	\draw[-stealth, thick] (a3) to[out=-60,in=-120] (a6);
	\draw[-stealth, thick] (a4) to[out=-60,in=-120] (a6);
	\draw[-stealth, thick] (a5) to[out=-60,in=-120] (a6);
	
    \node[] at (5.5,0) {\textcolor{cyan}{\footnotesize$2$}};
    \node[] at (4.5,0) {\textcolor{cyan}{\footnotesize$2$}};	
    \node[] at (3.5,0) {\textcolor{cyan}{\footnotesize$2$}};	
    \node[] at (2.5,0) {\textcolor{cyan}{\footnotesize$2$}};	
    \node[] at (1.5,0) {\textcolor{cyan}{\footnotesize$2$}};	

    \node[] at (1.7,0.35) {\textcolor{cyan}{\footnotesize$1$}};
    \node[] at (2.6,0.5) {\textcolor{cyan}{\footnotesize$1$}};
    \node[] at (3.6,0.6) {\textcolor{cyan}{\footnotesize$1$}};
    \node[] at (4.6,0.7) {\textcolor{cyan}{\footnotesize$1$}};    

    \node[] at (2.5,-0.7) {\textcolor{cyan}{\footnotesize$1$}};
    \node[] at (3.5,-0.6) {\textcolor{cyan}{\footnotesize$1$}};
    \node[] at (4.5,-0.5) {\textcolor{cyan}{\footnotesize$1$}};
    \node[] at (5.4,-0.35) {\textcolor{cyan}{\footnotesize$1$}};
\end{scope}
\end{tikzpicture}
\end{center}
\caption{Example of a complete contraction of the graph $\car(8)$ where the first and last idle edges have been contracted. The left graph shows a framing at the inner vertices labeled in orange.
This framing induces a labeling on each edge $(u,v)$ representing its ordering in both \(\mathrm{in}(v)\) and \(\mathrm{out}(u)\). The induced edge-labeling is shown in blue on the right.
}
\label{fig:nu-graph}
\end{figure}

\begin{proposition}\label{prop:contractidle}
If $G$ is a DAG, then the set of facets of the cone $\calF_+(G)$ can be identified with the set of edges in a complete contraction of $G$.
\end{proposition}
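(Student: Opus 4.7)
The plan is to iterate contraction of idle edges: at each step the edge count strictly decreases, so the process terminates at a DAG $H$ with no idle edges, and the proposition reduces to two facts: (i) contracting a single idle edge $e$ yields a linear isomorphism of flow cones under which the inequality $x_e \geq 0$ is redundant while the remaining coordinate inequalities correspond bijectively to edges of $G^1$, and (ii) every coordinate inequality $x_e \geq 0$ cuts out a facet of $\calF_+(G)$ when $G$ has no idle edges. Composing the isomorphisms from (i) identifies $\calF_+(G)$ with $\calF_+(H)$, and (ii) applied to $H$ identifies the facets with $E(H)$.

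For (i), suppose $e = (u,v)$ is idle because $\mathrm{in}(v) = \{e\}$; the case $\mathrm{out}(u) = \{e\}$ is symmetric. Flow conservation at the inner vertex $v$ forces $f(e) = \sum_{e' \in \mathrm{out}(v)} f(e')$, and so the coordinate projection dropping $x_e$ restricts to a linear bijection $\calF(G) \to \calF(G^1)$ with inverse given by this formula. Since the right-hand side is a sum of nonnegative quantities, this bijection further restricts to $\calF_+(G) \to \calF_+(G^1)$, and the inequality $x_e \geq 0$ is implied by the inequalities $x_{e'} \geq 0$ for $e' \in \mathrm{out}(v)$, hence redundant; the remaining coordinate inequalities match up under the natural identification of $E(G) \setminus \{e\}$ with $E(G^1)$.

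For (ii), fix $e \in E(H)$. The face $\{f \in \calF_+(H) : f(e) = 0\}$ equals $\calF_+(H \setminus e)$. Because every inner vertex of $H$ has in-degree and out-degree at least two, deleting $e$ does not turn any inner vertex into a source or sink, so $H \setminus e$ and $H$ have the same number of inner vertices; Proposition~\ref{prop:flowbasics} then gives $\dim \calF(H \setminus e) = \dim \calF(H) - 1$. To confirm the face attains this dimension, I would exhibit a strictly positive flow on $H \setminus e$ by summing characteristic vectors of routes of $H \setminus e$ covering every remaining edge, using the standard fact that every edge of a DAG lies on a source-to-sink path. Distinct edges yield distinct facets because the supports of their zero sets differ.

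The main obstacle is the bookkeeping in step (ii): one must verify that deleting a single edge from an idle-edge-free DAG does not silently reduce the inner-vertex count, since this is precisely what makes the dimension drop by exactly one. Once the condition $|\mathrm{in}(v)|, |\mathrm{out}(v)| \geq 2$ at every inner vertex is invoked to rule out source/sink creation, Proposition~\ref{prop:flowbasics} does the rest.
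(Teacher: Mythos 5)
Your argument is correct and follows exactly the route the paper intends: the paper states this proposition without a formal proof, relying on the preceding remarks that all facets of $\calF_+(G)$ come from the inequalities $x_e\geq 0$ and that contracting an idle edge is a coordinate projection, and your proposal simply supplies the details (the flow-conservation identity $f(e)=\sum_{e'\in\mathrm{out}(v)}f(e')$ making $x_e\geq0$ redundant, and the dimension count via Proposition~\ref{prop:flowbasics} showing each remaining $x_e=0$ is a genuine facet). No gaps; the key observation that deleting an edge from an idle-edge-free DAG cannot create a new source or sink is exactly what is needed for the dimension to drop by one.
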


\subsection{Framings, Coherent Routes, and the DKK Triangulation}\label{subsec:framingsroutesDKK}

In this subsection we review basic definitions and properties regarding framed graphs, routes, and coherence, including a main result of Danilov, Karzanov, and Koshevoy~\cite{DKK12} constructing triangulations of flow cones for framed graphs.

\begin{definition}\label{def:framing}
Let $G$ be a DAG.
For each inner vertex $v$ of $G$, assign a linear order to the edges in $\mathrm{in}(v)$ and also assign a linear order to the edges in $\mathrm{out}(v)$.
This assignment is called a \emph{framing} of $G$, which we denote by $F$.
We call a DAG $G$ with a framing $F$ a \emph{framed graph}, which we often denote by $[G,F]$.
If $e$ is less than $f$ in the linear order for $F$ on $\mathrm{in}(v)$, we write $e\prec_{F,\mathrm{in}(v)} f$ (and similarly for $\mathrm{out}(v)$).
When $F$ and/or $\mathrm{in}(v)$ or $\mathrm{out}(v)$ is clear, we sometimes drop one or both subscripts from $\prec_{F,\mathrm{in}(v)}$.
\end{definition}

\begin{example}\label{ex:car8framing}
Consider the DAG in Figure~\ref{fig:nu-graph}.
Define a framing by assigning the linear order $\mathrm{in}(v)=\{(j,v)<(i,v)\}$ when $j<i$, assigning the linear order $\mathrm{out}(v)=\{(v,j)<(v,i)\}$ when $i<j$, and by ordering the multiedges from $1$ to $2$ and from $5$ to $6$ by setting the shorter length edge in the picture to be second in the pair.
This is called the \emph{length framing} because the longer length edges in the picture are first in their orders and the shorter edges are second.
\end{example}

For a DAG $G$ and an inner vertex $v$, let $\In(v)$ denote the set of maximal paths in $G$ from a source to $v$ and let $\Out(v)$ denote the set of maximal paths in $G$ from $v$ to a sink.
Given a framing $F$ on $G$, the linear orders from $F$ induce an ordering on the sets $\In(v)$ and $\Out(v)$ as follows.

\begin{definition}\label{def:inoutpathorders}
Let $[G,F]$ be a framed graph.
Let $P$ and $Q$ be paths in $\Out(v)$ that coincide on the subpaths $P'\subset P$ and $Q' \subset Q$ that begin at $v$ and end at $w$.
Suppose that the vertices following $w$ on $P$ and $Q$ are distinct; call them $w_P$ and $w_Q$.
Set $P\prec_{F,\Out(v)} Q$ if $(w,w_P)\prec_{F,\mathrm{out}(w)} (w,w_Q)$, and similarly for $P\prec_{F,\In(v)} Q$.
When $F$ and/or $\In(v)$ or $\Out(v)$ is clear from context, we will sometimes drop one or both subscripts from $\prec_{F,\In(v)}$.
\end{definition}

\begin{example}\label{ex:car8inoutpaths}
Consider the paths $P=346$ and $Q=3456$ in the graph $H$ from Figure~\ref{fig:nu-graph}, using the length framing given in Example~\ref{ex:car8framing}.
In the set $\Out(3)$, we have that $P\prec_{F,\mathrm{Out}(3)} Q$.
For the paths $A=14$ and $B=1234$ in $\In(4)$, we have that $A\prec_{F,\In(4)} B$.
\end{example}

\begin{definition}\label{def:coherent}
Suppose that $P$ and $Q$ are routes in a framed graph $[G,F]$ that intersect at a common inner vertex $v$.
$P$ and $Q$ are \emph{in conflict}, also called \emph{conflicting}, if $Pv\prec_{\In(v)} Qv$ and $vQ \prec_{\Out(v)} vP$.
If $P$ and $Q$ are not conflicting at $v$, then they are \emph{coherent} at $v$.
$P$ and $Q$ are called \emph{coherent} if they are coherent at every inner vertex $v$ that is contained in both $P$ and $Q$.
\end{definition}

\begin{example}\label{ex:car8coherent}
Consider the routes $P=1346$ and $Q=1236$ in the graph $H$ from Figure~\ref{fig:nu-graph}, using the length framing given in Example~\ref{ex:car8framing}.
Then $3$ is an inner vertex common to both $P$ and $Q$, and $P$ and $Q$ are in conflict at $3$.
Hence, $P$ and $Q$ are not coherent.
As a second example, consider the routes $P'=13456$ and $Q'=12346$.
These intersect at both $3$ and $4$ and share the edge $(3,4)$.
The routes $P'$ and $Q'$ are also in conflict at both $3$ and $4$.
On the other hand, the pair of routes $136$ and $123456$ is coherent.
\end{example}

\begin{definition}\label{def:cliqueexceptional}
Given a framed graph $[G,F]$, a \emph{clique} is a set of pairwise-coherent routes in $G$.
If a route $R$ in $G$ is coherent with every other route in $G$, we say that $R$ is \emph{exceptional}.
In this case, $R$ is an element of every maximal clique of routes.
\end{definition}

\begin{example}\label{ex:cliqueandexceptional}
In the graph from Figure~\ref{fig:nu-graph}, using the length framing given in Example~\ref{ex:car8framing}, there are five exceptional routes: $123456$, $126$, $136$, $146$, and $156$, where the choice of which of the two edges labeled $12$ is made to ensure coherence with all other routes (and similarly for $56$).
An example of a maximal clique is given by these five routes together with $123456$, $13456$, $1456$, and $156$, where again we select the appropriate multiedge from $12$ and $56$ to avoid conflicts. The routes of this clique are illustrated in Figure~\ref{fig:exceptional routes of contracted car(8)}.
\end{example}

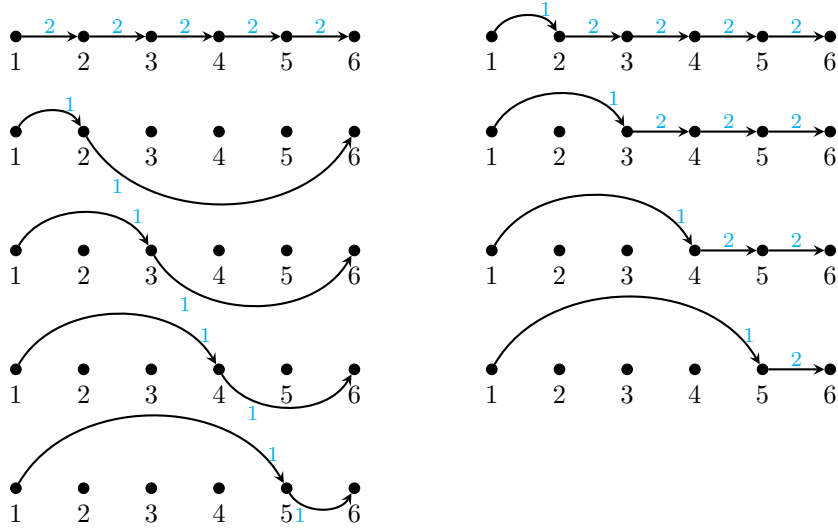
\begin{figure}
\begin{center}
\begin{tikzpicture}
\begin{scope}[scale=0.9, xshift=0, yshift=0]
	\vertex[fill,label=below:\footnotesize{$1$}](a1) at (1,0) {};
	\vertex[fill,label=below:\footnotesize{$2$}](a2) at (2,0) {};
	\vertex[fill,label=below:\footnotesize{$3$}](a3) at (3,0) {};
	\vertex[fill,label=below:\footnotesize{$4$}](a4) at (4,0) {};
	\vertex[fill,label=below:\footnotesize{$5$}](a5) at (5,0) {};
	\vertex[fill,label=below:\footnotesize{$6$}](a6) at (6,0) {};
	
	\draw[-stealth, thick] (a1)--(a2);
	\draw[-stealth, thick] (a2)--(a3);
	\draw[-stealth, thick] (a3)--(a4);
	\draw[-stealth, thick] (a4)--(a5);
	\draw[-stealth, thick] (a5)--(a6);

    
    \node[] at (1.5,0.15) {\textcolor{cyan}{\tiny$2$}}; 
    \node[] at (2.5,0.15) {\textcolor{cyan}{\tiny$2$}}; 
    \node[] at (3.5,0.15) {\textcolor{cyan}{\tiny$2$}}; 
    \node[] at (4.5,0.15) {\textcolor{cyan}{\tiny$2$}}; 
    \node[] at (5.5,0.15) {\textcolor{cyan}{\tiny$2$}}; 
    

\end{scope}

\begin{scope}[scale=0.9, xshift=0, yshift=-40]
	\vertex[fill,label=below:\footnotesize{$1$}](a1) at (1,0) {};
	\vertex[fill,label=below:\footnotesize{$2$}](a2) at (2,0) {};
	\vertex[fill,label=below:\footnotesize{$3$}](a3) at (3,0) {};
	\vertex[fill,label=below:\footnotesize{$4$}](a4) at (4,0) {};
	\vertex[fill,label=below:\footnotesize{$5$}](a5) at (5,0) {};
	\vertex[fill,label=below:\footnotesize{$6$}](a6) at (6,0) {};
	
	\draw[-stealth, thick] (a1) to[out=60,in=120] (a2);
	\draw[-stealth, thick] (a2) to[out=-60,in=-120] (a6);

    \node[] at (1.8,0.4) {\textcolor{cyan}{\tiny$1$}}; 
    
    
    \node[] at (2.5,-0.8) {\textcolor{cyan}{\tiny$1$}}; 
\end{scope}

\begin{scope}[scale=0.9, xshift=0, yshift=-90]
	\vertex[fill,label=below:\footnotesize{$1$}](a1) at (1,0) {};
	\vertex[fill,label=below:\footnotesize{$2$}](a2) at (2,0) {};
	\vertex[fill,label=below:\footnotesize{$3$}](a3) at (3,0) {};
	\vertex[fill,label=below:\footnotesize{$4$}](a4) at (4,0) {};
	\vertex[fill,label=below:\footnotesize{$5$}](a5) at (5,0) {};
	\vertex[fill,label=below:\footnotesize{$6$}](a6) at (6,0) {};
	
 	\draw[-stealth, thick] (a1) to[out=60,in=120] (a3);
 	\draw[-stealth, thick] (a3) to[out=-60,in=-120] (a6);

    \node[] at (2.8,0.5) {\textcolor{cyan}{\tiny$1$}}; 
    
    
    \node[] at (3.5,-0.8) {\textcolor{cyan}{\tiny$1$}}; 
\end{scope}

\begin{scope}[scale=0.9, xshift=0, yshift=-140]
	\vertex[fill,label=below:\footnotesize{$1$}](a1) at (1,0) {};
	\vertex[fill,label=below:\footnotesize{$2$}](a2) at (2,0) {};
	\vertex[fill,label=below:\footnotesize{$3$}](a3) at (3,0) {};
	\vertex[fill,label=below:\footnotesize{$4$}](a4) at (4,0) {};
	\vertex[fill,label=below:\footnotesize{$5$}](a5) at (5,0) {};
	\vertex[fill,label=below:\footnotesize{$6$}](a6) at (6,0) {};
	
 	\draw[-stealth, thick] (a1) to[out=60,in=120] (a4);
 	\draw[-stealth, thick] (a4) to[out=-60,in=-120] (a6);

     \node[] at (3.8,0.5) {\textcolor{cyan}{\tiny$1$}}; 
    
    
     \node[] at (4.5,-0.65) {\textcolor{cyan}{\tiny$1$}}; 

\end{scope}

\begin{scope}[scale=0.9, xshift=0, yshift=-190]
	\vertex[fill,label=below:\footnotesize{$1$}](a1) at (1,0) {};
	\vertex[fill,label=below:\footnotesize{$2$}](a2) at (2,0) {};
	\vertex[fill,label=below:\footnotesize{$3$}](a3) at (3,0) {};
	\vertex[fill,label=below:\footnotesize{$4$}](a4) at (4,0) {};
	\vertex[fill,label=below:\footnotesize{$5$}](a5) at (5,0) {};
	\vertex[fill,label=below:\footnotesize{$6$}](a6) at (6,0) {};
	
 	\draw[-stealth, thick] (a1) to[out=60,in=120] (a5);
 	\draw[-stealth, thick] (a5) to[out=-60,in=-120] (a6);

     \node[] at (4.8,0.5) {\textcolor{cyan}{\tiny$1$}}; 
    
    
     \node[] at (5.2,-0.4) {\textcolor{cyan}{\tiny$1$}}; 

\end{scope}


\begin{scope}[scale=0.9, xshift=200, yshift=0]
	\vertex[fill,label=below:\footnotesize{$1$}](a1) at (1,0) {};
	\vertex[fill,label=below:\footnotesize{$2$}](a2) at (2,0) {};
	\vertex[fill,label=below:\footnotesize{$3$}](a3) at (3,0) {};
	\vertex[fill,label=below:\footnotesize{$4$}](a4) at (4,0) {};
	\vertex[fill,label=below:\footnotesize{$5$}](a5) at (5,0) {};
	\vertex[fill,label=below:\footnotesize{$6$}](a6) at (6,0) {};
	
	\draw[-stealth, thick] (a2)--(a3);
	\draw[-stealth, thick] (a3)--(a4);
	\draw[-stealth, thick] (a4)--(a5);
	\draw[-stealth, thick] (a5)--(a6);
	\draw[-stealth, thick] (a1) to[out=60,in=120] (a2);

    \node[] at (1.8,0.4) {\textcolor{cyan}{\tiny$1$}}; 
    
    \node[] at (2.5,0.15) {\textcolor{cyan}{\tiny$2$}}; 
    \node[] at (3.5,0.15) {\textcolor{cyan}{\tiny$2$}}; 
    \node[] at (4.5,0.15) {\textcolor{cyan}{\tiny$2$}}; 
    \node[] at (5.5,0.15) {\textcolor{cyan}{\tiny$2$}}; 
    

\end{scope}

\begin{scope}[scale=0.9, xshift=200, yshift=-40]
	\vertex[fill,label=below:\footnotesize{$1$}](a1) at (1,0) {};
	\vertex[fill,label=below:\footnotesize{$2$}](a2) at (2,0) {};
	\vertex[fill,label=below:\footnotesize{$3$}](a3) at (3,0) {};
	\vertex[fill,label=below:\footnotesize{$4$}](a4) at (4,0) {};
	\vertex[fill,label=below:\footnotesize{$5$}](a5) at (5,0) {};
	\vertex[fill,label=below:\footnotesize{$6$}](a6) at (6,0) {};
	
	\draw[-stealth, thick] (a3)--(a4);
	\draw[-stealth, thick] (a4)--(a5);
	\draw[-stealth, thick] (a5)--(a6);
 	\draw[-stealth, thick] (a1) to[out=60,in=120] (a3);

    \node[] at (2.8,0.5) {\textcolor{cyan}{\tiny$1$}}; 
    
    \node[] at (3.5,0.15) {\textcolor{cyan}{\tiny$2$}}; 
    \node[] at (4.5,0.15) {\textcolor{cyan}{\tiny$2$}}; 
    \node[] at (5.5,0.15) {\textcolor{cyan}{\tiny$2$}}; 
    

\end{scope}

\begin{scope}[scale=0.9, xshift=200, yshift=-90]
	\vertex[fill,label=below:\footnotesize{$1$}](a1) at (1,0) {};
	\vertex[fill,label=below:\footnotesize{$2$}](a2) at (2,0) {};
	\vertex[fill,label=below:\footnotesize{$3$}](a3) at (3,0) {};
	\vertex[fill,label=below:\footnotesize{$4$}](a4) at (4,0) {};
	\vertex[fill,label=below:\footnotesize{$5$}](a5) at (5,0) {};
	\vertex[fill,label=below:\footnotesize{$6$}](a6) at (6,0) {};
	
	\draw[-stealth, thick] (a4)--(a5);
	\draw[-stealth, thick] (a5)--(a6);
 	\draw[-stealth, thick] (a1) to[out=60,in=120] (a4);

     \node[] at (3.8,0.5) {\textcolor{cyan}{\tiny$1$}}; 
    
    \node[] at (4.5,0.15) {\textcolor{cyan}{\tiny$2$}}; 
    \node[] at (5.5,0.15) {\textcolor{cyan}{\tiny$2$}}; 
    

\end{scope}

\begin{scope}[scale=0.9, xshift=200, yshift=-140]
	\vertex[fill,label=below:\footnotesize{$1$}](a1) at (1,0) {};
	\vertex[fill,label=below:\footnotesize{$2$}](a2) at (2,0) {};
	\vertex[fill,label=below:\footnotesize{$3$}](a3) at (3,0) {};
	\vertex[fill,label=below:\footnotesize{$4$}](a4) at (4,0) {};
	\vertex[fill,label=below:\footnotesize{$5$}](a5) at (5,0) {};
	\vertex[fill,label=below:\footnotesize{$6$}](a6) at (6,0) {};
	
	\draw[-stealth, thick] (a5)--(a6);
 	\draw[-stealth, thick] (a1) to[out=60,in=120] (a5);

     \node[] at (4.8,0.5) {\textcolor{cyan}{\tiny$1$}}; 
    
    \node[] at (5.5,0.15) {\textcolor{cyan}{\tiny$2$}}; 
    

\end{scope}

\end{tikzpicture}

\end{center}
\caption{Given the length framing of the complete contraction of the graph \(\car(8)\) described in Example~\ref{ex:car8framing}, the above routes form a maximal clique. The routes in the left column are the exceptional routes for this framing. }
\label{fig:exceptional routes of contracted car(8)}
\end{figure}

The following theorem~\cite[Theorems 1 and 2]{DKK12} shows that the cliques in $[G,F]$ have particularly nice geometric properties.

\begin{theorem}[Danilov, Karzanov, Koshevoy~\cite{DKK12}]\label{thm:DKKtriangulation}
Let $G$ be a DAG with framing $F$.
The set of cliques for $G$ with respect to $F$ forms a regular unimodular triangulation of $\calF_+(G)$ that restricts to a regular unimodular triangulation of $\calF_1(G)$.
\end{theorem}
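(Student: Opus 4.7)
The plan is to establish the triangulation via a single geometric operation---\emph{uncrossing}---and then upgrade it to a regular unimodular triangulation using an explicit height function together with total unimodularity. Since coherence is a pairwise relation (Definition~\ref{def:coherent}), the set of cliques automatically forms a flag simplicial complex $\Delta_F$ on the vertex set $\calP(G)$, so only the geometric content remains. The core tool is the identity that, if routes $P, Q$ meet at an inner vertex $v$, then the \emph{uncrossed} pair $P' = Pv \cdot vQ$ and $Q' = Qv \cdot vP$ satisfies $v_P + v_Q = v_{P'} + v_{Q'}$. This identity lets me prove that $\calF_+(G)$ is covered by the geometric simplices of $\Delta_F$: starting from any nonnegative expansion $f = \sum_R \lambda_R v_R$ (which exists since the $v_R$ span $\calF(G)$ by Proposition~\ref{prop:flowbasics}), if two routes in the support conflict at some $v$, transfer $\min(\lambda_P, \lambda_Q)$ mass to the uncrossed pair; a suitable monovariant based on the $F$-orderings at each inner vertex strictly decreases, so the process terminates at a clique support.

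For regularity, I would construct a height function $h : \calP(G) \to \R$ that decomposes across inner vertices. For each inner vertex $v$ with framing orderings $e_1 \prec \cdots \prec e_p$ on $\mathrm{in}(v)$ and $e'_1 \prec \cdots \prec e'_q$ on $\mathrm{out}(v)$, set $w_v(e_i, e'_j) = -ij$, and let
\[
h(R) = \sum_{v \text{ inner on } R} w_v\bigl(e_{\mathrm{in}}(R,v),\, e_{\mathrm{out}}(R,v)\bigr),
\]
where $e_{\mathrm{in}}(R,v)$ and $e_{\mathrm{out}}(R,v)$ are the edges of $R$ at $v$. If $P, Q$ conflict at $v$ with local indices $i < k$ (in) and $j < \ell$ (out), a one-line calculation gives
\[
(h(P) + h(Q)) - (h(P') + h(Q')) = (k-i)(\ell-j) > 0,
\]
so the lifted points $(v_P, h(P))$ and $(v_Q, h(Q))$ never jointly lie on a lower face of the lifted polytope in $\R^{|E|+1}$. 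Combined with the covering by cliques, this identifies the lower envelope of the lift with the geometric realization of $\Delta_F$, proving regularity of the induced triangulation of $\calF_+(G)$.

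For unimodularity, Proposition~\ref{prop:flowbasics} shows that $\calF(G)$ is cut out by the signed incidence equations, whose matrix is totally unimodular. Any maximal clique $\calC$ must have $|\calC| = \dim \calF(G) = |E| - \#\{\text{inner vertices}\}$, and the vectors $\{v_R : R \in \calC\}$ are linearly independent by nondegeneracy of the simplex; a standard network-matrix argument then shows they form a $\Z$-basis of $\calF(G) \cap \Z^{|E|}$, so $\cone\{v_R : R \in \calC\}$ is unimodular. Finally, every route carries unit flow out of its source, so the affine hyperplane $H = \bigl\{f : \sum_{v \in \sources(G),\, e \in \mathrm{out}(v)} \flow(e) = 1\bigr\}$ meets each ray $\R_{\ge 0} v_R$ in the single point $v_R$, and the cone triangulation restricts to a regular unimodular triangulation of $\calF_1(G) = H \cap \calF_+(G)$ with the same clique combinatorics. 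The main obstacle I anticipate is the verification that the locally supermodular weights $w_v$ cut out exactly $\Delta_F$ globally---in particular, that no spurious non-coherent simplex sneaks onto the lower envelope due to cancellations among several inner vertices shared by many routes---which requires an inductive uncrossing argument at the level of lifted flows that extends the uncrossing step from the first paragraph.
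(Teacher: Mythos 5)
The paper itself offers no proof of Theorem~\ref{thm:DKKtriangulation} --- it is imported from~\cite{DKK12} --- so I am judging your argument on its own terms, and it has a genuine gap in the regularity step. Your height $h(R)=\sum_{v}w_v\bigl(e_{\mathrm{in}}(R,v),e_{\mathrm{out}}(R,v)\bigr)$ is additive over inner vertices in a function of the two edges of $R$ at that vertex, and your separation inequality $(h(P)+h(Q))-(h(P')+h(Q'))=(k-i)(\ell-j)>0$ tacitly assumes that the conflicting routes use \emph{different} in-edges and \emph{different} out-edges at the vertex where you uncross. But conflict is defined through the induced orders on $\In(v)$ and $\Out(v)$ (Definition~\ref{def:inoutpathorders}), which compare paths at their first point of divergence, so two routes can conflict at $v$ while agreeing on both of their edges at $v$. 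The paper's Example~\ref{ex:car8coherent} exhibits exactly this: in the contracted $\car(8)$ with the length framing, $P=13456$ and $Q=12346$ conflict at $3$ and at $4$ yet share the edge $(3,4)$. Uncrossing at $3$ produces the \emph{coherent} pair $P'=1346$, $Q'=123456$, and the pairs $\{P,Q\}$ and $\{P',Q'\}$ contribute the identical multiset of triples $(v;e_{\mathrm{in}},e_{\mathrm{out}})$; hence $v_P+v_Q=v_{P'}+v_{Q'}$ and $h(P)+h(Q)=h(P')+h(Q')$, not only for $w_v(i,j)=-ij$ but for \emph{every} height of this vertex-local form. Now $\{P',Q'\}$, being a clique, must be a face of $\DKK(G,F)$; if $h$ induced that triangulation, an affine functional $\varphi$ exposing this face would satisfy $\varphi(v_P)+\varphi(v_Q)=\varphi(v_{P'})+\varphi(v_{Q'})=h(P)+h(Q)$ while $\varphi(v_R)\le h(R)$ pointwise, forcing $\varphi(v_P)=h(P)$ and $\varphi(v_Q)=h(Q)$ and hence placing the conflicting pair $P,Q$ in a common cell --- a contradiction. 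So the subdivision induced by $h$ is strictly coarser than $\DKK(G,F)$, and the obstacle you flag at the end is not a verification detail: it defeats the construction. A correct certificate must see more than the two edges at each inner vertex, e.g.\ the positions of $Rv$ and $vR$ in the induced orders on $\In(v)$ and $\Out(v)$ rather than on $\mathrm{in}(v)$ and $\mathrm{out}(v)$.

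The same example weakens the covering step as written: any ``monovariant based on the $F$-orderings at each inner vertex'' that is additive in the above sense is invariant under this uncrossing, so termination is not established (and uncrossing may create new conflicts with third routes in the support; the clean fix is to minimize a \emph{correct} global height over the compact set of nonnegative representations of $f$ and show a minimizer is clique-supported). The pieces that do work are the uncrossing identity $v_P+v_Q=v_{P'}+v_{Q'}$, the observation that the cliques form a flag complex, the unimodularity argument via total unimodularity of the incidence matrix, and the restriction to $\calF_1(G)$ along the hyperplane of strength-one flows.
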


\begin{definition}\label{def:DKKtriangulation}
The triangulation from Theorem~\ref{thm:DKKtriangulation} is called the \emph{DKK triangulation corresponding to F} and denote it $\DKK(G,F)$.
\end{definition}

Given a framed graph $[G,F]$, let $\calE$ denote the set of exceptional routes in $G$.
Since $\calE$ is contained in every facet of $\DKK(G,F)$, this implies that $\DKK(G,F)$ is obtained as the join of $\calE$ with the link of $\calE$ in $\DKK(G,F)$.
Hence, it is of interest to ask what happens to $\calF_+(G)$ when we quotient out by the linear span of $\calE$.

\begin{definition}\label{def:reduced}
Given a framed graph $[G,F]$ with exceptional set $\calE$, let $\calF(G)_{red}=\calF_{red}:=\calF(G)/\mathrm{span}_{\R}(\calE)$ be the \emph{reduced space} of $\calF$, and let $p:\calF\to\calF_{red}$ denote the projection map.
Let $\calF_{+,red}:=p(\calF_{+})$, which we call the \emph{reduced cone}.
Let $\DKK(G,F)_{red}$ denote the \emph{reduced fan} formed by the set of simplicial cones $\{p(C):C \in \DKK(G,F)\}$.
\end{definition}

A question of interest to Danilov, Karzanov, and Koshevoy, motivated by a conjecture of Petersen, Pylyavskyy, and Speyer~\cite{PPS}, is to determine when the fan $\DKK(G,F)_{red}$ is complete, meaning that the union of simplicial cones in $\DKK(G,F)_{red}$ is equal to $\calF_{red}$.
A combinatorial characterization of framings that yield complete reduced fans is the following.

\begin{definition}\label{def:ample}
Let $[G,F]$ be a framed graph with exceptional set $\calE$.
If $\calE$ is not contained in any facet of $\calF_{+}$, then we say both $\calE$ and $F$ are \emph{ample}.
\end{definition}

The following theorem~\cite[Proposition~5]{DKK12} gives both a geometric and a combinatorial characterization of ample framings.

\begin{theorem}[Danilov, Karzanov, Koshevoy~\cite{DKK12}]\label{thm:DKKample}
Given a framed graph $[G,F]$, the following conditions are equivalent:
\begin{enumerate}
    \item $F$ is ample,
    \item $\calF(G)_{+,red}=\calF_{red}$, i.e., $\calF(G)_{+,red}$ is a complete fan,
    \item each non-idle edge belongs to an exceptional route for $F$.
\end{enumerate}
\end{theorem}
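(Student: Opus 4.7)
The plan is to prove the three-way equivalence by establishing (1)$\Leftrightarrow$(3) and (2)$\Leftrightarrow$(3), in both cases using Proposition~\ref{prop:contractidle} to translate between facets of $\calF_+(G)$ and non-idle edges of $G$.

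For (1)$\Leftrightarrow$(3): By Proposition~\ref{prop:contractidle} the facets of $\calF_+(G)$ are in bijection with non-idle edges, the facet for a non-idle edge $e$ being $\{f\in\calF_+: f(e)=0\}$. An exceptional route $R$ lies in this facet exactly when $v_R(e)=0$, i.e., when $R$ avoids $e$. Hence $\calE$ is contained in the facet corresponding to $e$ iff no exceptional route uses $e$. Contrapositively, $F$ is ample iff every non-idle edge is covered by some exceptional route, which is (3).

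For (2)$\Leftrightarrow$(3): Since $p:\calF\to\calF_{red}$ has kernel $\mathrm{span}_{\R}(\calE)$, the condition $p(\calF_+)=\calF_{red}$ is equivalent to $\calF=\calF_++\mathrm{span}_{\R}(\calE)$. To get (3)$\Rightarrow$(2), I would choose, for each non-idle edge $e$, an exceptional route $R_e$ containing $e$ and set
\[
h \;=\; \sum_{e\text{ non-idle}} v_{R_e} \;\in\; \mathrm{span}_{\R}(\calE).
\]
Then $h(e)>0$ for every non-idle edge $e$. For an arbitrary $f\in\calF$, a sufficiently large $\lambda>0$ makes $(f+\lambda h)(e)\geq 0$ for all non-idle edges simultaneously, and by Proposition~\ref{prop:contractidle} these are precisely the inequalities cutting out $\calF_+$, so $f+\lambda h\in\calF_+$ and $f\in\calF_++\mathrm{span}_{\R}(\calE)$. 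For the converse (2)$\Rightarrow$(3), I would argue by contrapositive: if some non-idle edge $e_0$ lies in no exceptional route, then $g(e_0)=0$ for every $g\in\mathrm{span}_{\R}(\calE)$; since $e_0$ indexes an actual facet, the coordinate $x_{e_0}$ is not identically zero on $\calF$, so there exists a flow $f\in\calF$ with $f(e_0)<0$, and for any $g\in\mathrm{span}_{\R}(\calE)$ we still have $(f+g)(e_0)=f(e_0)<0$, forcing $f\notin\calF_++\mathrm{span}_{\R}(\calE)$, contradicting (2).

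The core of the argument is elementary linear algebra once Proposition~\ref{prop:contractidle} is in hand; the main obstacle is bookkeeping around idle edges. One must be careful that the nonnegativity conditions truly defining $\calF_+$ reduce to the non-idle ones (so that the sign of $h$ on idle edges is irrelevant in (3)$\Rightarrow$(2)), and that a facet at a non-idle edge forces the $x_{e_0}$ coordinate to attain negative values on $\calF$ (which is what gives the witness flow in (2)$\Rightarrow$(3)). Both of these points are direct consequences of Proposition~\ref{prop:contractidle} and the fact that $\calF_+$ is full-dimensional in $\calF$, so no additional combinatorial input is required.
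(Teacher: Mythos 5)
The paper does not prove this statement --- it is imported verbatim from Danilov--Karzanov--Koshevoy as \cite[Proposition 5]{DKK12} --- so there is no in-paper argument to compare against; I can only assess your proof on its own terms, and it is correct and self-contained. Reducing both (1) and (2) to the facet description of $\calF_+$ via Proposition~\ref{prop:contractidle} is the right move: ampleness unpacks to ``no facet $\{x_e=0\}$ contains all of $\calE$,'' which after noting $v_R\in\{x_e=0\}$ iff $e\notin R$ is literally (3); and surjectivity of $p$ on $\calF_+$ unpacks to $\calF=\calF_+ + \mathrm{span}_{\R}(\calE)$, which your vector $h$ (strictly positive on every non-idle edge) establishes in one direction and your witness flow with $f(e_0)<0$ refutes in the other. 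The two bookkeeping points you flag are indeed the only places where care is needed, and both are discharged correctly: since $\calF_+$ is full-dimensional in $\calF$ (the route vectors span $\calF$ by Proposition~\ref{prop:flowbasics}), it equals the intersection of its facet-defining halfspaces, so the inequalities $x_e\geq 0$ at idle edges are redundant and only the non-idle coordinates need to be made nonnegative by adding $\lambda h$; and a facet-defining coordinate $x_{e_0}$ cannot vanish identically on $\calF$, which supplies the flow with $f(e_0)<0$. One could quibble that Proposition~\ref{prop:contractidle} speaks of edges of a complete contraction rather than non-idle edges of $G$, but a short degree count shows contracting an idle edge never renders a previously non-idle edge idle, so the two sets coincide and your identification is safe.
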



\section{Ample Framings}\label{sec:ample}

In this section we investigate the class of DAGs that admit ample framings.
We begin by considering DAGs that do not have idle edges.

\subsection{Full Framed Graphs}

DAGs with an ample framing and no idle edges have a restricted combinatorial structure, as the following lemma demonstrates.

\begin{lemma}\label{lem:amplemeansfull}
For a DAG $G$ with ample framing $F$ and no idle edges, every inner vertex $v$ has $\indeg(v)=2=\outdeg(v)$.
\end{lemma}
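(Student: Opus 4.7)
The plan is a proof by contradiction that leverages part (3) of Theorem~\ref{thm:DKKample}: for an ample framing, every non-idle edge lies on some exceptional route. Because $G$ has no idle edges, every inner vertex already satisfies $\indeg(v)\ge 2$ and $\outdeg(v)\ge 2$, so only the upper bounds require proof. The definitions of framing and coherence are completely symmetric under the swap of $\mathrm{in}$ and $\mathrm{out}$, so I will prove $\indeg(v)\le 2$ in detail and invoke the verbatim dual argument for $\outdeg(v)\le 2$.

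Suppose toward contradiction that $\indeg(v)\ge 3$, and pick any three incoming edges $e_1\prec_{F,\mathrm{in}(v)} e_2\prec_{F,\mathrm{in}(v)} e_3$. By ampleness there is an exceptional route $R$ containing the middle edge $e_2$; let $f\in\mathrm{out}(v)$ denote the outgoing edge of $R$ at $v$. Since $\outdeg(v)\ge 2$, choose a second outgoing edge $f'\neq f$. The strategy is to build a route $Q$ through $v$ whose incoming and outgoing edges at $v$, paired against $e_2$ and $f$, reverse one of the two local orders, producing a conflict at $v$ and contradicting the exceptionality of $R$.

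The necessary routes always exist because $G$ is a finite DAG: every edge extends backward to a source and forward to a sink, so for any pair of an incoming edge and an outgoing edge at $v$ there is a route through $v$ using exactly that pair. If $f\prec_{F,\mathrm{out}(v)} f'$, choose $Q$ to use $e_1$ and $f'$ at $v$; since $Rv$ and $Qv$ differ already in their last edge into $v$, the $\In(v)$ comparison reduces to $e_1\prec e_2$, yielding $Qv\prec_{\In(v)} Rv$, and similarly $vR$ and $vQ$ differ in their first edge out of $v$, yielding $vR\prec_{\Out(v)} vQ$, so $Q$ and $R$ are in conflict at $v$. If instead $f'\prec f$, choose $Q$ to use $e_3$ and $f'$, producing the symmetric conflict $Rv\prec_{\In(v)} Qv$ and $vQ\prec_{\Out(v)} vR$. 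In either subcase $R$ fails to be exceptional, the desired contradiction.

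The only subtlety worth flagging is the correct reduction of the $\In(v)$ and $\Out(v)$ path comparisons to the local edge comparisons at $v$; this is immediate in both subcases because the relevant last and first edges at $v$ are chosen distinct by construction, so $v$ itself is the divergence vertex of the two paths. I do not anticipate any further obstacle: the argument uses only the combinatorial characterization of ampleness and the elementary fact that in a finite DAG every edge can be completed to a route.
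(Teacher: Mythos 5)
Your proof is correct and follows essentially the same strategy as the paper's: invoke Theorem~\ref{thm:DKKample}(3) to get an exceptional route through the middle edge on the high-degree side of $v$, then construct a conflicting route to contradict exceptionality. The only differences are cosmetic — the paper takes out-degree $\ge 3$ as the WLOG case and splits on which neighbor $e_{i\pm1}$ exists, whereas you take in-degree $\ge 3$ and split on the relative order of the two outgoing edges — so the arguments are dual forms of the same proof.
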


\begin{proof}
Because there are no idle edges in $G$, the only condition that is excluded here is, without loss of generality, when an inner vertex $v$ has in-degree at least two and out-degree at least three.
In this case, let $\mathrm{in}(v)=\{e_1\prec e_2\prec \cdots \prec e_\ell\}$ and let $\mathrm{out}(v)=\{f_1\prec f_2\prec \cdots \prec f_k\}$.
By assumption, $f_2$ is not idle, and thus $f_2$ must lie on an exceptional route $R$ by Theorem~\ref{thm:DKKample}.
Suppose edge $e_i$ also lies on $R$; note that one or both of $e_{i-1}$ or $e_{i+1}$ exist.
If $e_{i+1}$ exists, then create a route $R'$ that passes through $e_{i+1}$ and $f_1$ by extending $R'$ back from $e_{i+1}$ to a source and forward from $f_1$ to a sink.
In this case, $R$ is in conflict with $R'$ at $v$.
If $e_{i-1}$ exists, then a similar argument using $f_3$ will produce a route $R'$ that is in conflict with $R$ at $v$.
Note that $f_3$ exists since $k\geq 3$ by assumption.
In either case, we arrive at a contradiction to the fact that $R$ is exceptional, and hence to the assumption regarding the in-degree and out-degree of $v$.
\end{proof}

Lemma~\ref{lem:amplemeansfull} motivates the following definition.

\begin{definition}\label{def:fullgraph}
Let $G$ be a DAG.
For an inner vertex $v$, we say $v$ is \emph{full} if $\indeg(v)=2=\outdeg(v)$.
If every inner vertex of $G$ is full, then we say $G$ is \emph{full}.
\end{definition}

Thus, Lemma~\ref{lem:amplemeansfull} shows that every DAG with no idle edges that admits an ample framing must be full. An example of a full DAG is given in Figure~\ref{fig:nu-graph}.

\begin{lemma}\label{lem:abovebelowexceptionals}
Let $R$ be an exceptional route in a full DAG $G$ with ample framing $F$, and let $I(R)$ be the set of inner vertices of $G$ on $R$.
Then either $R$ passes through every vertex in $I(R)$ on the largest edges in the linear orders for $F$ or else $R$ passes through every vertex in $I(R)$ on the smallest edges in the linear orders for $F$.
\end{lemma}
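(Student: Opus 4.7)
My plan is to split the lemma into a local claim and a global claim. The local claim asserts that at each inner vertex $v \in I(R)$, the two edges of $R$ at $v$ are either both the smallest in their respective orders $\mathrm{in}(v),\mathrm{out}(v)$ or both the largest. The global claim asserts that the small/large choice from the local claim is the same at every vertex of $I(R)$. I will prove each by contradiction: in each case I will exhibit a route $R'$ that conflicts with $R$ at some inner vertex, contradicting exceptionality of $R$.

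For the local claim I will mimic the argument of Lemma~\ref{lem:amplemeansfull}. Writing $\mathrm{in}(v)=\{e_1\prec e_2\}$ and $\mathrm{out}(v)=\{f_1\prec f_2\}$, suppose for contradiction that $R$ uses the pair $(e_1,f_2)$ at $v$; the case $(e_2,f_1)$ is symmetric. Form $R'$ by extending $e_2$ back to a source and $f_1$ forward to a sink, both possible since $G$ is a DAG. Then $Rv\prec_{\In(v)}R'v$ (their last edges are $e_1\prec e_2$) and $vR'\prec_{\Out(v)}vR$ (their first edges are $f_1\prec f_2$), so $R$ and $R'$ conflict at $v$, a contradiction. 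Hence at each $v\in I(R)$ the pair used by $R$ is either $(e_1,f_1)$ or $(e_2,f_2)$.

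For the global claim, suppose two vertices of $I(R)$ receive different choices. Since the inner vertices of $R$ lie in a single linear order along $R$, there exist consecutive inner vertices $v_a,v_{a+1}\in I(R)$ on $R$ with different states, joined by the edge $e=(v_a,v_{a+1})$ on $R$. I will handle the case where $R$ is small at $v_a$ and large at $v_{a+1}$; the other is analogous. Then $e$ is the smallest edge of $\mathrm{out}(v_a)$ and the largest of $\mathrm{in}(v_{a+1})$. Writing $\mathrm{in}(v_a)=\{\alpha_1\prec\alpha_2\}$ and $\mathrm{out}(v_{a+1})=\{\beta_1\prec\beta_2\}$, I construct $R'$ that enters $v_a$ via $\alpha_2$ (extended backward to a source), traverses $e$ to $v_{a+1}$, and exits via $\beta_1$ (extended forward to a sink). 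At $v_a$, the path $Rv_a$ ends with $\alpha_1$ and $R'v_a$ with $\alpha_2$, so $Rv_a\prec_{\In(v_a)}R'v_a$. The paths $v_aR$ and $v_aR'$ agree on the edge $e$ and first diverge at $v_{a+1}$, where $R$ uses $\beta_2$ and $R'$ uses $\beta_1$; thus $v_aR'\prec_{\Out(v_a)}v_aR$. Therefore $R$ and $R'$ conflict at $v_a$, contradicting exceptionality of $R$.

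I expect the main obstacle to lie in the global step, specifically the choice of $R'$ and the verification that the conflict is detected at $v_a$ rather than at $v_{a+1}$. The essential feature is that the information distinguishing $R$ from $R'$ on the outgoing side of $v_a$ only becomes visible at $v_{a+1}$, one edge downstream, so the comparison of $v_aR$ and $v_aR'$ must trace the shared edge $e$ before exploiting the different outgoing choices at $v_{a+1}$. Existence of $R'$ (by extending edges backward and forward in the DAG), handling the symmetric cases, and reducing non-consecutive disagreement to the consecutive case via a pigeonhole on the sequence of inner vertices of $R$ are all routine.
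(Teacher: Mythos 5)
Your proof is correct and follows essentially the same strategy as the paper's: both first rule out a ``mixed'' choice at an inner vertex, then derive a conflict by constructing a route $R'$ that agrees with $R$ on a segment between two points where it swaps to the opposite edges, with the conflict detected at the upstream vertex because the $\Out$-comparison traces the shared segment before diverging. The only difference is cosmetic --- the paper flips at the first inner vertex and at the first vertex of the opposite type (sharing a possibly long segment of $R$), whereas you flip at two consecutive disagreeing vertices sharing a single edge.
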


\begin{proof}
Let $R$ be an exceptional route in $G$.
Let $v$ be the first inner vertex reached by $R$ after leaving a source, and let $\mathrm{in}(v)=\{e_1\prec e_2\}$ and $\mathrm{out}(v)=\{f_1\prec f_2\}$.
It is immediate that $R$ cannot contain $e_1$ and $f_2$, as in this case a conflicting route to $R$ can be constructed using $e_2$ and $f_1$, and vice versa.
Without loss of generality, suppose that $R$ contains $e_1$ and $f_1$.
Suppose that $w$ is an inner vertex on $R$ such that $\mathrm{in}(w)=\{e'_1\prec e'_2\}$ and $\mathrm{out}(w)=\{f'_1\prec f'_2\}$ where $R$ contains $e'_2$ and $f'_2$; suppose further that $w$ is the first such vertex on $R$ reached after $R$ leaves $v$.
Construct a new route $R'$ that starts with $e_2$ then follows $R$ to $w$ at which point it leaves on $f'_1$ and proceeds until terminating at a sink.
Then $R$ and $R'$ are in conflict at both $v$ and $w$, contradicting the assumption that $R$ is exceptional.
Hence, no such $w$ exists.
\end{proof}

\begin{theorem}\label{thm:uniqueexceptional}
If $G$ is a full DAG with ample framing $F$, then every edge in $G$ lies on a unique exceptional route.
\end{theorem}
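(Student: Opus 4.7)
The plan is to combine Lemma~\ref{lem:abovebelowexceptionals} with Theorem~\ref{thm:DKKample}. Existence is immediate: since $G$ is full, every inner vertex has in-degree and out-degree equal to $2$, so $G$ contains no idle edges. By the ampleness of $F$ and Theorem~\ref{thm:DKKample}(3), every edge of $G$ therefore lies on at least one exceptional route.

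For uniqueness, I would fix an edge $e=(u,v)$ and suppose $R$ is an exceptional route through $e$. By Lemma~\ref{lem:abovebelowexceptionals}, $R$ is of exactly one of two \emph{types}: at every inner vertex $w\in I(R)$, $R$ either enters and leaves $w$ via the largest edges in $\mathrm{in}(w)$ and $\mathrm{out}(w)$ (call this ``top type''), or via the smallest edges (``bottom type''). The first step is to show that this type is forced by the position of $e$ at its endpoints. If $u$ is an inner vertex with $\mathrm{out}(u)=\{f_1\prec f_2\}$, then since $R$ leaves $u$ via $e\in\{f_1,f_2\}$, the type must be top when $e=f_2$ and bottom when $e=f_1$. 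Analogously, if $v$ is inner with $\mathrm{in}(v)=\{g_1\prec g_2\}$, the type is determined by whether $e=g_2$ (top) or $e=g_1$ (bottom). If both $u$ and $v$ are non-inner, i.e., $u$ is a source and $v$ is a sink, then $R=(u,v)$ and uniqueness is trivial. When $u$ and $v$ are both inner, the two constraints are mutually consistent because some exceptional route through $e$ exists by the first paragraph.

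Once the type of $R$ is fixed, the entire route is recovered by tracing extreme edges. Tracing forward from $v$: if $v$ is inner, $R$ leaves $v$ via the unique extreme outgoing edge of the correct type (unique because $\mathrm{outdeg}(v)=2$), which takes $R$ to a well-defined next vertex; iterating terminates at a sink since $G$ is a finite DAG. Tracing backward from $u$: if $u$ is inner, $R$ enters $u$ via the unique extreme incoming edge of the correct type (unique because $\mathrm{indeg}(u)=2$), iterating until a source is reached. Both traces are uniquely determined by the type, so $R$ itself is uniquely determined by $e$.

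The main technical content has already been absorbed into Lemma~\ref{lem:abovebelowexceptionals}; what remains is essentially bookkeeping. The only subtle point is the small case analysis at the endpoints of $e$, which is routine but should be written out carefully to handle the possibilities that $u$ or $v$ is a source, sink, or inner vertex.
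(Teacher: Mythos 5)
Your proposal is correct and follows essentially the same route as the paper: existence from Theorem~\ref{thm:DKKample}, and uniqueness from Lemma~\ref{lem:abovebelowexceptionals} by noting that the ``type'' (first/smallest versus second/largest) is forced by $e$ and then determines the whole route by tracing extreme edges through the degree-$(2,2)$ inner vertices. The paper's proof is just a terser version of the same argument, leaving the endpoint bookkeeping implicit.
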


\begin{proof}
Let $e$ be an edge in $G$.
By Theorem~\ref{thm:DKKample}, since $F$ is an ample framing, $e$ lies on some exceptional route $R$.
By Lemma~\ref{lem:abovebelowexceptionals}, either $e$ is first in the linear order for both of its vertices, or it is second in the linear order for both of its vertices.
Without loss of generality, assume $e$ is first in the order.
Any exceptional route that contains $e$ must pass through every inner vertex using the first edges in the linear orders associated to that vertex.
But, this constraint uniquely determines $R$.
\end{proof}

\begin{lemma}\label{lem:indegoutdeg}
If $G$ is a DAG satisfying $\indeg(v)=\outdeg(v)$ for every inner vertex $v$, then 
\[
\sum_{v\text{ a source of }G}\outdeg(v)=\sum_{w\text{ a sink of }G}\indeg(w) \, .
\]
\end{lemma}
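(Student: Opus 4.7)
The plan is to use a standard double-counting argument on the edges of $G$. Every edge contributes exactly once to $\sum_{v \in V(G)} \indeg(v)$ and exactly once to $\sum_{v \in V(G)} \outdeg(v)$, so these two total-degree sums are equal, both counting $|E(G)|$.

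The next step is to split each total-degree sum according to the partition of $V(G)$ into sources, sinks, and inner vertices. Sources contribute $0$ to the total in-degree (since $\mathrm{in}(v) = \emptyset$), and sinks contribute $0$ to the total out-degree. This lets me write
\[
\sum_{w \text{ a sink}} \indeg(w) + \sum_{v \text{ inner}} \indeg(v) \;=\; \sum_{v \text{ a source}} \outdeg(v) + \sum_{v \text{ inner}} \outdeg(v).
\]
Now I invoke the hypothesis that $\indeg(v) = \outdeg(v)$ for every inner vertex $v$, so the two inner-vertex sums agree and cancel, yielding the desired equality.

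There is no real obstacle here; the only thing to be careful about is making sure the partition of $V(G)$ into sources, sinks, and inner vertices is genuinely a partition (which follows from the definitions in Section~\ref{subsec:flowsandflowpolytopes}: a source has $\mathrm{in}(v) = \emptyset$, a sink has $\mathrm{out}(v) = \emptyset$, and by convention for a DAG no vertex is both). I would present the proof in two or three sentences, with the displayed equation above as the main line of computation.
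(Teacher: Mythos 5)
Your proof is correct and is essentially the same argument as the paper's: the paper writes the double count as $0=\sum_{(v,w)\in E(G)}(-1+1)$ and cancels the $\pm 1$ contributions at inner vertices using $\indeg(v)=\outdeg(v)$, which is exactly your identity with one side moved across the equals sign. (A negligible quibble: an isolated vertex is both a source and a sink under the paper's definitions, so the three classes need not be disjoint, but such a vertex contributes zero to every sum, so nothing breaks.)
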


\begin{proof}
The result follows from the following observation, which arises by canceling $1$'s and $-1$'s at inner vertices:
\[
0 = \sum_{(v,w)\,\in\, E}(-1+1) = \sum_{(v,w)\,:\, v \text{ a source}}(-1)+\sum_{(v,w)\,:\, w \text{ a sink}} 1 \, .
\]
\end{proof}

\begin{corollary}\label{cor:outdegreeexceptionalcount}
If $G$ is a full DAG with ample framing $F$, then the number of exceptional routes in $G$ is equal to
\begin{equation}\label{eq:sourcedeg}
\sum_{v\text{ a source of }G}\outdeg(v)=\sum_{w\text{ a sink of }G}\indeg(w) \, .
\end{equation}
Thus, the number of exceptional routes in an amply framed full DAG is independent of the framing.
\end{corollary}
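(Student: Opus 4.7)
The plan is to set up a bijection between the exceptional routes and the edges leaving sources of $G$, and then deduce the second equality from Lemma~\ref{lem:indegoutdeg}.

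First I would observe that a full DAG has no idle edges at all: idleness requires an inner vertex with in-degree or out-degree equal to $1$, which is excluded by the condition $\indeg(v)=2=\outdeg(v)$ at every inner vertex $v$. Combined with ampleness, Theorem~\ref{thm:DKKample}(3) ensures that every edge of $G$ lies on some exceptional route, and by Theorem~\ref{thm:uniqueexceptional} this exceptional route is unique.

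Next, define a map $\Phi$ from the set of exceptional routes of $G$ to the set of edges leaving a source, by sending each exceptional route $R$ to its first edge $\Phi(R)$. Every route begins at a source, so $\Phi$ is well-defined. To see $\Phi$ is injective, suppose exceptional routes $R_1$ and $R_2$ share the same first edge $e$; then both contain $e$, but Theorem~\ref{thm:uniqueexceptional} says $e$ lies on a unique exceptional route, so $R_1 = R_2$. For surjectivity, let $e=(v,u)$ be an arbitrary edge with $v$ a source. By the previous paragraph, $e$ lies on a unique exceptional route $R$, and since $v$ is a source, any route containing $e$ must begin at $v$ with the edge $e$; hence $\Phi(R) = e$. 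Therefore
\[
\#\{\text{exceptional routes in } G\} = \sum_{v \text{ a source of } G} \outdeg(v) \, .
\]

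Finally, because $G$ is full, every inner vertex satisfies $\indeg(v)=\outdeg(v)$, so Lemma~\ref{lem:indegoutdeg} yields the second equality in~\eqref{eq:sourcedeg}. Since the right-hand side of~\eqref{eq:sourcedeg} depends only on $G$, the count is independent of the ample framing $F$. The argument is essentially unobstructed: the only thing to check carefully is that each edge leaving a source is genuinely the first edge of the unique exceptional route containing it, which follows immediately from the fact that sources have no incoming edges. Theorems~\ref{thm:DKKample} and~\ref{thm:uniqueexceptional} do the substantive work.
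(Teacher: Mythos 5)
Your argument is correct and matches the paper's proof: the paper likewise obtains the second equality from Lemma~\ref{lem:indegoutdeg} and counts exceptional routes via the bijection with source edges given by Theorem~\ref{thm:uniqueexceptional}. You simply spell out the injectivity and surjectivity of that correspondence in more detail than the paper does.
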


\begin{proof}
The equality in~\eqref{eq:sourcedeg} follows from Lemma~\ref{lem:indegoutdeg}.
By Theorem~\ref{thm:uniqueexceptional}, each edge adjacent to a source in $G$ is contained in a unique exceptional route in $G$, and this correspondence is bijective.
Thus, the number of exceptional routes in $G$ is given by~\eqref{eq:sourcedeg}.
\end{proof}

Lemma~\ref{lem:amplemeansfull} shows that DAGs admitting ample framings must be full, Theorem~\ref{thm:uniqueexceptional} demonstrates that the exceptional routes are highly constrained in amply framed full DAGs, and Corollary~\ref{cor:outdegreeexceptionalcount} establishes that the number of exceptional routes for an ample framing is a function of the full DAG $G$ rather than the framing itself.
This naturally leads to the question of which collections of routes can form an exceptional set of routes in an ample framing of a full DAG.
The following definition and theorem answer this question.

\begin{definition}\label{def:adjacency}
Let $X$ be a set of routes in a DAG $G$ with framing $F$.
Define the \emph{adjacency graph} of $X$, $\Adj(G,X)$, to be the graph with vertex set $X$ where two routes $R,S\in X$ form an edge in $\Adj(G,X)$ if there exists a full vertex $v$ that lies on both $R$ and $S$.
\end{definition}

\begin{example}\label{ex:adjacencygraph}
Consider the graph $H$ from Figure~\ref{fig:nu-graph}, using the length framing given in Example~\ref{ex:car8framing}.
If $X=\{123456,136,146,1236\}$, then $\Adj(H,X)$ is given by the graph shown in Figure~\ref{fig:adjacencyexample}.
\end{example}

\begin{figure}
    \centering
\begin{tikzpicture}
\begin{scope}[scale=1, xshift=0, yshift=0]
	\vertex[fill,label=below:\footnotesize{$123456$}](a1) at (0,0) {};
	\vertex[fill,label=below:\footnotesize{$136$}](a2) at (2,0) {};
	\vertex[fill,label=above:\footnotesize{$1236$}](a3) at (0,2) {};
	\vertex[fill,label=above:\footnotesize{$146$}](a4) at (-2,2) {};
	\draw[thick] (a1)--(a2);
	\draw[thick] (a1)--(a3);
	\draw[thick] (a1)--(a4);
	\draw[thick] (a2)--(a3);
\end{scope}
\end{tikzpicture}
    \caption{The adjacency graph from Example~\ref{ex:adjacencygraph} with vertices consisting of the four non-exceptional routes listed in the right-hand column of Figure~\ref{fig:exceptional routes of contracted car(8)}.}
    \label{fig:adjacencyexample}
\end{figure}

\begin{theorem}\label{thm:ampleclassification}
Let $G$ be a full DAG and let $X$ be a set of routes in $G$.
Then there exists an ample framing $F$ with exceptional set $X$ if and only if every edge in $G$ is contained in a unique route in $X$ and $\Adj(G,X)$ is bipartite.
\end{theorem}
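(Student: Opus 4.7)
The plan is to handle the two directions of the biconditional separately, using Lemma~\ref{lem:abovebelowexceptionals} and Theorem~\ref{thm:uniqueexceptional} for the forward direction and constructing an explicit framing from the bipartition for the reverse direction. For the forward direction, suppose $F$ is an ample framing with exceptional set $X$. Theorem~\ref{thm:uniqueexceptional} immediately yields that every edge of $G$ lies on a unique route of $X$. To show $\Adj(G,X)$ is bipartite, I would invoke Lemma~\ref{lem:abovebelowexceptionals} to partition $X$ into \emph{top} routes (those using the largest edge at every inner vertex they traverse) and \emph{bottom} routes (those using the smallest); routes with no inner vertex qualify as either type and are isolated in $\Adj(G,X)$, so they can be assigned either color. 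If $R, S \in X$ share a full vertex $v$ and have the same color, they use the same largest (resp.\ smallest) incoming and outgoing edges at $v$, so uniqueness of the route through each edge forces $R = S$. Hence adjacent routes in $\Adj(G,X)$ must receive opposite colors, yielding a proper $2$-coloring.

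For the reverse direction, I would fix a $2$-coloring of $\Adj(G,X)$ with classes labeled \emph{top} and \emph{bottom}, and construct $F$ as follows. The key structural observation is that at each inner vertex $v$, since each of the four edges incident to $v$ lies on a unique route of $X$ and a route through $v$ must pair exactly one incoming edge with one outgoing edge, exactly two routes of $X$ pass through $v$; these two routes are adjacent in $\Adj(G,X)$ and therefore receive opposite colors. I would then define $F$ at $v$ so that the top route through $v$ uses the larger incoming and larger outgoing edge, while the bottom route uses the smaller. Because the color of each route is fixed globally, an edge $(u,v)$ is largest at $v$ if and only if it is largest at $u$, so the local definitions glue into a globally consistent framing.

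It then remains to verify $X = \calE$ and that $F$ is ample. For $R \in X$ colored top and any other route $S$ through an inner vertex $v$ on $R$, I would trace backwards from $v$ to the first vertex $w$ where $Rv$ and $Sv$ diverge; since $R$ is top at $w$, the edge of $Rv$ entering $w$ is the largest in $\mathrm{in}(w)$, giving $Sv \prec_{\In(v)} Rv$ by Definition~\ref{def:inoutpathorders}, which violates the first half of the conflict condition. The analogous analysis for bottom routes establishes $X \subseteq \calE$. Conversely, any exceptional route $T$ must be entirely top or entirely bottom by Lemma~\ref{lem:abovebelowexceptionals}, and following $T$ from its initial source edge (which belongs to a unique route of $X$ of matching color) while greedily choosing the largest or smallest outgoing edge at each inner vertex forces $T$ to coincide with that route, so $\calE \subseteq X$. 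Since $G$ is full it has no idle edges, and because every edge lies on a route in $X = \calE$, condition (3) of Theorem~\ref{thm:DKKample} certifies that $F$ is ample. I expect the main obstacle to be the conflict analysis via Definition~\ref{def:inoutpathorders}, since the orderings on $\In(v)$ and $\Out(v)$ are determined at the first point of divergence of two paths and require careful tracking of common prefixes and suffixes rather than purely local edge data; in particular one must verify that such a divergence vertex is always an inner vertex where the framing order is defined, which uses the fact that no path can pass through a source in its interior.
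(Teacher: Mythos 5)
Your proposal is correct and takes essentially the same approach as the paper: the forward direction via Theorem~\ref{thm:uniqueexceptional} together with the first/second (top/bottom) labeling from Lemma~\ref{lem:abovebelowexceptionals}, and the converse by constructing the framing locally from the two oppositely-colored routes through each full inner vertex. The only adjustment needed is to conclude ampleness from $X\subseteq\calE$ and Theorem~\ref{thm:DKKample}(3) \emph{before} invoking Lemma~\ref{lem:abovebelowexceptionals} (which presupposes an ample framing) in your argument for $\calE\subseteq X$; with that reordering, your explicit coherence verification fills in a step that the paper's proof only asserts.
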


\begin{proof}
Assume that $F$ is ample and let $\calE$ denote the set of exceptional routes.
Since $G$ is full, by Theorem~\ref{thm:uniqueexceptional} every edge belongs to a unique exceptional route.
For every exceptional route $R$ in $G$ with respect to $F$, Lemma~\ref{lem:abovebelowexceptionals} implies that $R$ can be labeled as either ``first'' or ``second'', depending on how $R$ passes through the linear orders of inner vertices.
Two routes with the same label cannot intersect at a full vertex $v$, as the two unique exceptional routes passing through $v$ must use both of the labels ``first'' and ``second".
Thus, in $\Adj(G,\calE)$, the labels ``first'' and ``second'' induce a bipartition of the routes.

For the converse, assume that $X$ is a set of routes such that every edge in $G$ is contained in a unique route and $\Adj(G,X)$ is bipartite with bipartition $A\uplus B$.
Label the routes in $A$ as ``first'' and the routes in $B$ as ``second''.
We construct a framing $F$ as follows.
Let $v$ be an inner vertex in $G$, with incoming edges $e_1$ and $e_2$ and outgoing edges $h_1$ and $h_2$.
Without loss of generality, suppose that $e_i$ and $h_i$ lie on a common route $R_i$ in $X$ for $i=1,2$.
Since $v$ is a full inner vertex common to both $R_1$ and $R_2$, it follows that $R_1$ and $R_2$ are not both in $A$ and are not both in $B$; hence each route has a distinct label.
Define the linear orders for $F$ on $\{e_1,e_2\}$ and $\{h_1,h_2\}$ by placing them first and second according to the labels on $R_1$ and $R_2$.
Since every edge in $G$ is in a unique route in $X$, linear orders for $F$ exist at every inner vertex and are well-defined.
Having constructed $F$, we finish by showing that $X$ is the set of exceptional routes for $F$, from which ampleness of $F$ follows from Theorem~\ref{thm:DKKample}.
Note that any exceptional route in $F$ is uniquely defined by passing through a specific edge with a given label, and every route in $X$ arises by passing through a specific edge with a given label.
Thus, $X$ is the set of exceptional routes for $F$, completing the proof.
\end{proof}

\begin{example}\label{ex:adjacencyclassification}
Continuing Example~\ref{ex:adjacencygraph}, note that the set $X$ cannot be an exceptional set because both the adjacency graph is not bipartite and the edge $15$ is not contained in any route in $X$. However, the set $\{123456,126,136,146,156\}$ contains every edge and has a bipartite adjacency graph; hence, it is the exceptional set for an ample framing.
\end{example}

\begin{corollary}\label{cor:amplevialabeling}
Suppose that $G$ is a full DAG and the edges of $G$ are labeled by the set $\{1,2\}$ such that if $e_1$ and $e_2$ are edges that are adjacent at an inner vertex, then the labels of $e_1$ and $e_2$ are distinct.
Then $G$ has an ample framing $F$ where the exceptional routes in $G$ consist of edges with constant labels. 
Conversely, any ample framing of $G$ induces such a labeling $~{\omega:E \rightarrow \{1,2\}}$ on the edges of $G$, given explicitly by
\begin{equation}\label{eq:weight}
\omega(e)= \begin{cases}
1, & \hbox{if $e=(v_1,v_2)$ is minimal in $\Out(v_1)$ and $\In(v_2)$},\\
2, & \hbox{if $e=(v_1,v_2)$ is maximal in $\Out(v_1)$ and $\In(v_2)$}.
\end{cases}
\end{equation}
\end{corollary}

\begin{proof}
Every edge $e$ leaving a source in $G$ determines a route in $G$ obtained by following the edges with the same label as $e$, yielding a set of routes $X$.
Every edge is contained in a unique route of this type, and the labeling implies that $\Adj(G,X)$ is bipartite.
The result then follows from Theorem~\ref{thm:ampleclassification}.
Given an ample framing of $G$, the two sets in the bipartition of $\Adj(G,X)$ can be labeled $1$ and $2$, inducing a labeling of the edges as described in the corollary.
\end{proof}

We know that if $G$ is a DAG without idle edges that admits an ample framing, then $G$ must be full.
Our next goal is to prove that every full DAG admits at least one ample framing.

\begin{theorem}\label{thm:ampleexistence}
If $G$ is a full DAG, then $G$ admits an ample framing.
\end{theorem}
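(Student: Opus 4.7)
The plan is to invoke Corollary~\ref{cor:amplevialabeling}, which reduces the construction of an ample framing to producing a labeling of $E(G)$ by $\{1,2\}$ such that at every inner vertex $v$ the two edges of $\mathrm{in}(v)$ receive distinct labels, and the two edges of $\mathrm{out}(v)$ receive distinct labels. To encode these constraints, I would introduce an auxiliary multigraph $H$ with vertex set $E(G)$ as follows: for each inner vertex $v$ of $G$, add one $H$-edge (an \emph{in-pair}) joining the two edges of $\mathrm{in}(v)$, and one $H$-edge (an \emph{out-pair}) joining the two edges of $\mathrm{out}(v)$. A valid $\{1,2\}$-labeling of $E(G)$ of the required type is then exactly a proper $2$-coloring of $H$, so the task becomes showing that $H$ is bipartite.

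Next, I would note that $H$ has maximum degree $2$: each $e=(u,v)\in E(G)$ is incident to at most one in-pair (arising at $v$, when $v$ is inner) and at most one out-pair (arising at $u$, when $u$ is inner). Hence $H$ decomposes as a disjoint union of paths and cycles, and bipartiteness reduces to showing that every cycle of $H$ has even length.

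The key step is the claim that the types of $H$-edges alternate (in-pair, out-pair, in-pair, \ldots) around any cycle. Indeed, at any vertex $e$ of $H$ lying on a cycle, both of its $H$-neighbors must be traversed, and these are necessarily the unique in-pair and the unique out-pair at $e$; the two incident $H$-edges have different types, so no two consecutive edges of a cycle can share a type. Alternation then forces the cycle length to be even. I expect the main subtlety here is handling parallel edges of $G$: if $G$ has two edges from $u$ to $v$ with both endpoints inner, these are joined in $H$ by both an in-pair and an out-pair, producing a $2$-cycle — which is even, and therefore harmless, but worth flagging explicitly.

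With $H$ shown to be bipartite, any proper $2$-coloring of $H$ gives the desired labeling of $E(G)$, and Corollary~\ref{cor:amplevialabeling} then produces an ample framing $F$ of $G$. The hard part of the argument is the alternation-of-types observation and its careful verification in the multigraph setting; once that is in hand, the rest is essentially bookkeeping.
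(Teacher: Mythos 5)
Your proof is correct, but it reaches the conclusion by a genuinely different route than the paper. The paper also reduces the problem to Corollary~\ref{cor:amplevialabeling}, but it produces the required labeling by explicitly decomposing $E(G)$ into edge-disjoint alternating paths and even cycles: it fixes a linear extension of $G$, associates to each inner vertex $v$ the length-two path formed by its two incoming edges, and inductively glues these pieces together (with a four-case analysis), arguing that every cycle is even because each step appends exactly two edges. Your auxiliary multigraph $H$ encodes the same adjacency structure --- its nontrivial connected components are precisely the paths and cycles of the paper's decomposition --- but you obtain bipartiteness directly from the observation that each vertex of $H$ meets at most one in-pair and at most one out-pair, so the two edge types must alternate around any cycle and force even length. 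This replaces the linear extension and the inductive bookkeeping with a short parity argument, and your explicit flagging of the $2$-cycles arising from parallel edges of $G$ is exactly the degenerate case that needs care. What the paper's constructive version buys is that the decomposition itself is reused downstream: Theorem~\ref{thm:poweroftwo} counts ample framings as $2^M$ in terms of the number $M$ of nontrivial pieces. Your $H$ would serve equally well there, since a disjoint union of $M$ nontrivial paths and even cycles has exactly $2^M$ proper $2$-colorings.
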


\begin{proof}
We claim that $G$ can be written as an edge-disjoint union of DAGs of the following types: even cycles where the direction of the edges in the cycle are alternating, paths that start at a source or sink and end at a source or sink where the direction of the edges in the path are alternating, and edges from a source to a sink.
Given such a decomposition of $G$, any labeling of the paths and cycles that alternates $1$'s and $2$'s induces a labeling satisfying Corollary~\ref{cor:amplevialabeling}, and the result follows.

Let $\calO$ be a linear extension for $G$, i.e., a linear ordering $\calO=\{v_1<v_2<\cdots <v_n\}$ on the vertices of $G$ such that for any $v_i<v_j$ in $\calO$ we have that $(v_j,v_i)$ is not a directed edge in $G$.
We also assume that the sources of $G$ form the initial segment of $\calO$ and the sinks in $G$ form the terminal segment in $\calO$.
Every finite DAG admits a linear extension, as can be shown via induction by iteratively removing sinks from $G$ and adding it to the linear order.
Each $v_i$ in $\calO=\{v_1< \cdots <v_n\}$ that is an inner vertex in $G$ is associated to a unique length-two path in $G$, $P_i:=v_{j_1},v_i,v_{j_2}$, where $(v_{j_1},v_i)$ and $(v_{j_2},v_i)$ are both edges in $G$.
Note that the edges in $P_i$ alternate in direction.
Let $G_{i-1}$ denote the union of $P_j$ for all $j<i$, and assume by induction that $G_{i-1}$ has a decomposition of our desired type into paths and cycles that consist of sequences of $P_j$'s connected at their degree-one inner-vertex endpoints.
It is immediate that $G_1=P_1$ satisfies this, and thus we have a base case.
For the $i$-th step, consider the edges $(v_{j_1},v_i)$ and $(v_{j_2},v_i)$ that make up $P_i$.
One of the following must hold:
\begin{enumerate}
    \item Each endpoint of $P_i$ is an inner vertex, and these endpoints are the two endpoints of a single path in the decomposition of $G_{i-1}$ with ending edges from $\mathrm{out}(v_{j_1})$ and $\mathrm{out}(v_{j_2})$.
    Attaching $P_i$ to this path forms a cycle in $G_i$.
    \item One of the two endpoints of $P_i$ is adjacent to a degree-one inner vertex in a path in the decomposition of $G_{i-1}$ that includes an edge in either $\mathrm{out}(v_{j_1})$ or $\mathrm{out}(v_{j_2})$.
    In this case, $P_i$ extends the path by two more edges.
    \item Both of the two endpoints of $P_i$ are adjacent to degree-one inner vertices in two distinct paths in the decomposition of $G_{i-1}$, each of which include one edge from $\mathrm{out}(v_{j_1})$ or $\mathrm{out}(v_{j_2})$.
    In this case, the concatenation of the two existing paths via $P_i$ forms a single path.
    \item Neither of the other edges in $\mathrm{out}(v_{j_1})$ and $\mathrm{out}(v_{j_2})$ are contained in $G_{i-1}$.
    In this case, $P_i$ is added to the decomposition of $G_{i-1}$, yielding a decomposition of $G_i$. 
\end{enumerate}
In any of these cases, the result is that the length of the paths and cycles in the disjoint decomposition is even, since we are always appending a path of length two at each step.
Once $G_n$ is obtained, then $G$ is formed by adding the sinks of $G$ and then extending any path in $G_n$ that terminates at an inner vertex $v$ by the edge from $v$ to a sink of $G$, and also by creating length-two paths for any pairs of multiedges that connect a sink in $G$ to the same inner vertex.
\end{proof}

\begin{figure}
\begin{center}
\begin{tikzpicture}
\begin{scope}[scale=1, xshift=0, yshift=0]
    \vertex[fill,label=below:\footnotesize{$s$}](as) at (0,0) {};
	\vertex[fill,label=below:\footnotesize{$1$}](a1) at (1,0) {};
	\vertex[fill,label=below:\footnotesize{$2$}](a2) at (2,0) {};
	\vertex[fill,label=below:\footnotesize{$3$}](a3) at (3,0) {};
	\vertex[fill,label=below:\footnotesize{$4$}](a4) at (4,0) {};
	\vertex[fill,label=below:\footnotesize{$5$}](a5) at (5,0) {};
	\vertex[fill,label=below:\footnotesize{$6$}](a6) at (6,0) {};
	\vertex[fill,label=below:\footnotesize{$7$}](a7) at (7,0) {};
	\vertex[fill,label=below:\footnotesize{$8$}](a8) at (8,0) {};
	\vertex[fill,label=below:\footnotesize{$9$}](a9) at (9,0) {};
	\vertex[fill,label=below:\footnotesize{$X$}](a10) at (10,0) {};
	\vertex[fill,label=below:\footnotesize{$t$}](at) at (11,0) {};

	\draw[-stealth, thick] (as) to[out=60,in=120] (a1);
	\draw[-stealth, thick] (as) to[out=-60,in=-120] (a1);
	\draw[-stealth, thick] (as) to[out=60,in=120] (a2);
	\draw[-stealth, thick] (as) to[out=-60,in=-120] (a2);
	\draw[-stealth, thick] (as) to[out=60,in=120] (a3);
	\draw[-stealth, thick] (as) to[out=-60,in=-120] (a3);
	\draw[-stealth, thick] (as) to[out=-60,in=-120] (a7);
	\draw[-stealth, thick] (as) to[out=60,in=120] (a6);
	\draw[-stealth, thick] (as) to[out=-60,in=-120] (a10);
	\draw[-stealth, thick] (a1) to[out=-60,in=-120] (a4);
	\draw[-stealth, thick] (a1) to[out=60,in=120] (a7);
	\draw[-stealth, thick] (a2) to[out=60,in=120] (a4);
	\draw[-stealth, thick] (a2) to[out=-60,in=-120] (a5);
	\draw[-stealth, thick] (a3) to[out=60,in=120] (a5);
	\draw[-stealth, thick] (a3) to[out=-60,in=-120] (a6);
	\draw[-stealth, thick] (a4) to[out=60,in=120] (at);
	\draw[-stealth, thick] (a4) to[out=-60,in=-120] (a9);
	\draw[-stealth, thick] (a5) to[out=60,in=120] (a8);
	\draw[-stealth, thick] (a5) to[out=-60,in=-120] (at);
	\draw[-stealth, thick] (a6) to[out=60,in=120] (a10);
	\draw[-stealth, thick] (a6) to[out=-60,in=-120] (at);
	\draw[-stealth, thick] (a7) to[out=-60,in=-120] (a8);
	\draw[-stealth, thick] (a7) to[out=60,in=120] (a9);
	\draw[-stealth, thick] (a8) to[out=60,in=120] (at);
	\draw[-stealth, thick] (a8) to[out=-60,in=-120] (at);
	\draw[-stealth, thick] (a9) to[out=60,in=120] (at);
	\draw[-stealth, thick] (a9) to[out=-60,in=-120] (at);
	\draw[-stealth, thick] (a10) to[out=60,in=120] (at);
	\draw[-stealth, thick] (a10) to[out=-60,in=-120] (at);
\end{scope}
\end{tikzpicture}
\end{center}
\caption{The graph $G$ from Example~\ref{ex:disjointpathscycles}.}
\label{fig:disjointpathscycles}
\end{figure}
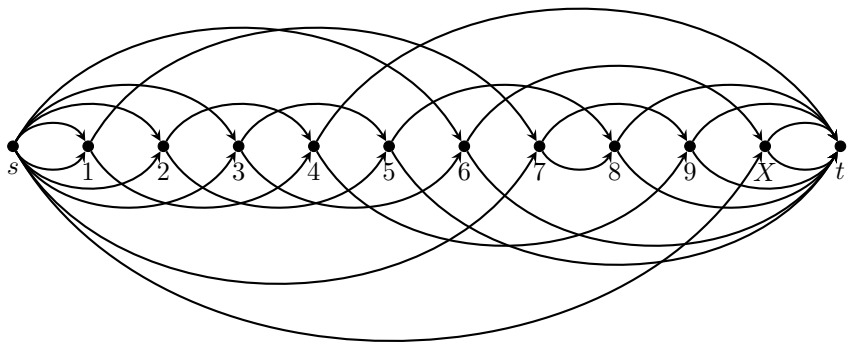

\begin{example}\label{ex:disjointpathscycles}
Let $G$ be the DAG in Figure~\ref{fig:disjointpathscycles}, with the inner vertices in the linear extension used in the proof of Theorem~\ref{thm:ampleexistence} read from left to right in the figure.
The disjoint cycle and path decomposition of $G$ arising from the algorithm in the proof of Theorem~\ref{thm:ampleexistence} is:
\[
s1s, s2s, s3s, s7142536s, t58794t, sX6t, t8t, t9t, tXt
\, .\]
\end{example}

Combining Lemma~\ref{lem:amplemeansfull} and Theorem~\ref{thm:ampleclassification}, we obtain the following corollary.

\begin{corollary}\label{cor:amplecharacterizationiff}
A DAG $G$ with no idle edges admits an ample framing if and only if $G$ is full.
\end{corollary}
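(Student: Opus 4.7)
The plan is to observe that the corollary is essentially a direct combination of two results already established in this section, namely Lemma~\ref{lem:amplemeansfull} and Theorem~\ref{thm:ampleexistence}. The corollary is a biconditional, so I will verify each direction separately, and both directions should be short once the appropriate prior results are invoked.

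For the forward direction, suppose $G$ is a DAG with no idle edges that admits an ample framing $F$. Then Lemma~\ref{lem:amplemeansfull} directly tells us that every inner vertex $v$ satisfies $\indeg(v) = 2 = \outdeg(v)$, which is exactly the condition for $G$ to be full by Definition~\ref{def:fullgraph}. So this direction is immediate.

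For the reverse direction, suppose $G$ is a full DAG. First I would note that fullness implies there are no idle edges: recall that an edge is idle precisely when it is the only incoming or outgoing edge at an inner vertex. Since every inner vertex of a full $G$ has $\indeg = \outdeg = 2$, no edge incident to an inner vertex can be the unique edge in $\mathrm{in}(v)$ or $\mathrm{out}(v)$. Thus $G$ has no idle edges, and Theorem~\ref{thm:ampleexistence} provides an ample framing.

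There is no real obstacle here, since the hard work is already done in Lemma~\ref{lem:amplemeansfull} and Theorem~\ref{thm:ampleexistence}; the only subtle point is to record the trivial observation that fullness forces the absence of idle edges so that the hypotheses match up correctly. The proof should fit in just a few lines.
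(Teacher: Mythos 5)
Your proof is correct and matches the paper's approach: the paper obtains this corollary by combining Lemma~\ref{lem:amplemeansfull} (the forward direction) with the existence result for full DAGs (your Theorem~\ref{thm:ampleexistence}, which the paper reaches via Theorem~\ref{thm:ampleclassification}). Your added remark that fullness precludes idle edges is harmless but not needed, since the absence of idle edges is already a hypothesis of the corollary.
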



\subsection{Valid Framed Graphs}

Having handled the case where $G$ has no idle edges, we now consider the case where $G$ contains idle edges.

\begin{definition}\label{def:fullcontraction}
Given a DAG $G$, suppose that $G$ admits a complete contraction $H$ such that $H$ is full.
In this case we call $H$ a \emph{full contraction} of $G$.
If $G$ admits a full contraction, then we say $G$ is \emph{valid}.
\end{definition}

Observe that the graph in Figure~\ref{fig:nu-graph} is a full contraction of $\car(8)$, and thus $\car(8)$ is valid.
Starting with a full graph, we can construct valid graphs by reversing the effects of idle-edge contractions, leading to the following definition.

\begin{definition}\label{def:idleedgeexpansion}
If $G$ and $G'$ are DAGs such that $G$ is obtained from $G'$ by contracting an idle edge, then we say that $G'$ is an \emph{idle-edge expansion} of $G$.
If $G=G^1,G^2,G^3,\ldots,G^m$ is a sequence of graphs such that $G^i$ is an idle-edge expansion of $G^{i-1}$ for every $i$, where $e_i$ is the new edge introduced to $G^{i-1}$, we call $G^m$ an \emph{idle expansion} of $G$.
If $e_j$ is one of these idle edges such that there exists a directed path $e_{i_1}e_{i_2}\cdots e_{i_k}e_j$ in $G^m$ where $e_{i_1}$ leaves a source, then we say $e_j$ is \emph{source-reachable} with respect to the idle edge expansion producing $G^m$; we define \emph{sink-reachable} idle edges similarly.
If no such directed path to $e_j$ from a source or sink exists in $G^m$, then we say that $e_j$ is an \emph{inner idle edge} with respect to the idle expansion producing $G^m$.
\end{definition}

By definition, every valid graph is obtained as an idle expansion of a full graph.
This leads to the idle edges in a valid graph having restricted structure.

\begin{proposition}\label{prop:idleforest}
Given a valid graph $G$, the set of idle edges forms a forest in $G$.
Further, the components of the forest containing source-reachable idle edges are rooted at the sources of $G$, and similarly for sink-reachable idle edges.
\end{proposition}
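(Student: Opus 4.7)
The plan is to exploit the idle-expansion characterization from Definition~\ref{def:idleedgeexpansion}: since $G$ is valid, it admits a full contraction $H$, and reversing a complete contraction realizes $G$ as an idle expansion $H = G^0, G^1, \ldots, G^m = G$. I would proceed by induction on $m$. The base case $m = 0$ is vacuous since $G = H$ is full and has no idle edges.

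For the inductive step, the final expansion introduces one brand-new vertex $v$ and one new idle edge $e$ connecting $v$ to an ``old'' vertex $u$ (the relabeled split vertex); by construction, $v$ has degree $1$ in the direction for which $e$ is its unique incident edge. The crucial structural observation is that, passing from $G^{m-1}$ to $G$, the only edges whose idleness status changes are $e$ itself (newly idle) and possibly one edge at $v$ inherited from the subdivided edge (newly idle precisely when $v$ has degree $1$ in both directions). Since $v$ has no prior incidences in $G^{m-1}$, attaching $e$ (together with the possible collaterally idle edge) to the forest of idle edges of $G^{m-1}$ only introduces $v$ as a pendant leaf or as a degree-$2$ internal vertex of a new idle path, so no undirected cycle is created and the forest property is preserved. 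For the root conditions I would split into cases on $u$: either $u$ already belongs to a source- or sink-reachable component in the $G^{m-1}$-forest, in which case $e$ extends that component toward its existing root, or $u$ directly inherits source- or sink-status from the split vertex $w$, making the new component containing $e$ rooted at a source or sink of $G$.

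The hardest step I anticipate is the bookkeeping for the collaterally idle edge at $v$ when the expansion subdivides an existing edge: one must identify precisely which inherited edge becomes newly idle, verify that it attaches as a pendant rather than closing a cycle in the evolving forest, and confirm that the extended component is consistently rooted. This should reduce to a finite case analysis on the ambient role of the other endpoint of the subdivided edge within the forest of $G^{m-1}$, but it is the source of most of the technical content of the proof.
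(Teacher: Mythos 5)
Your inductive framework is the same as the paper's: realize $G$ as an idle expansion of its full contraction $H$ and track how the set of idle edges changes under one expansion at a time. You have, moreover, correctly isolated the crux that the paper's own proof silently skips: a single idle-edge expansion can make edges other than the new edge $e$ idle, so it is not enough to observe that splitting a vertex of a forest into an edge yields a forest.

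The difficulty is that the step you defer to ``a finite case analysis'' is exactly the step that fails. You claim the collaterally idle edge attaches to the evolving forest ``as a pendant rather than closing a cycle,'' but that edge has a second endpoint $u$ besides the new degree-$(1,1)$ vertex, and nothing prevents $u$ from already lying in the same component of the idle-edge forest of $G^{m-1}$ as the split vertex; when it does, adding $e$ together with the collateral edge closes a cycle. Concretely, let $H$ be the full DAG consisting of two parallel edges $s\rightrightarrows t$, expand $t$ into $a\to t$ carrying one parallel edge into $a$, then expand $s$ into $s\to b$ carrying the other parallel edge out of $b$. The result is the diamond $s\to a\to t$, $s\to b\to t$, which is valid in the sense of Definition~\ref{def:fullcontraction}; all four of its edges are idle and their underlying undirected graph is a $4$-cycle. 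So the pendant claim cannot be proved by the case analysis you outline, and indeed the statement itself needs an extra hypothesis (or a reinterpretation of ``forest'') to exclude such configurations. To complete an argument along these lines you would have to identify and verify a structural condition guaranteeing that the far endpoint of a collaterally idle edge is never already joined, through previously idle edges, to the vertex being split; neither your sketch nor the paper's proof supplies one.
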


\begin{proof}
We go by induction on the number of idle edges.
Starting from a full graph $H$, a single idle-edge expansion creates a valid graph with a single idle edge, which is a forest.
Now assume that $G$ is a valid graph whose idle edges form a forest in $G$.
An idle-edge expansion of $G$ is obtained by selecting a single vertex $v$ in $G$ and expanding that vertex to become an edge $e=(w_1,w_2)$, where the set of outgoing edges from $v$ are split into outgoing edges from $w_1$ and $w_2$, and similarly for the incoming edges to $v$.
If $v$ is not an endpoint of an idle edge in $G$, then the new edge $e$ is a new component of the forest in the resulting idle-edge expansion of $G$ by $e$.
If $v$ is a vertex in the forest of idle edges in $G$, then expanding $v$ to $e$ results in a larger forest of idle edges in the expansion of $G$ by $e$.
\end{proof}

\begin{proposition}\label{prop:validframingexpansion}
Suppose $[G,F]$ is a valid DAG and $F$ is ample.
If $H$ is an idle-edge expansion of $G$ by $e$, then $H$ admits an ample framing with the property that there is a bijection between the exceptional routes in $G$ and the exceptional routes in $H$.
Further, every ample framing of $H$ collapses to an ample framing of $G$ with this property.
\end{proposition}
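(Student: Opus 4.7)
The plan is to show that the natural contraction $H \to G$ sending routes of $H$ to routes of $G$ is a bijection that transports ample framings and their exceptional sets. Without loss of generality suppose the new edge $e = (w_1, w_2)$ is idle because $w_1$ is an inner vertex of $H$ with $\mathrm{out}_H(w_1) = \{e\}$; the alternative in which $w_2$ has $e$ as its unique in-edge is symmetric, and the situation in which both hold simultaneously is a strict specialization of the argument. Let $v \in V(G)$ be the vertex obtained by contracting $e$, and write $A_1 = \mathrm{in}_H(w_1) \subseteq \mathrm{in}_G(v)$ and $A_2 = \mathrm{in}_G(v) \setminus A_1$, so that $\mathrm{in}_H(w_2) = A_2 \cup \{e\}$ and $\mathrm{out}_H(w_2) = \mathrm{out}_G(v)$.

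First I would establish that contracting $e$ defines a bijection $\Phi: \calP(H) \to \calP(G)$: every route of $H$ through $w_1$ must continue via $e$ (since $\mathrm{out}_H(w_1) = \{e\}$), and conversely every route of $G$ through $v$ with in-edge in $A_1$ lifts uniquely in $H$ through $w_1, e, w_2$. Next I would define $F'$ on $H$ by inheriting $F$'s orders at every inner vertex of $H$ that already exists in $G$; at $w_1$, using the restriction of $F$'s order on $\mathrm{in}_G(v)$ to $A_1$; at $w_2$, using $F$'s order on $\mathrm{out}_G(v)$ together with the restriction of $F$'s order on $A_2$, inserting $e$ into this latter order at the position corresponding to where $A_1$ sits within $F$'s order on $\mathrm{in}_G(v)$.

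The heart of the argument is the coherence correspondence: routes $P', Q' \in \calP(H)$ conflict at some inner vertex of $H$ if and only if $\Phi(P'), \Phi(Q') \in \calP(G)$ conflict at some inner vertex of $G$. I would verify this by cases on the conflict vertex in $H$. Conflicts at vertices shared by $G$ and $H$ transfer directly from $F$. A conflict at $w_1$ forces both in-edges to lie in $A_1$ and both routes to diverge downstream of $e$, and this reduces to a conflict at $v$ via a direct computation on the induced $\In$ and $\Out$ orders. A conflict at $w_2$ splits into three sub-cases, depending on whether each of $P', Q'$ enters $w_2$ through an edge of $A_2$ or through $e$; the cases where both routes enter via $A_2$ or both enter via $e$ match the conflict structure at $v$ by transport, while the mixed case where one enters via $e$ and the other via $a \in A_2$ matches by the design choice for the position of $e$.

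Once the coherence correspondence is established, exceptional routes biject under $\Phi$, and $F'$ is ample by Theorem~\ref{thm:DKKample} condition (3): every non-idle edge $f$ of $H$ (necessarily distinct from $e$) is also non-idle in $G$, since a direct case check shows that edges idle in $G$ remain idle in $H$ after the split of $v$; hence $f$ lies in an exceptional route of $G$, whose lift under $\Phi^{-1}$ contains $f$ and is exceptional in $H$. The converse is symmetric: given ample $F'$ on $H$, one defines $F$ on $G$ by inheriting $F'$'s orders at every inner vertex of $G$ other than $v$, and by setting the order at $v$ to be $F'$'s order on $\mathrm{in}_H(w_2)$ with $e$ replaced by the block $A_1$ in its order from $F'$'s order on $\mathrm{in}_H(w_1)$, with the coherence correspondence again yielding both ampleness of $F$ and the route bijection. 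The main obstacle is the mixed sub-case at $w_2$: consistency of the coherence correspondence there requires that $A_1$ form a contiguous block in $F$'s order on $\mathrm{in}_G(v)$, which may necessitate a preliminary rearrangement of $F$ at $v$; justifying that such a rearrangement preserves ampleness and the exceptional set, by verifying that the only conflict pairs affected are among edges internal to $A_1$ (whose relative order within $A_1$ is unchanged), is the delicate technical step.
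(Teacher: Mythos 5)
Your overall strategy --- a route bijection $\Phi$ under contraction, transport of the framing, and an exact ``coherence correspondence'' between conflicts in $H$ and conflicts in $G$ --- is genuinely different from the paper's argument, and it founders on exactly the point you flag and then defer. In the mixed sub-case at $w_2$, the comparison in $H$ is $e$ versus $a\in A_2$ in $\mathrm{in}_H(w_2)$, while in $G$ it is $f$ versus $a$ in $\mathrm{in}_G(v)$ for whichever $f\in A_1$ the route actually uses; no single position for $e$ reproduces all of these unless $A_1$ is an interval of $F$'s order on $\mathrm{in}_G(v)$. This situation does occur in valid DAGs (e.g.\ $v$ of in-degree three with the in-edges split $|A_1|=2$, $|A_2|=1$ and the $A_1$-edges not adjacent in $F$'s order), so the ``delicate technical step'' you postpone is not a technicality --- it is the entire content of the proposition in the hard case. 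Moreover, the justification you sketch for the preliminary rearrangement is incorrect: reordering $\mathrm{in}_G(v)$ so that $A_1$ becomes contiguous necessarily changes the relative order of some pairs in $A_1\times A_2$, not merely pairs internal to $A_1$, and such changes can create or destroy conflicts at $v$ between routes entering through $A_1$ and routes entering through $A_2$. Hence the rearranged framing may have a different exceptional set than $[G,F]$, whereas the proposition requires a bijection with the exceptional routes of the \emph{given} framing $F$.

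The paper avoids this entirely by invoking the structure of valid graphs from Proposition~\ref{prop:idleforest}: the idle edges form a forest whose source-reachable components are rooted at sources, so any vertex on such a component is reached from a source by a unique path of idle edges; consequently $\In$ of that vertex is a singleton set of paths, no conflict can occur there, and the linear order on its outgoing edges is irrelevant to exceptionality (dually for sink-reachable components). For inner idle edges the orders at $w_1,w_2$ are simply copied from $\mathrm{in}(v)$ and $\mathrm{out}(v)$ and the correspondence is exact. In other words, the cases where your insertion trick fails are precisely the cases where the forest structure guarantees that conflicts cannot occur at the offending vertex in the first place. To repair your proof you would need to import this structural input (or prove directly that whenever $|A_1|\geq 2$ and $A_2\neq\emptyset$, either $\Out$ of $w_1$ or $\In$ of the relevant vertices is a singleton set of paths); note also that your converse direction, which merges $\mathrm{in}_H(w_1)$ into $\mathrm{in}_H(w_2)$ as a block, is fine as written --- only the forward direction has the gap.
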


\begin{proof}
Since the source-reachable idle edges in $G$ form a forest with components rooted at sources, there is a unique path from any source to a leaf of such a component.
Thus, there is only one edge entering a vertex in such a component, and the linear order on the outgoing edges of any such vertex is irrelevant to whether or not a route is exceptional.
The situation is similar for sink-reachable idle edges.
If $e=(w_1,w_2)$ is expanded from the vertex $v$ in $G$, we know that the set of outgoing edges from $v$ are split into outgoing edges from $w_1$ and $w_2$, and similarly for the incoming edges to $v$.
If that vertex is in a source- or sink-reachable idle edge, then any linear order on the outgoing/incoming edges of $w_1$ and $w_2$ will extend $F$ to a framing of $H$ where the exceptional routes in $H$ are the same as in $G$ except that any routes passing through $v$ now include $e$.
If the vertex is in an inner idle edge, then we use the same linear orders for the incoming and outgoing edges of $w_1$ and $w_2$ that are used for $\mathrm{in}(v)$ and $\mathrm{out}(v)$, and similarly any exceptional route in $G$ passing through $v$ extends uniquely to an exceptional route in $H$ containing $e$.
When $e$ is contracted from $H$ to $G$, we can merge the framing orders to preserve exceptional routes.
\end{proof}

By iteratively applying Proposition~\ref{prop:validframingexpansion} starting from a full DAG, we obtain the following corollary.

\begin{corollary}
Every valid DAG~$G$ admits an ample framing.
Further, if $H$ is a full contraction of $G$, then there is a bijection between the set of DKK triangulations of $\calF_+(G)$ and the set of DKK triangulations of $\calF_+(H)$.
\end{corollary}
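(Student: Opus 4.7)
The plan is to prove both assertions by iteratively applying Proposition~\ref{prop:validframingexpansion}, using a full contraction of $G$ as the base case. Since $G$ is valid, Definition~\ref{def:fullcontraction} guarantees a full contraction $H$ such that $G$ is obtained from $H$ by a finite sequence of idle-edge expansions $H = G^{(0)}, G^{(1)}, \ldots, G^{(m)} = G$, where each $G^{(i)}$ arises from $G^{(i-1)}$ by a single idle-edge expansion. Theorem~\ref{thm:ampleexistence} provides an ample framing $F_H$ on $H=G^{(0)}$, and applying Proposition~\ref{prop:validframingexpansion} inductively along this sequence produces an ample framing on each $G^{(i)}$, and in particular on $G^{(m)} = G$. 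This establishes the first assertion.

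For the second assertion, the key is to combine Proposition~\ref{prop:validframingexpansion} with Proposition~\ref{prop:contractidle}. The latter identifies the facets of $\calF_+(G^{(i)})$ with the edges of its complete contraction, so that projection along the newly introduced idle-edge coordinate gives a lattice-preserving linear isomorphism between the affine hulls of $\calF_+(G^{(i)})$ and $\calF_+(G^{(i-1)})$ that sends facets to facets. Proposition~\ref{prop:validframingexpansion} gives a bijective correspondence between framings of $G^{(i-1)}$ and $G^{(i)}$ preserving exceptional routes, and since a DKK triangulation is determined combinatorially by its maximal cliques of coherent routes, this correspondence induces a bijection $\DKK(G^{(i-1)}, F') \leftrightarrow \DKK(G^{(i)}, F)$ compatible with the geometric identification of the cones. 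Composing these bijections down the sequence $G^{(0)}, G^{(1)}, \ldots, G^{(m)}$ yields the desired bijection between DKK triangulations of $\calF_+(H)$ and $\calF_+(G)$.

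The main obstacle I expect is verifying the bijection at each idle-edge expansion step for arbitrary (not necessarily ample) framings, since Proposition~\ref{prop:validframingexpansion} is explicitly stated only for ample framings. To handle this, I would observe that at the endpoints of an idle edge $e=(w_1, w_2)$, the coherence conditions on routes are dictated by orderings at vertices away from the idle edge: any route through $w_1$ must use $e$ (when $\mathrm{out}(w_1)=\{e\}$), so the linear order on $\mathrm{out}(w_1)$ is trivial and conflicts at $w_1$ are governed entirely by orderings further along the routes. A careful analysis along these lines should show that the framing correspondence preserves coherence, and hence the entire clique structure, allowing the induction to proceed without assuming ampleness at intermediate stages.
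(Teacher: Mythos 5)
Your proposal matches the paper's argument: the corollary is obtained exactly by iterating Proposition~\ref{prop:validframingexpansion} along the chain of idle-edge expansions from the full contraction $H$, whose ample framing is supplied by Theorem~\ref{thm:ampleexistence}. Your extra care in the second assertion (identifying the cones via Proposition~\ref{prop:contractidle} and worrying about non-ample framings) goes beyond what the paper records, since it offers no further detail, but it is consistent with the intended reading and does not change the approach.
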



\section{Enumerating Ample Framings}\label{sec:enumerating}

In this section, we consider the problem of counting the ample framings for valid DAGs.
We also determine the number of ample framings for several special classes of DAGs.

\begin{theorem}\label{thm:poweroftwo}
In the decomposition of a full DAG $G$ into edge-disjoint even cycles and paths satisfying the conditions in the proof of Theorem~\ref{thm:ampleexistence}, suppose that there are $M$ paths and cycles that contain at least one inner vertex, i.e., $M$ paths and cycles excluding edges from a source to a sink.
Then there are $2^M$ ample framings of $G$.
\end{theorem}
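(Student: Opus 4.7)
The plan is to reduce the problem to counting valid edge-labelings via Corollary~\ref{cor:amplevialabeling}, then use the structure of the decomposition from the proof of Theorem~\ref{thm:ampleexistence} to perform the count.

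First I would apply Corollary~\ref{cor:amplevialabeling} to set up a bijection between ample framings of $G$ and $\{1,2\}$-labelings of the edges of $G$ satisfying the local condition that, at every inner vertex $v$, the two edges of $\mathrm{in}(v)$ receive distinct labels and the two edges of $\mathrm{out}(v)$ receive distinct labels. Since Definition~\ref{def:framing} only specifies linear orders at inner vertices, two such labelings that differ only on edges from a source to a sink determine the same framing; counting ample framings therefore amounts to counting valid labelings modulo arbitrary flips on source-to-sink edges.

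Next I would analyze the decomposition from the proof of Theorem~\ref{thm:ampleexistence}. The key observation to extract is that, along any path or cycle in the decomposition, two consecutive edges meeting at an inner vertex $v$ are either both in $\mathrm{in}(v)$ or both in $\mathrm{out}(v)$. This is precisely the content of the alternation of edge directions along each component: each $P_{v}$ pairs both in-edges of $v$, and the concatenations in cases (1)--(4) of that proof pair two out-edges at any shared inner-vertex endpoint. Consequently, a labeling satisfies the local condition at every inner vertex exactly when its restriction to each component of the decomposition is alternating. Each path of length at least two contributes exactly two alternating labelings, determined by the label of any fixed edge; each cycle, being of even length by Theorem~\ref{thm:ampleexistence}, likewise contributes two (this is where evenness is essential, since an odd cycle would admit no alternating labeling); and each source-to-sink edge is a length-one component contributing only framing-irrelevant data.

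Multiplying these contributions over the $M$ components of the decomposition that contain at least one inner vertex yields $2^M$ ample framings. The step requiring the most care is the identification of the local labeling condition with alternation along components of the decomposition; I would justify it by following the construction of the decomposition to verify that the forced in-in pairing at every inner vertex $v$ (from $P_v$) and the resulting out-out pairing (from the extension cases) are in precise correspondence with the two separate constraints on $\mathrm{in}(v)$ and $\mathrm{out}(v)$ in Corollary~\ref{cor:amplevialabeling}.
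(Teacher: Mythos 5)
Your proof is correct and follows essentially the same route as the paper's: the paper's (much terser) argument likewise identifies ample framings with alternating $\{1,2\}$-labelings of the components of the decomposition from Theorem~\ref{thm:ampleexistence} and counts $2^M$ such labelings. Your additional care in verifying that the local condition of Corollary~\ref{cor:amplevialabeling} coincides with alternation along components (via the in-in and out-out pairings forced by the construction), and in noting that source-to-sink edges carry no framing data, fills in details the paper leaves implicit.
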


\begin{proof}
Every ample framing induces a $\{1,2\}$-labeling of the paths and cycles in the decomposition from Theorem~\ref{thm:ampleexistence} that is alternating.
Since there are exactly $2^M$ such labelings, and each of those labelings induces an ample framing, the result follows.
\end{proof}

\begin{example}\label{ex:enumeratefullample}
In the full DAG from Example~\ref{ex:disjointpathscycles}, there are nine disjoint paths and cycles in $G$.
Thus, there are $2^9$ ample framings.
\end{example}

The following corollary enumerates the distinct DKK triangulations of $\calF_1(G)$.

\begin{corollary}
\label{cor:countDKKfull}
Suppose that $G$ is a full DAG and there are $M$ paths and cycles containing at least one inner vertex in the decomposition of $G$ into edge-disjoint even cycles and paths satisfying the conditions in the proof of Theorem~\ref{thm:ampleexistence}.
Then there are $2^{M-1}$ distinct DKK triangulations of $\calF_1(G)$.
\end{corollary}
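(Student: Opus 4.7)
The plan is to show that each DKK triangulation arises from exactly two ample framings---related by globally reversing every linear order---and then divide the count $2^M$ from Theorem~\ref{thm:poweroftwo} by two.  First, given an ample framing $F$, let $F^{op}$ denote the framing obtained by reversing the linear order on both $\mathrm{in}(v)$ and $\mathrm{out}(v)$ for every inner vertex $v$.  I would verify directly from Definition~\ref{def:coherent} that $\DKK(G,F) = \DKK(G,F^{op})$: reversing both orders at $v$ turns the conflict condition $Pv \prec_{F,\In(v)} Qv$ and $vQ \prec_{F,\Out(v)} vP$ into the condition $Qv \prec_{F^{op},\In(v)} Pv$ and $vP \prec_{F^{op},\Out(v)} vQ$, which is the conflict condition for the pair $(Q,P)$ with respect to $F^{op}$.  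Since conflict is a symmetric relation between routes, the coherence relation, and hence the triangulation, is unchanged.

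Next, I would argue that if two ample framings produce the same DKK triangulation, they must be related by this global reversal.  By Definition~\ref{def:cliqueexceptional}, the set $\calE(F)$ of exceptional routes equals the intersection of all maximal simplices of $\DKK(G,F)$, so the triangulation determines $\calE(F)$.  By Theorem~\ref{thm:uniqueexceptional} and Corollary~\ref{cor:amplevialabeling}, $F$ is equivalent to a $\{1,2\}$-labeling of the edges of $G$ such that adjacent edges at an inner vertex carry distinct labels and each exceptional route consists of edges of a common label.  Given $\calE(F)$, the partition of the edges into the two constant-label classes is already determined; the only remaining choice is which class is labeled $1$ and which is labeled $2$.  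Using the alternating $\{1,2\}$-labeling of the path-and-cycle decomposition from Theorem~\ref{thm:ampleexistence} that underlies Theorem~\ref{thm:poweroftwo}, I would identify this single global label-swap with the involution $F \mapsto F^{op}$.

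Combining these two observations, the $2^M$ ample framings of $G$ partition into pairs $\{F, F^{op}\}$, each pair mapping to a single DKK triangulation of $\calF_1(G)$, yielding $2^{M-1}$ distinct triangulations.  The main obstacle is the last step: one must verify that the $\{1,2\}$-edge-labelings associated to $F$ and $F^{op}$ are the only two labelings yielding the exceptional set $\calE(F)$, i.e., that independently flipping labels on a strict subset of the components of the decomposition would necessarily produce a labeling whose constant-label routes form a genuinely different exceptional set.  I would establish this by tracing how the paths and cycles in the decomposition interact at shared sources, sinks, and inner vertices, using the uniqueness in Theorem~\ref{thm:uniqueexceptional} to pin down the labeling compatible with a given exceptional set.
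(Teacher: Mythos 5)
Your argument follows the paper's proof exactly: the published proof consists of precisely your two claims, namely that globally exchanging the labels $1$ and $2$ (your $F\mapsto F^{op}$) preserves the triangulation, and that any two ample framings not related by this exchange have distinct exceptional sets and hence distinct triangulations. Your verification of the first claim, and of the fact that the triangulation determines $\calE(F)$ as the intersection of all maximal simplices, correctly fills in details the paper leaves implicit.

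However, the step you flag as ``the main obstacle'' is a genuine gap, and it is the same gap in the paper's own proof, which simply asserts that no pair of framings other than $\{F,F^{op}\}$ share an exceptional set. That assertion needs an extra hypothesis. Take $G$ with one source $s$, one sink $t$, two inner vertices $v,w$, and doubled edges $s\to v$, $v\to t$, $s\to w$, $w\to t$. This is a full DAG, the decomposition from Theorem~\ref{thm:ampleexistence} has $M=4$ components (the pair of edges into $v$, the pair out of $v$, and likewise for $w$), and there are indeed $2^4=16$ ample framings. But reversing both the in-order and the out-order at $v$ while leaving $w$ untouched flips exactly two of the four components, is not the global swap, and yet preserves every exceptional route and every coherence relation; the DKK triangulation only records, for each of $v$ and $w$, which diagonal of the corresponding square of routes is exceptional, so there are $2\times 2=4$ distinct triangulations rather than $2^{M-1}=8$. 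In general, flipping the labels on a subset $S$ of the components preserves the exceptional set (equivalently, the triangulation) exactly when, for every inner vertex $u$, the component containing $\mathrm{in}(u)$ lies in $S$ if and only if the component containing $\mathrm{out}(u)$ does; so the count $2^{M-1}$ is valid precisely when the auxiliary graph on the $M$ components, with an edge joining the in-component and the out-component of each inner vertex, is connected. Your plan of ``tracing how the paths and cycles interact'' cannot close this gap without adding such a connectivity hypothesis (which does hold for the families, such as $\car(n)$ and $G(k,n)$, treated elsewhere in the paper).
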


\begin{proof}
Given an ample framing $F$ of $G$, exchanging all $1$'s for $2$'s and vice versa yields another ample framing with the same triangulation.
For any other pair of ample framings $F_1$ and $F_2$, the sets of exceptional routes corresponding to $F_1$ and $F_2$ are distinct and hence the DKK triangulations for $F_1$ and $F_2$ are distinct.
\end{proof}

\begin{corollary}\label{cor:poweroftwovalid}
Suppose that $G$ is a valid DAG and $H$ is a full contraction of $G$.
Suppose that there are $M$ paths and cycles containing at least one inner vertex in the decomposition of $H$ into edge-disjoint even cycles and paths satisfying the conditions in the proof of Theorem~\ref{thm:ampleexistence}.
Write $V_1$ for the set of non-source vertices that are endpoints of source-reachable idle edges in $G$, and write $V_2$ for the set of non-sink vertices that are endpoints of sink-reachable idle edges in $G$.
The number of ample framings of $G$ is equal to
\[
2^M\prod_{v\in V_1}|\mathrm{out}(v)|!\prod_{v\in V_2}|\mathrm{in}(v)|! \, .
\]
\end{corollary}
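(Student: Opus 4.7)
Proof plan. The plan is to establish a bijection between the ample framings of $G$ and triples $(F_H, \{\sigma_v\}_{v\in V_1}, \{\tau_v\}_{v\in V_2})$, where $F_H$ is an ample framing of the full contraction $H$, each $\sigma_v$ is a linear order on $\mathrm{out}(v)$, and each $\tau_v$ is a linear order on $\mathrm{in}(v)$. Once this bijection is in place, the claimed formula follows immediately: Theorem~\ref{thm:poweroftwo} provides $2^M$ choices for $F_H$, while the orderings contribute the two products of factorials. These choices are independent, since even when $V_1 \cap V_2 \ne \emptyset$ the orderings $\sigma_v$ and $\tau_v$ live on disjoint edge sets (out-edges versus in-edges).

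To set up the bijection, I would induct on the number of idle edges in $G$, applying Proposition~\ref{prop:validframingexpansion} to a sequence of idle-edge expansions from $H$ to $G$. By Proposition~\ref{prop:idleforest}, each non-source vertex $v$ in a source-reachable idle-edge tree has a unique incoming edge, namely the idle edge from its parent in the tree. The proof of Proposition~\ref{prop:validframingexpansion} then shows that the linear order on $\mathrm{in}(v)$ is trivially forced, while any of the $|\mathrm{out}(v)|!$ linear orders on $\mathrm{out}(v)$ extends $F_H$ to an ample framing of $G$ whose exceptional routes correspond, via Proposition~\ref{prop:validframingexpansion}, to those of $F_H$. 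Symmetric reasoning at each $v \in V_2$ gives $|\mathrm{in}(v)|!$ independent choices. Inner idle-edge expansions, by contrast, introduce no additional freedom because the proof of Proposition~\ref{prop:validframingexpansion} forces the orders on the split vertex to match the ambient framing.

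The main obstacle is verifying the bijection carefully. Injectivity is clear, as the triple directly encodes the complete framing data of $G$ at every inner vertex. For surjectivity, given an ample framing $F$ of $G$, one applies Proposition~\ref{prop:validframingexpansion} iteratively in the contraction direction to produce an ample framing $F_H$ of $H$, and then reads off the orderings $\sigma_v$ and $\tau_v$ from $F$ itself at the vertices of $V_1$ and $V_2$. The subtlest point is confirming that the freedoms at different vertices in $V_1$ (or in $V_2$) are genuinely independent — that a choice of $\sigma_v$ does not constrain $\sigma_{v'}$ for $v \ne v'$ — which follows from the disjointness of the trees in the idle-edge forest given by Proposition~\ref{prop:idleforest} and from the fact that, within a single tree, the out-edge orderings at distinct vertices involve disjoint edge sets.
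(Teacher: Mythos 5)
Your proposal is correct and follows essentially the same route as the paper: both arguments rest on Proposition~\ref{prop:validframingexpansion} to pass between ample framings of $G$ and of its full contraction $H$, observe that the only genuine freedom lies in the out-orders at $V_1$ and in-orders at $V_2$ (the in-orders at $V_1$, out-orders at $V_2$, and all orders at inner idle edges being forced or trivial), and then multiply by the $2^M$ count from Theorem~\ref{thm:poweroftwo}. Your write-up merely makes explicit the bijection and independence checks that the paper's terser proof leaves implicit.
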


\begin{proof}
By Proposition~\ref{prop:validframingexpansion}, any ample framing of $G$ descends to an ample framing of $H$.
Given an ample framing $F$ of $H$, any linear order of the outgoing edges from $V_1$ in $G$ and any linear order of the incoming edges to $V_2$ in $G$ will extend $F$ to an ample framing of $G$. 
Thus, we have that each ample framing of $H$ extends to 
\[
\prod_{v\in V_1}|\mathrm{out}(v)|!\prod_{v\in V_2}|\mathrm{in}(v)|!
\]
ample framings of $G$.
\end{proof}

As an application of Corollary~\ref{cor:poweroftwovalid}, we consider the following class of DAGs.
Note that these DAGs have been previously studied with regard to their flow polytopes; see Remark~\ref{rem:Gkn+k} for details.

\begin{definition}\label{def:gkn}
Let $1\leq k\leq n$ be integers.
Define $G(k,n+1)$ to be the DAG with vertex set $[n+1]$ and directed edges $\{(i,i+1):i\in [n]\}\cup\{(i,i+k):i\in [n-k+1]\}$.
\end{definition}

\begin{example}\label{ex:ampleframingsvalidexample}
Figure~\ref{fig:g310} depicts $G(3,10)$.
A full contraction $H$ is shown in Figure~\ref{fig:g310fullcontraction}.
$H$ admits the following decomposition into four paths and cycles:
\[
141, 15476(10), 165(10), (10)7(10) \, .
\]
Further, in $G(3,10)$ the vertices $2$ and $3$ are source-reachable while $8$ and $9$ are sink-reachable, and the out- and in-degree of each of these, respectively, is equal to $2$.
Thus, by Corollary~\ref{cor:poweroftwovalid}, $G(3,10)$ has $2^42^22^2=2^8$ ample framings.
\end{example}

\begin{figure}
\begin{center}
\begin{tikzpicture}
\begin{scope}[scale=1, xshift=0, yshift=0]
	\vertex[fill,label=below:\footnotesize{$1$}](a1) at (1,0) {};
	\vertex[fill,label=below:\footnotesize{$2$}](a2) at (2,0) {};
	\vertex[fill,label=below:\footnotesize{$3$}](a3) at (3,0) {};
	\vertex[fill,label=below:\footnotesize{$4$}](a4) at (4,0) {};
	\vertex[fill,label=below:\footnotesize{$5$}](a5) at (5,0) {};
	\vertex[fill,label=below:\footnotesize{$6$}](a6) at (6,0) {};
	\vertex[fill,label=below:\footnotesize{$7$}](a7) at (7,0) {};
	\vertex[fill,label=below:\footnotesize{$8$}](a8) at (8,0) {};
	\vertex[fill,label=below:\footnotesize{$9$}](a9) at (9,0) {};
	\vertex[fill,label=below:\footnotesize{$10$}](a10) at (10,0) {};

	\draw[-stealth, thick] (a1) to (a2);
	\draw[-stealth, thick] (a2) to (a3);
	\draw[-stealth, thick] (a3) to (a4);
	\draw[-stealth, thick] (a4) to (a5);
	\draw[-stealth, thick] (a5) to (a6);
	\draw[-stealth, thick] (a6) to (a7);
	\draw[-stealth, thick] (a7) to (a8);
	\draw[-stealth, thick] (a8) to (a9);
	\draw[-stealth, thick] (a9) to (a10);
	\draw[-stealth, thick] (a1) to[out=60,in=120] (a4);
	\draw[-stealth, thick] (a2) to[out=60,in=120] (a5);
	\draw[-stealth, thick] (a3) to[out=60,in=120] (a6);
	\draw[-stealth, thick] (a4) to[out=60,in=120] (a7);
	\draw[-stealth, thick] (a5) to[out=60,in=120] (a8);
	\draw[-stealth, thick] (a6) to[out=60,in=120] (a9);
	\draw[-stealth, thick] (a7) to[out=60,in=120] (a10);
\end{scope}
\end{tikzpicture}
\end{center}
\caption{$G(3,10)$}
\label{fig:g310}
\end{figure}

\begin{figure}
\begin{center}
\begin{tikzpicture}
\begin{scope}[scale=1, xshift=0, yshift=0]
	\vertex[fill,label=below:\footnotesize{$1$}](a1) at (1,0) {};
	\vertex[fill,label=below:\footnotesize{$4$}](a4) at (4,0) {};
	\vertex[fill,label=below:\footnotesize{$5$}](a5) at (5,0) {};
	\vertex[fill,label=below:\footnotesize{$6$}](a6) at (6,0) {};
	\vertex[fill,label=below:\footnotesize{$7$}](a7) at (7,0) {};
	\vertex[fill,label=below:\footnotesize{$10$}](a10) at (10,0) {};

	\draw[-stealth, thick] (a1) to (a4);
	\draw[-stealth, thick] (a4) to (a5);
	\draw[-stealth, thick] (a5) to (a6);
	\draw[-stealth, thick] (a6) to (a7);
	\draw[-stealth, thick] (a7) to (a10);
	\draw[-stealth, thick] (a1) to[out=60,in=120] (a4);
	\draw[-stealth, thick] (a1) to[out=60,in=120] (a5);
	\draw[-stealth, thick] (a1) to[out=60,in=120] (a6);
	\draw[-stealth, thick] (a4) to[out=60,in=120] (a7);
	\draw[-stealth, thick] (a5) to[out=60,in=120] (a10);
	\draw[-stealth, thick] (a6) to[out=60,in=120] (a10);
	\draw[-stealth, thick] (a7) to[out=60,in=120] (a10);
\end{scope}
\end{tikzpicture}
\end{center}
\caption{The graph $H$ obtained as a full contraction of $G(3,10)$.}
\label{fig:g310fullcontraction}
\end{figure}
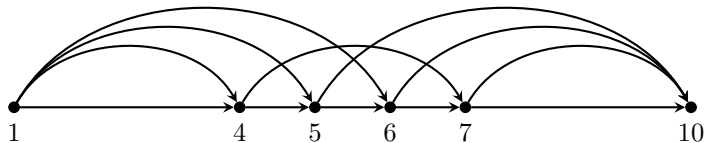

\begin{theorem}
The number of ample framings of $G(k,n+1)$ is:
\begin{itemize}
    \item $4^{n-k}$ for $n=k+1,\ldots,2k-1$,
    \item $2^n$ for $n=2k+1,\ldots,3k-1$, and
    \item $2^{3k-1}$ for $n\geq 3k$.
\end{itemize}
\end{theorem}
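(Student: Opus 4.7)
The plan is to apply Corollary~\ref{cor:poweroftwovalid} to $G(k,n+1)$ in each of the three $n$-ranges, identifying the sets $V_1, V_2$ of non-source/non-sink endpoints of source-reachable and sink-reachable idle edges in $G(k,n+1)$, and then computing the number $M$ of paths and cycles (with at least one inner vertex) in the decomposition of the full contraction $H$ from Theorem~\ref{thm:ampleexistence}.

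For the first step, I would observe that in $G(k,n+1)$ an inner vertex $v$ has in-degree $2$ iff $v \geq k+1$ and out-degree $2$ iff $v \leq n-k+1$, that no long edge is idle, and that a chain edge $(v,v+1)$ is idle iff $v \leq k-1$ or $v \geq n-k+2$. Tracing maximal chains of idle edges from the source and into the sink then yields: for $n \leq 2k-2$ every chain edge is idle, so $V_1 = \{2,\ldots,n+1\}$ and $V_2 = \{1,\ldots,n\}$, with each of $\prod_{V_1}|\mathrm{out}(v)|!$ and $\prod_{V_2}|\mathrm{in}(v)|!$ equal to $2^{n-k}$ (one factor of $2$ per inner vertex of the relevant degree); for $n \geq 2k-1$ the source-reachable chain stops at $(k-1,k)$ and the sink-reachable chain starts at $(n-k+2,n-k+3)$, giving $V_1 = \{2,\ldots,k\}$ and $V_2 = \{n-k+2,\ldots,n\}$ with each product equal to $2^{k-1}$. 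For Case~1 ($n \leq 2k-1$), the full contraction $H$ merges every inner vertex into the source or sink cluster, leaving only parallel source-to-sink edges; thus $M = 0$ and the total is $4^{n-k}$.

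For Cases~2 and~3 ($n \geq 2k+1$) the inner vertices of $H$ are $\{k+1,\ldots,n-k+1\}$, and I would compute $M$ by passing to the auxiliary graph $G^*$ whose vertices are the edges of $H$ and whose edges record the in-pair and out-pair at each inner vertex of $H$. The components of $G^*$ biject with the paths and cycles of the decomposition, and its degree-$1$ vertices are exactly the (source,~inner) and (inner,~sink) edges of $H$, which one counts to be $2(n-2k+2)$ in Case~2 and $2(k+1)$ in Case~3. The main obstacle is to show that $G^*$ is acyclic, so that $M$ equals half the number of degree-$1$ vertices. For this I plan a monotonicity argument on the sum $u+v$ of the endpoints of the current edge: each in-pair step at a vertex $v$ swaps the chain in-edge $(v-1,v)$ with the long in-edge $(v-k,v)$, changing $u$ by $\pm(k-1)$; each out-pair step at $u$ similarly changes $v$ by $\pm(k-1)$; and since each step forces the type (chain versus long) of the current edge to swap while the walk alternates between in-pairs and out-pairs, a direct case check shows that the sign of the change in $u+v$ is constant along the walk. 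Hence $u+v$ is strictly monotone, so no walk closes into a cycle, and therefore $M = n-2k+2$ in Case~2 and $M = k+1$ in Case~3. Corollary~\ref{cor:poweroftwovalid} then gives $2^{n-2k+2} \cdot 2^{2(k-1)} = 2^n$ and $2^{k+1}\cdot 2^{2(k-1)} = 2^{3k-1}$ as claimed.
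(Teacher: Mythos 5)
Your proposal is correct and follows the same overall strategy as the paper: both proofs run everything through Corollary~\ref{cor:poweroftwovalid}, identify the same idle chain edges (giving $V_1=\{2,\dots,k\}$, $V_2=\{n-k+2,\dots,n\}$ and the factor $2^{k-1}\cdot 2^{k-1}$ when $n\ge 2k-1$, and the degenerate all-idle picture with $M=0$ when $n\le 2k-1$), and arrive at the same values $M=n-2k+2$ and $M=k+1$ in the last two cases. Where you genuinely diverge is in how $M$ is justified. The paper simply writes out the path/cycle decomposition of the full contraction $H$ explicitly in the range $2k\le n\le 3k-1$ (a list of $n-2k+2$ alternating cycles through the source and sink clusters) and asserts that the case $n\ge 3k$ is ``straightforward to verify.'' You instead introduce the auxiliary pairing graph $G^*$ on the edges of $H$, count its degree-one vertices as the (source, inner) and (inner, sink) edges, and prove acyclicity via the strict monotonicity of $u+v$ along any non-backtracking walk (each step changes $u+v$ by $\pm(k-1)$ with a sign that a short case check shows is constant, because in-pair and out-pair steps alternate and each step toggles chain versus long). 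This buys a uniform argument covering both ranges at once and actually supplies the verification the paper omits for $n\ge 3k$; the paper's explicit listing is more concrete but less systematic. The one caveat is that your monotonicity argument (and the chain-versus-long dichotomy itself) requires $k\ge 2$ so that $k-1>0$; this hypothesis is implicit in the theorem as stated (the first range is empty for $k=1$, and for $k=1$ the graph $G(1,n+1)$ degenerates to doubled edges), so it is a shared, not a new, restriction, but it would be worth stating explicitly.
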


\begin{proof}
Suppose that $n\leq 2k-1$.
Then every edge of the form $(i,i+1)$ in $G(k,n+1)$ is an idle edge, except for the case where $n=2k-1$, in which case $G(k,2k)$ has a single non-idle edge from $k$ to $k+1$.
For any of these DAGs, the full contraction of $G$ has a single source, a single sink, and multiedges between these.
In this case, $M=0$ and Corollary~\ref{cor:poweroftwovalid} yields $2^{n-k}2^{n-k}$ ample framings of $G(k,n+1)$.

Suppose next that $n=2k,\ldots,3k-1$.
In this case, there are $2k-2$ idle edges having the form $(i,i+1)$ for $i=1,\ldots,k-1$ and $i=n-k+2,\ldots,n$. 
Suppose $H$ is the full contraction of $G(k,n+1)$ obtained by contracting these edges.
See for example Figure~\ref{fig:g411fullcontraction} illustrating $G(4,11)$ where $k=4$ and $n=10$.
In this case, the disjoint path and cycle decomposition of $H$ consists of the cycles
\begin{align*}
& 1(k+1)1\\
& (n+1)(k+1)(k+2)1\\
& (n+1)(k+2)(k+3)1\\ & (n+1)(k+3)(k+4)1\\
& \hspace{2.25cm} \vdots \\
& (n+1)(n-k)(n-k+1)1\\
& (n+1)(n-k+1)(n+1) \, ,
\end{align*}
and thus $M=n-2k+2$.
Hence, by Corollary~\ref{cor:poweroftwovalid}, the total number of ample framings is $2^{n-2k+2}2^{k-1}2^{k-1}=2^n$.

Finally, suppose that $n\geq 3k$.
In this case, there are again $2k-2$ idle edges, and we assume $H$ is the result of fully contracting these.
See for example Figure~\ref{fig:g310fullcontraction}, which shows a full contraction of $G(3,10)$.
In this case, it is straightforward to verify that the disjoint path and cycle decomposition contains $k+1$ paths and cycles.
Thus, the total number of ample framings is $2^{k+1}2^{k-1}2^{k-1}=2^{3k-1}$.
\end{proof}

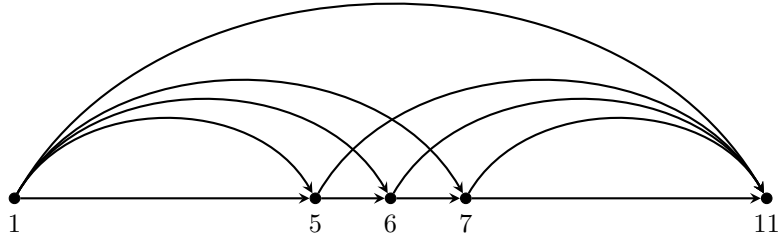
\begin{figure}
\begin{center}
\begin{tikzpicture}
\begin{scope}[scale=1, xshift=0, yshift=0]
	\vertex[fill,label=below:\footnotesize{$1$}](a1) at (1,0) {};
	\vertex[fill,label=below:\footnotesize{$5$}](a5) at (5,0) {};
	\vertex[fill,label=below:\footnotesize{$6$}](a6) at (6,0) {};
	\vertex[fill,label=below:\footnotesize{$7$}](a7) at (7,0) {};
	\vertex[fill,label=below:\footnotesize{$11$}](a11) at (11,0) {};

	\draw[-stealth, thick] (a1) to (a5);
	\draw[-stealth, thick] (a5) to (a6);
	\draw[-stealth, thick] (a6) to (a7);
	\draw[-stealth, thick] (a7) to (a11);
	\draw[-stealth, thick] (a1) to[out=60,in=120] (a5);
	\draw[-stealth, thick] (a1) to[out=60,in=120] (a6);
	\draw[-stealth, thick] (a1) to[out=60,in=120] (a7);
	\draw[-stealth, thick] (a1) to[out=60,in=120] (a11);
	\draw[-stealth, thick] (a5) to[out=60,in=120] (a11);
	\draw[-stealth, thick] (a6) to[out=60,in=120] (a11);
	\draw[-stealth, thick] (a7) to[out=60,in=120] (a11);
\end{scope}
\end{tikzpicture}
\end{center}
\caption{The graph $H$ obtained as a full contraction of $G(4,11)$.}
\label{fig:g411fullcontraction}
\end{figure}


\section{Bijection between $KQ/I$-modules and Non-exceptional Routes}\label{sec:bijection}

\subsection{Background on Path Algebras and Gentle Algebras}

A \emph{quiver} $Q=(Q_0, Q_1)$ is a finite directed graph where $Q_0$ denotes the set
of vertices and $Q_1$ denotes the set of arrows in $Q$.
Given an arrow $\alpha\in Q_1$, its starting and ending vertices are denoted by $s(\alpha)$, $t(\alpha)$ respectively, where $s(\alpha)\xrightarrow{\alpha} t(\alpha)$.
A \emph{path} of length $n$ in $Q$ is a composition of arrows $\alpha_1\alpha_2\cdots\alpha_n$ such that $t(\alpha_i)=s(\alpha_{i+1})$ for all $i = 2, \dots, n-1$.
In addition, for each vertex $i\in Q_0$, we define a \emph{constant path}, denoted $\varepsilon_i$, with $s(\varepsilon_i)=t(\varepsilon_i)=i$.
Constant paths are said to be of length zero. 

Let $K$ be an algebraically closed field.
The \emph{path algebra} over a quiver $Q$, denoted by $KQ$, is the $K$-algebra with basis given by the set of all paths in $Q$.  Moreover, multiplication is defined as concatenation of paths in $Q$.  

\begin{example}
Let $Q = 1\xrightarrow{\alpha}2 \xrightarrow{\beta} 3$ be a quiver.  Then the path algebra $KQ$ is a $K$-vector space with bases $\{\varepsilon_1, \varepsilon_2, \varepsilon_3, \alpha, \beta, \alpha\beta\}$.  Every constant path $\varepsilon_i$ is an idempotent of $KQ$, hence $\varepsilon_i^2=\varepsilon_i$.  Moreover, the product $\alpha\cdot \beta = \alpha\beta$ while $\beta\cdot \alpha=0$ since this does not correspond to a path in $Q$.  Similarly, we have $\varepsilon_1\cdot \alpha=\alpha$ while $\varepsilon_2\cdot\varepsilon_1=0$.  
\end{example}

Given an ideal $I$ of $KQ$, consisting of paths of length at least 2, we can also consider a quotient of the path algebra $KQ/I$.  Next, we define a special class of such algebras called gentle, which were originally introduced and studied in \cite{AssSko}.

\begin{definition}
\label{def:gentle}
Let $I$ be a monomial ideal, then we say that a finite dimensional algebra $\Lambda = KQ/I$ is a \emph{gentle algebra} if it satisfies the following properties:
\begin{itemize}
\item[(a)] for any vertex $i\in Q_0$, there are at most two incoming and at most two outgoing arrows,
\item[(b)] for any arrow $\alpha \in Q_1$, there is at most one arrow $\beta$ and at most one arrow $\gamma$ such that $\alpha\beta\not\in I$ and $\gamma\alpha\not\in I$,
\item[(c)] for each arrow $\alpha \in Q_1$, there is at most one arrow $\beta$ and at most one arrow $\gamma$ such that $0\not=\alpha\beta\in I$ and $0\not=\gamma\alpha\in I$,
\item[(d)] there exists a generating set for the ideal $I$ consisting of a finite set of paths of length two.
\end{itemize}
\end{definition}

Gentle algebras are especially nice, because their module categories are well-understood in terms of walks in the quiver \cite{ButlerRingel}, which we describe below.  Note that we are only considering finite dimensional modules here. 

Let $\Lambda = KQ/I$ be a gentle algebra. We formally define $Q_1^{-1}$ to be the set of inverse arrows of $Q$. Elements of $Q_1^{-1}$ are denoted by $\alpha^{-1}$, for $\alpha\in Q_1$, and $s(\alpha^{-1})   \coloneqq t(\alpha)$ and $t(\alpha^{-1}) \coloneqq s(\alpha)$. A \emph{string}, or equivalently a \emph{walk}, in $\Lambda$ of length $n$ is a word $w = \alpha_1^{t_1} \cdots \alpha_n^{t_n}$ in the alphabet $Q_1 \cup Q_1^{-1}$ with $t_i \in \{\pm1\}$, for all $i \in \{1,2,\dots ,n\}$, which satisfies the following conditions:
\begin{itemize}
\item[(a)] $t(\alpha_i^{t_i})=s(\alpha_{i+1}^{t_i+1})$ and $\alpha_{i+1}^{t_i+1}\not=\alpha_i^{-t_i}$, for all $i\in \{1,\dots ,n-1\}$,
\item[(b)] $w$ and also $w^{-1} \coloneqq \alpha_n^{-t_n}\cdots \alpha_1^{-t_1}$ do not contain a subpath in $I$. 
\end{itemize}
We refer to the symbols $\alpha_i$ and $\alpha^{-1}_j$ appearing in some string $w$ as \emph{arrows} and \emph{inverse arrows} of $w$.  In the case that $\alpha_i^{t_i}$ has $t_i = 1$, we will simply write $\alpha_i$. The constant path $\varepsilon_i$ of length zero is also considered to be a string.  Moreover, we consider strings up to the equivalence relation where a string $w$ is identified with $w^{-1}$.

We say that $v$ is a \emph{substring} of $w$ if $v=\alpha_i^{t_i}\cdots \alpha_j^{t_j}$ for some $1\leq i\leq j\leq n$ or if $v=\varepsilon_i$ for some  vertex $i \in Q_0$ through which $w$ passes. 
We say $w$ \emph{starts} at $s(w) = s(\alpha_1^{t_1})$ and \emph{ends} at $t(w) = t(\alpha_n^{t_n})$. 
Moreover, a string is called \emph{directed} if $t_i=1$ for all $i\in \{1, \dots, n\}$ or $t_i=-1$ for all $i\in \{1, \dots, n\}$.

For a gentle algebra $\Lambda$, there is an indecomposable $\Lambda$-module $M(w)$ associated to every string $w$.  Moreover, $\Lambda$ is of \emph{finite representation type}, meaning that there are only finitely many indecomposable $\Lambda$-modules up to isomorphism whenever there are only finitely many strings for $\Lambda$.  In this case, there is a bijection between strings $w$ and indecomposable $\Lambda$-modules $M(w)$ up to isomorphism.

\subsection{The Bijection}

In this section we describe a bijection between non-exceptional routes in a full DAG $G$ and indecomposable modules over certain gentle algebras.  As a consequence, we show that the dual graph of the DKK triangulation has a poset structure coming from the $\tau$-tilting poset of the associated gentle algebra. 

Let $G$ be a full DAG with a fixed ample framing.   
By Corollary~\ref{cor:amplevialabeling}, the framing induces a labeling of the edges of $G$ by either 1 or 2 as stated in Equation~\eqref{eq:weight}.
We refer to this labeling as the \emph{weight function} for the framing.

\begin{definition}
Let $R$ be a route in $G$ so that $R=(e_1,e_2,\ldots, e_k)$ is an ordered set of edges.  Define the weight of $R$ by $\omega(R) = (\omega(e_1), \omega(e_2),\ldots, \omega(e_k)) \in \{1,2\}^k$.
\end{definition}
Note this means that $R$ is exceptional if and only if $\omega(R)$ is a vector of all ones or all twos.
Next, we define a path algebra over a quiver with relations coming from $G$.
In the context of our work, every full DAG has an associated quiver with an associated algebra.

\begin{definition}\label{def:quiver}
Let $G$ be a full DAG with a fixed ample framing.
Define a quiver $Q$ whose vertices are the inner vertices of $G$ and whose arrows come from directed edges $e=(v_1,v_2)$ in $G$ such that 
\[\begin{cases}
\xymatrix{v_1 \ar[r]^e & v_2} \hbox{ in } Q, & \hbox{if $\omega(e) =1$}, \\
\xymatrix{v_1 & \ar[l]_e v_2} \hbox{ in } Q, & \hbox{if $\omega(e)=2$}.
\end{cases}
\]
Furthermore, define a set of relations on the path algebra $KQ$ as follows.  Let $I$ be the ideal of $KQ$ generated by all paths $e_1e_2: \xymatrix{v_1 \ar[r]^{e_1} & v_2 \ar[r]^{e_2} & v_3}$ in $Q$ such that $\omega(e_1) \neq \omega(e_2)$.
This defines a \emph{path algebra with relations} $\Lambda(G):=KQ/I$.
\end{definition}

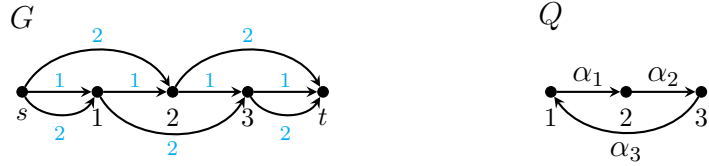
\begin{figure}
\begin{center}
    \begin{tikzpicture}
\begin{scope}[scale=1, xshift=0, yshift=0]
	\vertex[fill,label=below:\footnotesize{$s$}](as) at (1,0) {};
	\vertex[fill,label=below:\footnotesize{$1$}](a1) at (2,0) {};
	\vertex[fill,label=below:\footnotesize{$2$}](a2) at (3,0) {};
	\vertex[fill,label=below:\footnotesize{$3$}](a3) at (4,0) {};
	\vertex[fill,label=below:\footnotesize{$t$}](at) at (5,0) {};
	
	\draw[-stealth, thick] (as)--(a1);
	\draw[-stealth, thick] (a1)--(a2);
	\draw[-stealth, thick] (a2)--(a3);
	\draw[-stealth, thick] (a3)--(at);

	\draw[-stealth, thick] (as) to[out=60,in=120] (a2);
	\draw[-stealth, thick] (a2) to[out=60,in=120] (at);

	\draw[-stealth, thick] (as) to[out=-60,in=-120] (a1);
	\draw[-stealth, thick] (a1) to[out=-60,in=-120] (a3);
	\draw[-stealth, thick] (a3) to[out=-60,in=-120] (at);
	
    \node[] at (1.5,0.15) {\textcolor{cyan}{\tiny$1$}}; 
    \node[] at (2.5,0.15) {\textcolor{cyan}{\tiny$1$}}; 
    \node[] at (3.5,0.15) {\textcolor{cyan}{\tiny$1$}}; 
    \node[] at (4.5,0.15) {\textcolor{cyan}{\tiny$1$}}; 
    
    \node[] at (2,0.75) {\textcolor{cyan}{\tiny$2$}}; 
    \node[] at (4,0.75) {\textcolor{cyan}{\tiny$2$}}; 
    
    \node[] at (1.5,-0.55) {\textcolor{cyan}{\tiny$2$}}; 
    \node[] at (3, -0.75)  {\textcolor{cyan}{\tiny$2$}}; 
    \node[] at (4.5,-0.55) {\textcolor{cyan}{\tiny$2$}}; 
    
    \node[] at (1,1) {$G$};

\end{scope}

\begin{scope}[scale=1, xshift=200, yshift=0]
	\vertex[fill,label=below:\footnotesize{$1$}](a1) at (1,0) {};
	\vertex[fill,label=below:\footnotesize{$2$}](a2) at (2,0) {};
	\vertex[fill,label=below:\footnotesize{$3$}](a3) at (3,0) {};

	\draw[-stealth, thick] (a1)--(a2);
	\draw[-stealth, thick] (a2)--(a3);
	\draw[-stealth, thick] (a3) to[out=-120,in=-55] (a1);
	
    \node[] at (1.5,0.2) {$\alpha_1$};
    \node[] at (2.5,0.2) {$\alpha_2$};
    \node[] at (2,-0.8) {$\alpha_3$};
    
    \node[] at (1,1) {$Q$};

\end{scope}
\end{tikzpicture}
\end{center}
    \caption{The full DAG \(G\) is the complete contraction of \(G(2,7)\) with an ample framing given by the labeling on the edges. 
    \(Q\) is the associated quiver.}
    \label{fig:G(2,7) and Q}
\end{figure}

\begin{example}\label{ex:path algebra of contracted G(2,7)}
A framed full DAG \(G\) and its associated quiver \(Q\) are shown in Figure~\ref{fig:G(2,7) and Q}. 
The ideal \(I\) of \(KQ\) is generated by the relations \(\alpha_2\alpha_3,\, \alpha_3\alpha_1\). 
So the path algebra with relations for $G$ is 
\[
\Lambda(G)=\mathrm{span}_K\{\varepsilon_1,\varepsilon_2, \varepsilon_3, \alpha_1, \alpha_2, \alpha_3, \alpha_1\alpha_2\}\, ,
\]
where \(\varepsilon_i\) denotes the constant path at vertex \(i\).
\end{example}

\begin{proposition}
    Let $G$ be a full DAG with a fixed ample framing, then the algebra $\Lambda(G)$ is gentle.
\end{proposition}

\begin{proof}
    It suffices to check that the algebra $\Lambda(G)=KQ/I$ satisfies the conditions (a)--(d) of Definition~\ref{def:gentle}.  By construction, the ideal $I$ is generated by paths of length two, so $I$ is a monomial ideal and satisfies condition (d).  Since $G$ is full, there are at most two arrows starting and ending at every vertex of $Q$, which implies condition (a).  Lastly, conditions (b) and (c) are satisfied due to the properties of weights around every inner vertex of $G$.  
\end{proof}

\begin{definition}
\label{def:gentle-indecomposible}

For $G$ a full DAG, let the set of \emph{shifted projective $\Lambda(G)$-modules} be $\{ P_i[1] : i \text{ is a vertex in } Q\}$ and let $\text{ind}\,\Lambda$ denote the set of \emph{indecomposable $\Lambda(G)$-modules} up to isomorphisms.
We further define
\[
\mathcal{T}(\Lambda(G)):=\text{ind}\,\Lambda\cup \{P_i[1] : i\in Q_0\}\, .
\]
\end{definition}

The set $\mathcal{T}(\Lambda(G))$ can be identified with a set of complexes in the derived category of the module category of $\Lambda(G)$.
Here shifted projectives can be thought of as complexes of projective modules concentrated in degree 1, while $\text{ind}\,\Lambda$ can be viewed as complexes of $\Lambda(G)$-modules concentrated in degree 0. 

\begin{theorem}\label{routes-modules}
Let $G$ be a full DAG with a fixed ample framing, and let $\calR(G)$ denote the set of non-exceptional routes in $G$.
There is a bijection
\[\phi: \calR(G) \longrightarrow \mathcal{T}(\Lambda(G))\, .\] 
\end{theorem}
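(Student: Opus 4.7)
The plan is to construct $\phi:\calR(G)\to\calT(\Lambda(G))$ explicitly via an inner walk in $Q$, then prove it is bijective by constructing an inverse.

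Given a non-exceptional route $R=(e_1,\ldots,e_k)$ with inner vertices $v_1,\ldots,v_{k-1}$, I will define the \emph{inner walk} $w(R)$ by reading the inner edges $e_2,\ldots,e_{k-1}$ in order: record $e_i$ as a forward symbol if $\omega(e_i)=1$ (so the associated $Q$-arrow points along the traversal direction in $R$) and as an inverse symbol if $\omega(e_i)=2$ (so it points against the traversal direction). In the degenerate case $k=2$, set $w(R)=\varepsilon_{v_1}$. I would then verify the three string conditions for $\Lambda(G)$: composability at each shared inner vertex is automatic, distinct consecutive edges of $R$ in $G$ preclude immediate backtracking, and a forward composable subpath of $w(R)$ or of $w(R)^{-1}$ corresponds to two consecutive inner edges of $R$ of equal weight, hence has symbols of equal weight and is not a generator of $I$. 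Peaks ($t_i=1$, $t_{i+1}=-1$) and valleys ($t_i=-1$, $t_{i+1}=1$) produce no composable forward subpath and so automatically avoid $I$.

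Different non-exceptional routes may share the same inner walk $w$, differing only in the boundary weights $\omega(e_1)$ and $\omega(e_k)$. Each boundary weight either agrees with the direction of $w$ at the adjacent inner vertex or induces a peak/valley there, giving up to four boundary configurations per walk; some are unrealizable when the required boundary edge does not exist in $G$ or when the resulting route would be exceptional. The map $\phi$ sends the configuration with turns at both boundaries to the indecomposable module $M(w)$, and each configuration with a turn at exactly one boundary to the shifted projective $P_v[1]$ anchored at the vertex $v$ where no turn occurs. In the degenerate case, the two length-two non-exceptional routes through a single inner vertex $v$ are matched with $M(\varepsilon_v)=S_v$ and $P_v[1]$ respectively, according to whether the boundary turn occurs on the source or sink side. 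The inverse is then built by reading arrow directions back into edge weights and using the indecomposable-versus-shifted-projective datum to fix the boundary weights; Theorem~\ref{thm:uniqueexceptional} produces a unique non-exceptional route of $G$ realizing these weights.

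The main obstacle is showing this boundary stratification is exhaustive and non-redundant, so that every element of $\calT(\Lambda(G))$ is hit exactly once. I would handle this by a local analysis at each endpoint of $w$, using fullness (Lemma~\ref{lem:amplemeansfull}) and the uniqueness of exceptional routes through each edge (Theorem~\ref{thm:uniqueexceptional}) to enumerate the realizable boundary configurations and verify the counts match across walks; a case split on whether $w$ is monotone (all forward or all inverse) or contains an internal turn should control the two boundary bits independently. A secondary subtlety is that $Q$ may contain directed cycles, raising the spectre of band modules in $\Lambda(G)$; I expect the strings arising from non-exceptional routes of $G$ to be of finite length, since a cyclic string would correspond to a periodic pattern in $R$ that cannot close up inside the acyclic graph $G$.
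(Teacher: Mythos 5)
Your overall strategy (attach a string in $Q$ to each non-exceptional route, then invert) matches the paper's, but the specific assignment you propose is not a bijection, and the failure is exactly at the point you flag as ``the main obstacle.'' The string the paper attaches to $R=(e_1,\dots,e_k)$ is \emph{not} the full inner walk $e_2\cdots e_{k-1}$: writing $\omega(R)=(1,\dots,1,2=\omega(e_i),\dots,\omega(e_j)=1,2,\dots,2)$, it is only the sub-walk $e_{i+1}\cdots e_{j-1}$ strictly between the \emph{first} weight-$2$ edge and the \emph{last} weight-$1$ edge (with $w=\varepsilon_{s(e_i)}$ when $j=i+1$), and routes of shape $(1^a,2^b)$ go to $P_i[1]$ with $i$ the unique ``peak'' vertex. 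Your classification by turns at the two boundary positions of the inner walk cannot reproduce this, because it does not distinguish peak turns ($1$ followed by $2$) from valley turns ($2$ followed by $1$), and it anchors shifted projectives at the wrong vertex.

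Concretely, in the framed DAG of Figure~\ref{fig:G(2,7) and Q}: the routes with weight sequences $(1,1,2)$ (through vertices $1,2$), $(1,1,1,2)$ (through $1,2,3$), and $(2,2,1)$ (through $1,3$) each have a turn at exactly one boundary with the no-turn endpoint equal to vertex $1$, so your map sends all three to $P_1[1]$; injectivity fails. Surjectivity fails as well: you propose that $M(\varepsilon_v)=S_v$ is hit only by a length-two route through $v$, but this graph has no non-exceptional length-two routes at all, so $S_1,S_2,S_3$ are never in the image, whereas the paper obtains each $S_v$ from a longer route whose first weight-$2$ edge and last weight-$1$ edge meet at $v$ (e.g.\ the route with weights $(2,1,2)$ through vertices $1,2$ maps to $S_1$). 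Since $|\calR(G)|=|\calT(\Lambda(G))|=10$ here, these collisions and omissions show the proposed map is genuinely wrong rather than merely differently normalized. The repair is to replace the inner walk by the paper's truncated string and to anchor $P_i[1]$ at the peak vertex of the routes with monotone weight pattern $(1^a,2^b)$; your observation that strings of $\Lambda(G)$ correspond to paths of inner edges in $G$, and your dismissal of band modules via acyclicity of $G$, are both fine and reusable.
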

\begin{proof}
First define a map $\phi: \calR(G)\rightarrow \mathcal{T}(\Lambda(G))$.
Let $R=(e_1,\ldots, e_k)$ be a non-exceptional route in $G$.
If 
\[\omega(R) = (\underbrace{1,\ldots,1=\omega(e_r)}_a, \underbrace{\omega(e_{r+1})=2,\ldots,2}_b)
\] with $a,b\geq1$, then define $\phi(R)$ to be the shifted projective $P_i[1]$ where $i$ is the head of $e_r$ and tail of $e_{r+1}$.
If 
\[\omega(R) = (\underbrace{1,\ldots,1}_a, 2=\omega(e_i), \omega(e_{i+1}), \ldots, \omega(e_{j-1}), \omega(e_j)=1, \underbrace{2,\ldots,2}_b)
\] 
with $a,b\geq0$, then associate to $R$ the string $w=e_{i+1}^{t_{i+1}}\dots e_{j-1}^{t_{j-1}}$ in $Q$, where $t_r=1$ if $\omega(e_r)=1$ and $t_{r}=-1$ if $\omega(e_r)=2$ for $r \in \{ i+1, \dots, j-1\}$.  Note that if $j=i+1$ then in $Q$ the arrows $e_i, e_{i+1}$ start at the same vertex $s(e_i)$, and we set $w=\varepsilon_{s(e_i)}$ to be the constant path at this vertex. 
Let $M(w)$ denote the corresponding indecomposable $\Lambda(G)$-module.
Note that the module $M(w)$ is well-defined because a route in $G$ does not contain incident edges $e_r$ and $e_{r+1}$ such that the path $e_r e_{r+1}$ in $Q$ belongs to the ideal $I$.

Now consider a map $\psi: \mathcal{T}(\Lambda(G))\rightarrow \calR(G)$ such that
$\psi(P_i[1])$ is the unique route in $G$ that passes through vertex $i$ whose edges preceeding $i$ have weight $1$ and edges following $i$ have weight $2$.  If $M(w)$ is an indecomposable string $\Lambda(G)$-module with $w=e_{i+1}^{t_{i+1}}\ldots e_{j-1}^{t_{j-1}}$ and $j-1\geq i+1$, then the string does not pass through any relations of $Q$; hence $e_{i+1}, \ldots, e_{j-1}$ is a path in $G$.  Define $\psi(M(w))$ to be the unique route in $G$ with weight $(1,\ldots, 1,2= \omega(e_i),\omega(e_{i+1}),\ldots,\omega(e_{j-1}), \omega(e_j)=1,2\ldots, 2)$.  Lastly, if $M(w) = M(\varepsilon_r)$ then define $\psi(M(w))$ to be the unique route in $G$ with weight $(1,\ldots, 1,2= \omega(e_i), \omega(e_{i+1})=1,2\ldots, 2)$ such that the tail of $e_i$ in $G$ is the vertex $r$. 

The maps $\phi$ and $\psi$ are inverses, and this proves the theorem.
\end{proof}

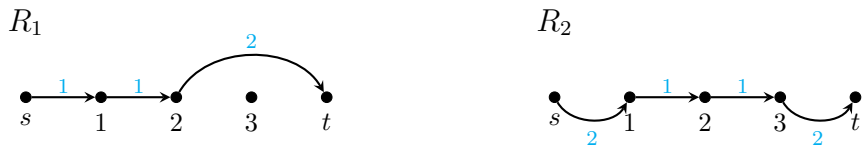
\begin{figure}
\begin{center}
    \begin{tikzpicture}
    
    \begin{scope}[scale=1, xshift=0, yshift=0]
	\vertex[fill,label=below:\footnotesize{$s$}](as) at (1,0) {};
	\vertex[fill,label=below:\footnotesize{$1$}](a1) at (2,0) {};
	\vertex[fill,label=below:\footnotesize{$2$}](a2) at (3,0) {};
	\vertex[fill,label=below:\footnotesize{$3$}](a3) at (4,0) {};
	\vertex[fill,label=below:\footnotesize{$t$}](at) at (5,0) {};
	
	\draw[-stealth, thick] (as)--(a1);
	\draw[-stealth, thick] (a1)--(a2);

	\draw[-stealth, thick] (a2) to[out=60,in=120] (at);

	
    \node[] at (1.5,0.15) {\textcolor{cyan}{\tiny$1$}}; 
    \node[] at (2.5,0.15) {\textcolor{cyan}{\tiny$1$}}; 
    
    \node[] at (4,0.75) {\textcolor{cyan}{\tiny$2$}}; 
    
    
    \node[] at (1,1) {$R_1$};

    \end{scope}

    \begin{scope}[scale=1, xshift=200, yshift=0]
	\vertex[fill,label=below:\footnotesize{$s$}](as) at (1,0) {};
	\vertex[fill,label=below:\footnotesize{$1$}](a1) at (2,0) {};
	\vertex[fill,label=below:\footnotesize{$2$}](a2) at (3,0) {};
	\vertex[fill,label=below:\footnotesize{$3$}](a3) at (4,0) {};
	\vertex[fill,label=below:\footnotesize{$t$}](at) at (5,0) {};
	
	\draw[-stealth, thick] (a1)--(a2);
 	\draw[-stealth, thick] (a2)--(a3);


 	\draw[-stealth, thick] (as) to[out=-60,in=-120] (a1);
 	\draw[-stealth, thick] (a3) to[out=-60,in=-120] (at);
	
     \node[] at (2.5,0.15) {\textcolor{cyan}{\tiny$1$}}; 
     \node[] at (3.5,0.15) {\textcolor{cyan}{\tiny$1$}}; 
    
    
     \node[] at (1.5,-0.55) {\textcolor{cyan}{\tiny$2$}}; 
     \node[] at (4.5,-0.55) {\textcolor{cyan}{\tiny$2$}}; 
    
    \node[] at (1,1) {$R_2$};

    \end{scope}    

    \end{tikzpicture}    
\end{center}
    \caption{Two non-exceptional routes in the DAG $G$ from Figure~\ref{fig:G(2,7) and Q}, with \(R_1\) corresponding to the shifted projective module \(P_2[1]\), and \(R_2\) corresponding to the indecomposable \(\Lambda(G)\)-module \(M(\alpha_1)\).}
    \label{fig:non-exceptionals of G(2,7)}
\end{figure}

Figure~\ref{fig:non-exceptionals of G(2,7)} provides examples of the correspondence between routes and modules as given in Theorem~\ref{routes-modules}.
This correspondence gives rise to the following corollary which determines the representation type of the path algebra for full DAGs.

\begin{corollary}\label{cor:finite}
Every indecomposable $\Lambda(G)$-module has dimension at most one at every vertex.  In particular, the algebra $\Lambda(G)$ is of finite representation type.
\end{corollary}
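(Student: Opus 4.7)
The plan is to derive both statements as direct consequences of the bijection in Theorem~\ref{routes-modules} together with the fact that routes in a DAG are simple paths. Because $\mathcal{T}(\Lambda(G))$ contains every isomorphism class of indecomposable $\Lambda(G)$-module (the shifted projectives $P_i[1]$ play no role here, since they are not modules), it suffices to examine the image of $\phi$ restricted to those non-exceptional routes $R$ whose weight vector changes sign more than once, i.e., routes of the form
\[
\omega(R) = (\underbrace{1,\ldots,1}_a, 2=\omega(e_i), \omega(e_{i+1}), \ldots, \omega(e_{j-1}), \omega(e_j)=1, \underbrace{2,\ldots,2}_b)
\]
with $a,b \geq 0$, together with the degenerate case producing constant-path strings.

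First I would verify the dimension claim. Given such an $R$, the associated string is $w = e_{i+1}^{t_{i+1}} \cdots e_{j-1}^{t_{j-1}}$ in the quiver $Q$, where the letters of $w$ are precisely the arrows of $Q$ corresponding to the \emph{interior} edges of the subpath $e_{i+1} \cdots e_{j-1}$ of $R$. Since $G$ is a DAG, every route is a simple path; in particular the edges $e_{i+1}, \ldots, e_{j-1}$ induce a sequence of distinct inner vertices of $G$. Translating through the identification $V(Q) = \{\text{inner vertices of } G\}$, the string $w$ passes through each vertex of $Q$ at most once. By the standard description of string modules for gentle algebras (see \cite{ButlerRingel}), the vector space dimension of $M(w)$ at a vertex $v \in Q_0$ equals the number of times $w$ visits $v$, which is at most one. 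The degenerate case $w = \varepsilon_s$ yields the one-dimensional simple at $s$, for which the bound is trivial.

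For the second statement, finite representation type follows at once: the finiteness of $G$ implies that $\calR(G)$ is finite, and Theorem~\ref{routes-modules} then forces $\mathrm{ind}\,\Lambda$ to be finite, so only finitely many isomorphism classes of indecomposable $\Lambda(G)$-modules exist. The only step requiring care is confirming that the standard dictionary between strings and basis vectors of $M(w)$ really does give $\dim M(w)_v$ = (number of visits to $v$); this is routine but is the place where one must be precise about how the two weight-change conventions interact with inverse arrows. No subtle obstacle arises, because the DAG hypothesis already prohibits band modules from appearing in the bijection.
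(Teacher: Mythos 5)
Your proposal is correct and follows essentially the same route as the paper: both arguments invoke the bijection of Theorem~\ref{routes-modules} and the fact that, since $G$ is acyclic, no route visits a vertex twice, so the corresponding string meets each vertex of $Q$ at most once and the string module has dimension at most one there; finiteness of $\calR(G)$ then gives finite representation type. Your extra remarks (the constant-path case, the absence of band modules) are sound but not needed beyond what the paper records.
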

\begin{proof}
This follows directly from the bijection in Theorem~\ref{routes-modules}. Since $G$ is acyclic then no route passes through the same vertex twice, so every indecomposable $\Lambda(G)$-module has dimension at most one at every vertex. 
\end{proof}


\begin{remark}
The converse of the above corollary does not hold; in particular, not every gentle algebra of finite representation type comes from a full DAG.  For example, consider an algebra given by the following quiver 
\[\xymatrix@C=7pt@R=7pt{ &\bullet\ar[dl]_{e_3}\\
\bullet \ar[rr]_{e_1} && \bullet \ar[ul]_{e_2}}
\]
with relations $e_1e_2=e_2e_3=e_3e_1=0$.
If such an algebra were to come from a full DAG, then $\omega(e_1)\not=\omega(e_2)$, $\omega(e_2)\not=\omega(e_3)$, and $\omega(e_3)\not=\omega(e_1)$, which is not possible. 
\end{remark}

We will be interested in a special property of objects in $\mathcal{T}(\Lambda(G))$ called $\tau$-\emph{rigidity}.  It was defined in purely homological terms and studied for general finite dimensional algebras in~\cite{AIR}.  In the case of gentle algebras, $\tau$-rigid modules were studied in~\cite{BDMTY,PPP}, which allows us to translate the definition of $\tau$-rigidity into purely combinatorial terms.
Next, we recall the relevant construction.  

Let $\Lambda=KQ/I$ be a gentle algebra.  We construct an extended algebra $\widehat{\Lambda}=K\widehat{Q}/ \widehat{I}$, called a \emph{blossoming algebra}, as follows.
The quiver $\widehat{Q}$ is obtained from $Q$ by adding sources and sinks such that each new vertex is incident to a single arrow and every vertex $i$ of $Q$ has two arrows in $\widehat{Q}$ leaving $i$ and two arrows in $\widehat{Q}$ starting at $i$.
Moreover, we impose additional relations on $\widehat{Q}$ given by paths of length two to get an ideal $\widehat{I} \supset I$ such that the resulting algebra $\widehat{\Lambda}$ becomes gentle.
Note that $\widehat{\Lambda}$ is unique up to permuting the set of sinks and permuting the set of sources. 
See Figure~\ref{fig:Q hat} for an example.

\begin{figure}
\begin{center}
    
    \begin{tikzpicture}
    
    \begin{scope}[scale=0.9, xshift=0, yshift=0]
    \vertex[fill,label=below:\footnotesize{$a_1$}](as1) at (1,1) {};
    \vertex[fill,label=below:\footnotesize{$a_2$}](as2) at (1,0) {};
    \vertex[fill,label=below:\footnotesize{$a_3$}](as3) at (1,-1) {};    
	\vertex[fill,label=below:\footnotesize{$1$}](a1) at (2,0) {};
	\vertex[fill,label=below:\footnotesize{$2$}](a2) at (3,0) {};
	\vertex[fill,label=below:\footnotesize{$3$}](a3) at (4,0) {};
    \vertex[fill,label=below:\footnotesize{$b_1$}](at1) at (5,1) {};
    \vertex[fill,label=below:\footnotesize{$b_2$}](at2) at (5,0) {};
    \vertex[fill,label=below:\footnotesize{$b_3$}](at3) at (5,-1) {};    
	
	\draw[-stealth, thick] (a1)--(a2);
	\draw[-stealth, thick] (a2)--(a3);
	\draw[-stealth, thick] (a3) to[out=-120,in=-55] (a1);
	
	\draw[-stealth, thick] (as2)--(a1);
	\draw[-stealth, thick] (a3)--(at2);
	
	\draw[-stealth, thick] (a2) to[out=120,in=10] (as1);
	\draw[-stealth, thick] (at1) to[out=170,in=60] (a2);
	\draw[-stealth, thick] (a1)--(as3);
	\draw[-stealth, thick] (at3)--(a3);
    \node[] at (1,1) {};    
    \end{scope}
    
    \end{tikzpicture}
    
\end{center}

    \caption{The quiver \(\widehat{Q}\) obtained from the quiver \(Q\) in Figure~\ref{fig:G(2,7) and Q} by adding sources and sinks.}
    \label{fig:Q hat}
\end{figure}
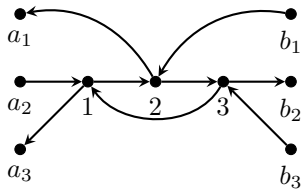

Next, we show how to extend objects in $\mathcal{T}(\Lambda)$ to certain modules over the corresponding blossoming algebra. 
\begin{definition}\label{def:string}
We define a map 
\[
\xi: \mathcal{T}(\Lambda) \to \text{ind}\,\widehat{\Lambda},
    M(w)\mapsto M(\widehat{w})
\]
as follows.
Let $w$ be a string in $\Lambda$.  
Since $\Lambda$ is gentle, the string $\widehat{w}$ that we construct next will be well-defined and unique.
If the length of $w$ is at least one, then define $\widehat{w}$ to be the string in $\widehat{\Lambda}$ obtained by extending $w$ at the start of $w$ by an inverse arrow, followed by adding as many direct arrows as possible until a source vertex is reached, and then similarly extending $w$ at the end by an arrow, followed by adding as many inverse arrows as possible until a sink vertex is reached.
Otherwise, if the length of $w$ is zero, i.e., $w$ is a constant path $\varepsilon_i$ at vertex $i$, then there are two arrows pointing away from $i$ in the blossoming quiver.  
In this case, define $\widehat{w}$ to be the string in $\widehat{\Lambda}$ obtained by extending $w$ by these two arrows followed by adding as many directed arrows at the start as possible until a source vertex is reached and as many inverse arrows at the end as possible until a sink vertex is reached.  
Finally, consider a shifted projective $P_i[1]=M(i[1])$, where we think of $i[1]$ as a ``shifted string" at vertex $i$.
Then we define the corresponding string $\widehat{i[1]}$ in the blossoming algebra to be the maximal string supported at vertex $i$ such that the path preceding $i$ consists of arrows while the path following $i$ consists of inverse arrows.
The map on the strings induces an inclusion on the modules, which sends a module $M(w)$ to $M(\widehat{w})$.
\end{definition}

\begin{example}
Consider $G=G(2,7)$ defined in Definition~\ref{def:gkn}, and let $Q$ be its associated quiver. Building on Example~\ref{ex:path algebra of contracted G(2,7)} and Figure~\ref{fig:Q hat}, we demonstrate the extension of strings in the blossoming algebra as described above. Figure~\ref{fig:Q hat2} depicts $\widehat{Q}$ along with strings $w_1=\alpha_2$, $w_2=\varepsilon_2$, and their extensions. In regard to uniqueness of the extension procedure, note that if we instead chose to extend $w_1$ by $\alpha_3$ instead of $\alpha_5$, the resulting ``extended string'' would contain $\alpha_2\alpha_3$ which is a zero relation in $\widehat{\Lambda}$.
\end{example}

It is easy to see that the image of the map in Definition~\ref{def:string} consists of all string modules $M(\widehat{w})$ such that $\widehat{w}$ is a maximal undirected string in $\widehat{\Lambda}$, i.e., it is an undirected string that starts in a source and ends in a sink.  
Moreover, it then restricts to a bijection between $\mathcal{T}(\Lambda)$ and its image. 

\begin{figure}
\begin{center}
\begin{tikzpicture}

    \begin{scope}[scale=0.9, xshift=0, yshift=0]
    \vertex[fill,label=left:\footnotesize{$a_1$}](as1) at (1,1) {};
    \vertex[fill,label=left:\footnotesize{$a_2$}](as2) at (1,0) {};
    \vertex[fill,label=left:\footnotesize{$a_3$}](as3) at (1,-1) {};    
	\vertex[fill,label=below:\footnotesize{$1$}](a1) at (2,0) {};
	\vertex[fill,label=below:\footnotesize{$2$}](a2) at (3,0) {};
	\vertex[fill,label=below:\footnotesize{$3$}](a3) at (4,0) {};
    \vertex[fill,label=right:\footnotesize{$b_1$}](at1) at (5,1) {};
    \vertex[fill,label=right:\footnotesize{$b_2$}](at2) at (5,0) {};
    \vertex[fill,label=right:\footnotesize{$b_3$}](at3) at (5,-1) {};    
	
	\draw[-stealth, thick] (a1)--(a2);
	\draw[-stealth, thick] (a2)--(a3);
	\draw[-stealth, thick] (a3) to[out=-120,in=-55] (a1);
	
	\draw[-stealth, thick] (as2)--(a1);
	\draw[-stealth, thick] (a3)--(at2);
	
	\draw[-stealth, thick] (a2) to[out=120,in=10] (as1);
	\draw[-stealth, thick] (at1) to[out=170,in=60] (a2);
	\draw[-stealth, thick] (a1)--(as3);
	\draw[-stealth, thick] (at3)--(a3);
    \node[] at (1,1) {};    
    
    \node[] (alpha4) at (2.2,1.1) {\tiny $\alpha_4$};
    \node[] (alpha1) at (2.4,0.2) {\tiny $\alpha_1$}; \node[] (alpha2) at (3.6,0.2) {\tiny $\alpha_2$}; \node[] (alpha3) at (3,-0.8) {\tiny $\alpha_3$};
    \node[] (alpha5) at (4.5,0.2) {\tiny $\alpha_5$};
    \node[] (alpha6) at (4.4,-0.8) {\tiny $\alpha_6$};
    \end{scope}
    
    \begin{scope}[scale=0.9, xshift=-150, yshift=-80]
    \vertex[fill,label=below:\footnotesize{$2$}](a2) at (3,0) {};
	\vertex[fill,label=below:\footnotesize{$3$}](a3) at (4,0) {};
    
	\draw[-stealth, thick] (a2)--(a3);
    \node[] at (2,0) {$w_1:$};    
    
    \node[] (alpha2) at (3.5,0.2) {\tiny $\alpha_2$}; \end{scope}
    
    \begin{scope}[scale=0.9, xshift=-92, yshift=-150]
    \vertex[fill,label=left:\footnotesize{$a_1$}](as1) at (1,1) {};
    \vertex[fill,label=below:\footnotesize{$2$}](a2) at (3,0) {};
	\vertex[fill,label=below:\footnotesize{$3$}](a3) at (4,0) {};
    \vertex[fill,label=right:\footnotesize{$b_2$}](at2) at (5,0) {};
    
	\draw[-stealth, thick] (a2)--(a3);
	\draw[-stealth, thick] (a3)--(at2);
	\draw[stealth-, thick] (a2) to[out=120,in=10] (as1);
	
    \node[] at (0,0.5) {$\widehat{w}_1:$};

    \node[] (alpha4) at (2.3,1.1) {\tiny $\alpha_4^{-1}$};
    \node[] (alpha2) at (3.5,0.2) {\tiny $\alpha_2$}; \node[] (alpha5) at (4.5,0.2) {\tiny $\alpha_5$};
    \end{scope}
    
    \begin{scope}[scale=0.9, xshift=92, yshift=-80]
    \vertex[fill,label=below:\footnotesize{$2$}](a2) at (3,0) {};

    \node[] at (2,0) {$w_2:$};    
    
    \end{scope}
    
    \begin{scope}[scale=0.9, xshift=150, yshift=-150]
    \vertex[fill,label=left:\footnotesize{$a_1$}](as1) at (1,1) {};
    \vertex[fill,label=below:\footnotesize{$2$}](a2) at (3,0) {};
	\vertex[fill,label=below:\footnotesize{$3$}](a3) at (4,0) {};
    \vertex[fill,label=right:\footnotesize{$b_3$}](at3) at (5,-1) {};  
    
	\draw[-stealth, thick] (a2)--(a3);
	\draw[stealth-, thick] (a2) to[out=120,in=10] (as1);
    \draw[stealth-, thick] (at3)--(a3);

    \node[] at (0,0.5) {$\widehat{w}_2:$};

    \node[] (alpha4) at (2.3,1.1) {\tiny $\alpha_4^{-1}$};
    \node[] (alpha2) at (3.5,0.2) {\tiny $\alpha_2$}; 
    \node[] (alpha6) at (4.8,-0.3) {\tiny $\alpha_6^{-1}$};
    \end{scope}
    
\end{tikzpicture}
\end{center}
    \caption{The quiver $\widehat{Q}$ from Figure~\ref{fig:Q hat} with edge labels is pictured above. The string $w_1=\alpha_2$ and its corresponding extension $\widehat{w}_1$ are depicted on the left. The string $w_2=\varepsilon_2$ and its corresponding extension $\widehat{w}_2$ are shown on the right.}
    \label{fig:Q hat2}
\end{figure}
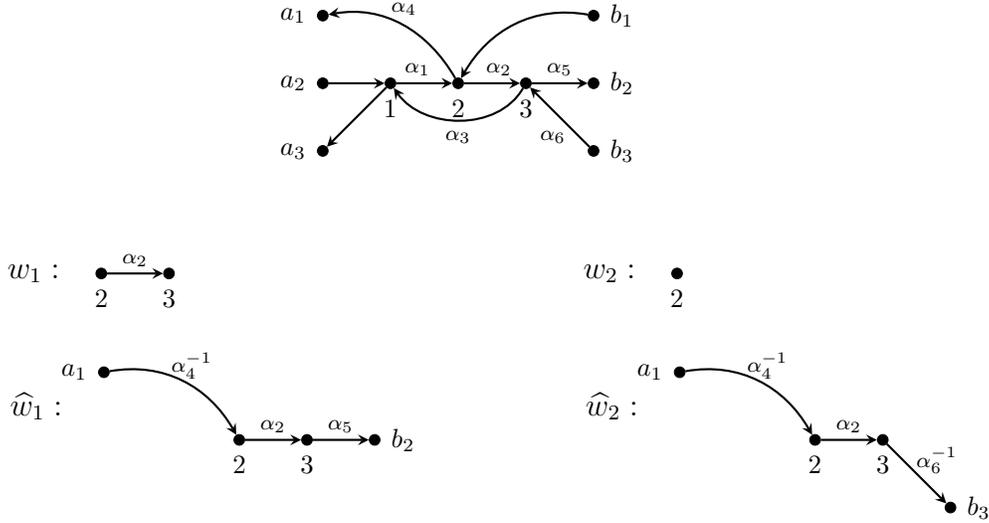

This enables us to reformulate the original definition of $\tau$-rigidity given in \cite{AIR} in the following combinatorial terms. 

\begin{definition}\cite[Theorem 4.3]{BDMTY}\cite[Theorem 2.46]{PPP}
Let $\Lambda$ be a gentle algebra, and let $M(w), M(w')$ be two objects in $\mathcal{T}(\Lambda)$.  
\begin{itemize}
    \item[(a)]  The pair $M(w), M(w')$ is said to be \emph{$\tau$-rigid} if $\widehat{w}, \widehat{w}'$ do not contain a common substring $\sigma$ such that the arrows of $\widehat{w}$ incident to $\sigma$ are both outgoing while the arrows of $\widehat{w}'$ incident to $\sigma$ are both incoming.  
    \item[(b)] The object $M(w)$ is said to be \emph{$\tau$-rigid} if the pair $M(w), M(w)$ is $\tau$-rigid. 
    \item[(c)] A collection of objects in $\mathcal{T}(\Lambda)$ is said to be \emph{support $\tau$-tilting} if it is a maximal collection of pairwise $\tau$-rigid objects. 
\end{itemize}
\end{definition}

Note that for an arbitrary gentle algebra $\Lambda$, condition (b) above does not always hold.
However, in our setting an algebra obtained from a DAG is of finite representation type such that every string passes through a vertex of $Q$ at most once, and hence (b) holds for all strings $w$.

By using the bijections between routes in $G$, objects in $\mathcal{T}(G)=\mathcal{T}(\Lambda(G))$, and maximal undirected string modules in $\widehat{\Lambda}$ discussed above, we can translate the notion of $\tau$-rigidity in the module category of $\Lambda(G)$ in terms of routes in $G$ by passing through the blossoming algebra.  
Let $\Lambda(G) = KQ/I$ be a gentle algebra coming from a full DAG $G$, and let $\widehat{\Lambda}(G)$ be the corresponding blossoming algebra.
Applying the reverse construction of Definition~\ref{def:quiver}, starting from $\widehat{\Lambda}(G)$ we can obtain a full DAG $\widehat{G}$ with a framing such that $\widehat{G}$ has the same vertices as $\widehat{Q}$ and whose arrows come from relations in the ideal $\widehat{I}$. 
Note that $G$ and $\widehat{G}$ agree on the interior vertices, and moreover $G$ together with the framing can be obtained from $\widehat{G}$ by gluing certain sources together and/or certain sinks together. In particular, $\Lambda(G)=\Lambda(\widehat{G})$ and we also obtain equality for the corresponding flow polytopes, that is  $\mathcal{F}_1(G) = \mathcal{F}_1(\widehat{G})$.  

With this notation consider the following statement, where $\phi$ is the bijection from Theorem~\ref{routes-modules}.

\begin{lemma}\label{compatibility}
Let $G$ be a full DAG with a fixed ample framing, then the following properties hold. 
\begin{itemize}
\item[(a)] Every indecomposable object in $\mathcal{T}(G)$ is $\tau$-rigid. 
\item[(b)]Two routes $R, R'$ in $G$ are coherent if and only if the corresponding objects $\phi(R), \phi(R')$ of $\mathcal{T}(G)$ are $\tau$-rigid. 
\end{itemize}
\end{lemma}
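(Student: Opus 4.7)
The plan is to reduce both statements to a walk-level analysis in $\widehat{Q}$. For part (a): when $w=w'$, the pair of arrows of $\widehat{w}$ incident to any common substring $\sigma$ is the same pair on both sides of the comparison. A single pair of arrows cannot simultaneously be ``both outgoing'' (as required for the first copy) and ``both incoming'' (as required for the second), so the obstruction to $\tau$-rigidity is never realized and $M(w)$ is $\tau$-rigid.

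For part (b), my strategy is to identify $\widehat{w}$ concretely with the walk of $R$ in $\widehat{Q}$. For a non-exceptional route $R=(e_1,\ldots,e_k)$, let
\[
W_R := e_1^{t_1}e_2^{t_2}\cdots e_k^{t_k}\text{ in }\widehat{Q},
\]
where $t_s=1$ if $\omega(e_s)=1$ and $t_s=-1$ if $\omega(e_s)=2$. I claim $\widehat{w}=W_R$. For a string module $M(w)$ with $w=e_{i+1}^{t_{i+1}}\cdots e_{j-1}^{t_{j-1}}$, the prepended inverse arrow $\beta^{-1}$ in the blossoming extension is forced to be $e_i^{-1}$ by analyzing the two outgoing arrows at $s(w)$ in $\widehat{Q}$ together with the relation-avoidance on $w^{-1}\beta$. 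Iteratively prepending the unique non-relation direct arrow at each interior vertex then peels off $e_{i-1},e_{i-2},\ldots,e_1$, terminating at the blossoming source or sink corresponding to the source of $R$. The same analysis at $t(w)$ appends $e_j,e_{j+1}^{-1},\ldots,e_k^{t_k}$. For a shifted projective $P_i[1]$ corresponding to $R$ with weight pattern $(1^a,2^b)$, the description of $\widehat{i[1]}$ as the maximal string through $i$ that is direct on the left and inverse on the right is literally $W_R$. Hence $\widehat{w}=W_R$ in all cases.

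Given this identification, a common substring $\sigma$ of $\widehat{w}_1$ and $\widehat{w}_2$ is precisely a common subwalk of $W_{R_1}, W_{R_2}$, which corresponds bijectively to a common subpath $P$ of $R_1, R_2$ in $G$. The arrow of $\widehat{w}$ immediately before $\sigma$ is direct exactly when the edge of $R$ preceding $P$ has weight~$1$, and inverse when it has weight~$2$; the analogous statement holds after $\sigma$. The condition that the arrows of $\widehat{w}$ incident to $\sigma$ are both incoming to $\sigma$ therefore translates to $R$ entering $P$ on a weight-$1$ edge and exiting on a weight-$2$ edge, while ``both outgoing'' translates to $R$ entering on a weight-$2$ edge and exiting on a weight-$1$ edge. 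On the route side, at the endpoints $v_L, v_R$ of a maximal common subpath the two routes must enter and exit on edges of opposite weights, and the labeling from Corollary~\ref{cor:amplevialabeling} ensures the $\In$ and $\Out$ linear orders place the weight-$1$ edge below the weight-$2$ edge. Unpacking Definition~\ref{def:coherent} then shows $R_1, R_2$ conflict at $P$ precisely when one has pattern (enter $1$, exit $2$) and the other (enter $2$, exit $1$). This is exactly the obstruction to $\tau$-rigidity just identified, proving (b).

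The principal obstacle will be the iterative identification in paragraph two: verifying that ``as many direct arrows as possible'' on each side of $w$ really picks out the weight-$1$ edges of $R$ extending to the source (or sink). This reduces to a small lemma stating that at any inner vertex of $G$ traversed by $R$ with matching in/out weights, the gentle relation in $\widehat{Q}$ forces a unique non-relation direct-arrow continuation that agrees with the preceding edge of $R$; care is also needed at the boundary, where the walk terminates at a blossoming source or sink corresponding to an edge of $G$ incident to a source or sink of $G$.
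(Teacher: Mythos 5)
Your part (b) follows essentially the same route as the paper: pass to the blossoming algebra, identify the extended string $\widehat{w}$ with the full source-to-sink walk of the corresponding route, and translate the common-substring obstruction into the weight pattern (enter on a $1$ / exit on a $2$ versus enter on a $2$ / exit on a $1$), which is exactly the conflict condition of Definition~\ref{def:coherent}. The identification $\widehat{w}=W_R$ that you flag as the main remaining obstacle is precisely how the paper sets things up via the map in \eqref{eq:string} (the paper phrases it as: $\widehat{w}$ is a maximal undirected string from a source to a sink of $\widehat{Q}$, hence a route $\widehat{R}$ in the expanded DAG $\widehat{G}$, and coherence of $\widehat{R},\widehat{R}'$ in $\widehat{G}$ matches coherence of $R,R'$ in $G$). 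So for (b) your proposal is correct and not genuinely different.

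Part (a), however, has a real gap. You claim that for the self-pair $M(w),M(w)$ ``the pair of arrows of $\widehat{w}$ incident to any common substring $\sigma$ is the same pair on both sides of the comparison.'' That holds only if $\sigma$ occurs at a unique position in $\widehat{w}$. In the definition of $\tau$-rigidity the two occurrences of the common substring in $\widehat{w}$ and $\widehat{w}'$ need not coincide even when $w=w'$: a string that revisits a vertex can contain two distinct occurrences of the same substring, one with both incident arrows outgoing and one with both incoming, and such a string module fails to be $\tau$-rigid. Your argument as written would prove that every string module over every gentle algebra is $\tau$-rigid, which is false --- the paper explicitly remarks after the definition that condition (b) of the definition ``does not always hold'' for arbitrary gentle algebras. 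The missing ingredient is Corollary~\ref{cor:finite}: since $G$ is acyclic, every string passes through each vertex of the quiver at most once, so each substring occurs at most once in $\widehat{w}$, and only then does your ``same pair of arrows'' observation apply. The paper's proof of (a) is exactly ``the definition plus Corollary~\ref{cor:finite}.''
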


\begin{proof}
Part (a) follows directly from the definition of $\tau$-rigidity and Corollary~\ref{cor:finite}.  

To show part (b), let $R, R'$ be two routes in $G$ and let $\phi(R)=M(w), \phi(R')=M(w')$ denote the corresponding objects in $\mathcal{T}(G)$, as described in the proof of Theorem~\ref{routes-modules}.  Now let $M(\widehat{w}), M(\widehat{w}')$ denote the associated modules over the blossoming algebra $\widehat{\Lambda}$. Each $\widehat{w}, \widehat{w}'$ is an undirected walk in $\widehat{Q}$ from a source to a sink, which gives routes $\widehat{R}, \widehat{R}'$ in $\widehat{G}$.  We see that $\widehat{R}, \widehat{R}'$ are coherent in $\widehat{G}$ if and only if the routes $R, R'$ are coherent in $G$. Therefore, it suffices to show that $M(w), M(w')$ are $\tau$-rigid if and only if the routes $\widehat{R}, \widehat{R}'$ are coherent. 

By definition $M(w), M(w')$ are $\tau$-rigid whenever there does not exist a common substring $\sigma$ of $\widehat{w}, \widehat{w}'$ such that the arrows of $\widehat{w}$ incident to $\sigma$ are both outgoing while the arrows of $\widehat{w}'$ incident to $\sigma$ are both incoming.  This means that there does not exist a common subpath $R_\sigma$ of $\widehat{R}, \widehat{R}'$ such that the arrows in $\widehat{R}$ ending and starting at $R_\sigma$ have weights 2 and 1 respectively while the arrows in $\widehat{R}'$ ending and starting at $R_\sigma$ have weights 1 and 2 respectively.  This means that the routes $\widehat{R}, \widehat{R}'$ are coherent, see Definition~\ref{def:coherent}.  This shows that if $M(w), M(w')$ are $\tau$-rigid then the routes $\widehat{R}, \widehat{R}'$ are coherent.  The converse follows in the same way. 
\end{proof}

\begin{theorem}\label{maximal_cliques}
Let $G$ be a full DAG with a fixed ample framing.
The bijection $\phi$ from Theorem~\ref{routes-modules} induces a bijection 
\[\Phi: \{R_1, \dots, R_n\} \mapsto \{\phi(R_1), \dots, \phi(R_n)\}\] 
 between the set of maximal cliques of $G$ and the set of support $\tau$-tilting $\Lambda(G)$-modules.  
\end{theorem}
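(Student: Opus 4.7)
The plan is to promote the bijection from Theorem~\ref{routes-modules} to a bijection on maximal collections, using Lemma~\ref{compatibility}(b) to transport coherence to $\tau$-rigidity. The one extra ingredient required is handling the exceptional routes, which sit in every maximal clique of $G$ but have no counterpart on the module side, since Theorem~\ref{routes-modules} only parameterizes \emph{non-exceptional} routes.

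First, I would observe that for any maximal clique $C$ in $G$, by Definition~\ref{def:cliqueexceptional} the exceptional set $\calE$ satisfies $\calE \subseteq C$. Thus the map $C \mapsto C \setminus \calE$ is a bijection between maximal cliques in $G$ and maximal collections of pairwise-coherent non-exceptional routes in $G$: it is well-defined and injective (since we just add $\calE$ back), and the image is maximal among pairwise-coherent sets of non-exceptional routes because otherwise we could adjoin another non-exceptional route to $C$ (it would automatically remain coherent with every exceptional route), contradicting maximality of $C$.

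Next, I would apply the bijection $\phi \colon \calR(G) \to \mathcal{T}(\Lambda(G))$ of Theorem~\ref{routes-modules} pointwise to such a collection. By Lemma~\ref{compatibility}(b), two non-exceptional routes $R,R'$ are coherent in $G$ if and only if $\phi(R),\phi(R')$ form a $\tau$-rigid pair in $\mathcal{T}(\Lambda(G))$, and by Lemma~\ref{compatibility}(a) each $\phi(R)$ is itself $\tau$-rigid. Consequently $\phi$ carries pairwise-coherent collections of non-exceptional routes bijectively onto pairwise $\tau$-rigid collections in $\mathcal{T}(\Lambda(G))$. Since a bijection that preserves the ``compatibility relation'' in both directions automatically sends maximal compatible collections to maximal compatible collections, the image of $C \setminus \calE$ is a support $\tau$-tilting object, and every support $\tau$-tilting object arises this way.

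Composing the two bijections yields the desired correspondence $C \longleftrightarrow \phi(C \setminus \calE)$, with inverse given by applying $\phi^{-1}$ to a support $\tau$-tilting collection and adjoining $\calE$. I do not anticipate a genuine obstacle here — the content is really already packaged in Theorem~\ref{routes-modules} and Lemma~\ref{compatibility}; the only subtlety worth being careful about is verifying that exceptional routes are coherent with every non-exceptional route (which is immediate from Definition~\ref{def:cliqueexceptional}), so that removing and reinstating $\calE$ genuinely preserves maximality in both directions.
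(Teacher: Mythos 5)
Your proof is correct and takes essentially the same route as the paper, which simply notes that maximal cliques are maximal pairwise-coherent collections, support $\tau$-tilting modules are maximal pairwise $\tau$-rigid collections, and invokes Lemma~\ref{compatibility}. Your explicit handling of the exceptional routes (stripping off $\calE$ and verifying that maximality is preserved in both directions) is a detail the paper's two-sentence proof leaves implicit, and it is handled correctly.
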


\begin{proof}
A maximal clique of $G$ is a maximal collection of pairwise coherent routes, and similarly a support $\tau$-tilting module is a collection of pairwise $\tau$-rigid modules.  Thus, the result follows from Theorem~\ref{routes-modules} and Lemma~\ref{compatibility}.
\end{proof}

Let $\text{st}(\Lambda)$ denote the set of support $\tau$-tilting modules over an algebra $\Lambda$.
It follows from \cite{AIR} that $\text{st}(\Lambda)$ for any finite dimensional algebra over an algebraically closed field has a poset structure.
This poset structure is called the \emph{$\tau$-tilting poset for $\Lambda$}.
In the case of gentle algebras this poset was described combinatorially in \cite{PPP}, and we will recall some relevant properties in the next section.
For now, we note that the Hasse diagram of the $\tau$-tilting poset is the dual graph of a simplicial complex called the \emph{$\tau$-tilting complex}.
The following corollary now follows from the bijection $\Phi$ in Theorem~\ref{maximal_cliques}. 

\begin{corollary} Let $G$ be a full DAG with an ample framing $F$. The dual graph of the triangulation $\mathrm{DKK}(G,F)$ is the Hasse diagram of the $\tau$-tilting poset on $\mathrm{st}(\Lambda(G))$.
Furthermore, the $\tau$-tilting complex is isomorphic to $\mathrm{DKK}(G,F)$.
\end{corollary}

\begin{proof}
    Suppose that two maximal cliques $\Delta, \Delta'$ of $G$ correspond to an edge in the dual graph of the triangulation $\mathrm{DKK}(G,F)$.  Then they differ by a single route, that is $\Delta=\Delta'\setminus \{R'\}\cup \{R\}$ for distinct routes $R, R'$ of $G$. By definition of $\Phi$, the corresponding support $\tau$-tilting modules $\Phi(\Delta), \Phi(\Delta')$ of $\Lambda(G)$ differ by a single indecomposable module.  This means that they are connected by an edge in the poset $\mathrm{st}(\Lambda(G))$.  The converse follows in the same way, since $\Phi$ is a bijection. This shows the first part of the statement. The second part can be deduced analogously by naturally extending $\Phi$ to a bijection between cliques of $G$  and collections of pairwise $\tau$-rigid objects of $\Lambda(G)$.
\end{proof}

The following theorem will be critical in our work.

\begin{theorem}\cite[Theorem 5.4]{DIJ}
\label{thm:tau-shellable-rep}
Let $\Lambda$ be an algebra with a finite $\tau$-tilting poset.
Then the $\tau$-tilting complex is shellable.
Moreover, every linear extension of the $\tau$-tilting poset yields a shelling order.
\end{theorem}

In particular, combining Theorems \ref{thm:tau-shellable-rep} and \ref{maximal_cliques}, we obtain the following result. 

\begin{corollary}
    
    \label{thm:tau-shellable}
    For the poset structure of maximal cliques of a full DAG given by the correspondence in Theorem~\ref{maximal_cliques}, every linear extension yields a shelling of the DKK triangulation.
\end{corollary}

\begin{figure}
\centering
\begin{tabular}{cc}
\hline
Flow polytopes & Gentle algebras\\
\hline
$G\backslash \{s,t\}$ 
    & $Q_0$\\
edges of $G$ not incident to $s$ or $t$ 
    & $Q_1$\\
non-exceptional routes $\mathcal{R}(G)$
    & $\mathcal{T}(\Lambda(G))=\text{ind}\,\Lambda\cup \{P_i[1] : i\in Q_0\}$\\
a pair of coherent routes 
    & a $\tau$-rigid pair of modules\\
a maximal clique of routes
    & support $\tau$-tilting modules\\
the triangulation $\mathrm{DKK}(G,F)$    
    & the $\tau$-tilting complex\\
dual graph of the triangulation $\mathrm{DKK}(G,F)$    
    & Hasse diagram of $\tau$-tilting poset on $\mathrm{st}(\Lambda(G))$\\
\hline
\end{tabular}
    \caption{A summary of equivalent terminology between DKK triangulations of amply framed flow polytopes and representations of gentle algebras.}
    \label{fig:dictionary}
\end{figure}

\section{Gorenstein Flow Polytopes}\label{sec:gorenstein}

In the case of gentle algebras, the partial order in the $\tau$-tilting poset can be reformulated in combinatorial terms as proved in \cite[Theorem 6.2]{BDMTY} for $\tau$-tilting finite algebras or \cite[Theorem 2.46]{PPP} in full generality.  The precise formulation of the partial order between two adjacent support $\tau$-tilting modules can be found, for example, in \cite[Proposition 2.33]{PPP}, which we present below using the terminology of routes. 
In Definition~\ref{def:order}, we describe the partial order using the bijection between non-exceptional routes of $G$ and maximal undirected strings in the corresponding blossoming algebra described in the previous section, see Definition~\ref{def:string}.  

\begin{definition}\label{def:order}
Let $G$ be a full DAG with ample framing $F$ and associated triangulation $\DKK(G,F)$.
Let $\Delta_1=\Delta\cup \{R_1\}$ and $\Delta_2=\Delta\cup \{R_2\}$ be adjacent maximal cliques in $\DKK(G,F)$.
Then the collection of vertices and edges in 
$R_1\cap R_2$ has a unique connected component $w$ such that the edge entering $w$ in $R_1$ is labeled $2$ and the edge exiting $w$ in $R_1$ is labeled $1$, and vice versa for $R_2$.
We define an ordering $\prec'$ on pairs of adjacent facets of $\DKK(G,F)$ where $\Delta_2\prec' \Delta_1$ in the case above, and we extend $\prec'$ to a partial order $\prec$ by taking the transitive closure of $\prec'$.
\end{definition}

\begin{example}
Let $G$ be the full contraction of $G(2,7)$ as shown in Figure~\ref{fig:G(2,7) and Q}.
Let $\Delta_1$, $\Delta_2$, $\Delta_1'$, and $\Delta_2'$ be facets in $\DKK(G,F)$, where $F$ is the length framing.
If $\Delta_1 = \Delta \cup \{R_1\}$ and $\Delta_2 = \Delta \cup \{R_2\}$, with $R_1$ and $R_2$ as in Figure~\ref{fig:coverRelation}, then $w = (1,2)$, and we have that $\Delta_2 \prec \Delta_1$. 
If $\Delta_1' = \Delta' \cup \{R_1'\}$ and $\Delta_2' = \Delta' \cup \{R_2'\}$, with $R_1'$ and $R_2'$ as in Figure~\ref{fig:coverRelation}, then although $R_1'\cap R_2' = \{1,3\}$, only vertex $3$ has incoming edge labeled $2$ and outgoing edge labeled $1$ for $R_1'$ and vice versa for $R_2'$. 
Thus $w = \{3\}$, and we see that $\Delta_2'\prec \Delta_1'$.
Figure~\ref{fig:G(2,7)poset} shows the $\tau$-tiling poset with partial order $\prec$ for the framed DAG $G$.
A smaller and more detailed example is given in Figure~\ref{fig:G(2,6) and poset} for a full contraction of $G(2,6)$ with the length framing.
\end{example}

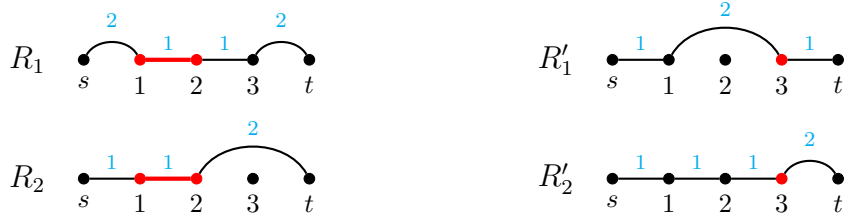
\begin{figure}
    \centering
    \begin{tikzpicture}
    \begin{scope}[xshift=-30, scale=1.5]
    \begin{scope}[scale=0.5, xshift=0, yshift=0]

   \node[] (R2) at (0,0) {$R_2$};

	\vertex[fill,label=below:\footnotesize{$s$}](as) at (1,0) {};
	\vertex[fill,
	color=red,label=below:\footnotesize{$1$}](a1) at (2,0) {};
	\vertex[fill,
	color=red,label=below:\footnotesize{$2$}](a2) at (3,0) {};
	\vertex[fill,label=below:\footnotesize{$3$}](a3) at (4,0) {};
	\vertex[fill,label=below:\footnotesize{$t$}](at) at (5,0) {};
	
	\draw[thick] (as)--(a1);
	\node[] at (1.5,0.3) {\textcolor{cyan}{\tiny$1$}}; 
	
	\draw[ultra thick, color=red] (a1)--(a2);
	\node[] at (2.5,0.3) {\textcolor{cyan}{\tiny$1$}};  
	


	

	\draw[thick] (a2) to[out=60,in=120] (at);
	\node[] at (4,0.9) {\textcolor{cyan}{\tiny$2$}}; 
	
	
    
    \end{scope}
    
    \begin{scope}[scale=0.5, xshift=0, yshift=60]
    
    \node[] (R1) at (0,0) {$R_1$};
    
	\vertex[fill,label=below:\footnotesize{$s$}](as) at (1,0) {};
	\vertex[fill,color=red,label=below:\footnotesize{$1$}](a1) at (2,0) {};
	\vertex[fill,color=red,label=below:\footnotesize{$2$}](a2) at (3,0) {};
	\vertex[fill,label=below:\footnotesize{$3$}](a3) at (4,0) {};
	\vertex[fill,label=below:\footnotesize{$t$}](at) at (5,0) {};
	
	
	\draw[ultra thick, color=red] (a1)--(a2);
	\node[] at (2.5,0.3) {\textcolor{cyan}{\tiny$1$}};  
	
	\draw[thick] (a2)--(a3);
	\node[] at (3.5,0.3) {\textcolor{cyan}{\tiny$1$}};  


	

	
	\draw[thick] (as) to[out=60,in=120] (a1);
	\node[] at (1.5,0.7) {\textcolor{cyan}{\tiny$2$}}; 
	
    
	\draw[thick] (a3) to[out=60,in=120] (at);
	\node[] at (4.5,0.7) {\textcolor{cyan}{\tiny $2$}}; 
    \end{scope}
    \end{scope}
    
    \begin{scope}[xshift=170, scale=1.5]
    \begin{scope}[scale=0.5, xshift=0, yshift=60]

    \node[] (R2) at (0,0) {$R_1'$};

	\vertex[fill,label=below:\footnotesize{$s$}](as) at (1,0) {};
	\vertex[fill,label=below:\footnotesize{$1$}](a1) at (2,0) {};
	\vertex[fill,label=below:\footnotesize{$2$}](a2) at (3,0) {};
	\vertex[fill,
	color=red,label=below:\footnotesize{$3$}](a3) at (4,0) {};
	\vertex[fill,label=below:\footnotesize{$t$}](at) at (5,0) {};
	
	\draw[thick] (as)--(a1);
	\node[] at (1.5,0.3) {\textcolor{cyan}{\tiny$1$}}; 
	
	

	\draw[thick] (a3)--(at);
    \node[] at (4.5,0.3) {\textcolor{cyan}{\tiny$1$}}; 

	

	
	
	\draw[thick] (a1) to[out=60,in=120] (a3);
	\node[] at (3, 0.9)  {\textcolor{cyan}{\tiny$2$}}; 
    
    \end{scope}
    
    \begin{scope}[scale=0.5, xshift=0, yshift=0]
    
    \node[] (R1) at (0,0) {$R_2'$};
    
	\vertex[fill,label=below:\footnotesize{$s$}](as) at (1,0) {};
	\vertex[fill,label=below:\footnotesize{$1$}](a1) at (2,0) {};
	\vertex[fill,label=below:\footnotesize{$2$}](a2) at (3,0) {};
	\vertex[fill, color=red,label=below:\footnotesize{$3$}](a3) at (4,0) {};
	\vertex[fill,label=below:\footnotesize{$t$}](at) at (5,0) {};
	
	\draw[thick] (as)--(a1);
	\node[] at (1.5,0.3) {\textcolor{cyan}{\tiny$1$}}; 
	
	\draw[thick] (a1)--(a2);
	\node[] at (2.5,0.3) {\textcolor{cyan}{\tiny$1$}};  
	
	\draw[thick] (a2)--(a3);
	\node[] at (3.5,0.3) {\textcolor{cyan}{\tiny$1$}};  


	

	
	
    
	\draw[thick] (a3) to[out=60,in=120] (at);
	\node[] at (4.5,0.7) {\textcolor{cyan}{\tiny $2$}}; 
    \end{scope}
    \end{scope}
    
    \end{tikzpicture}
    \caption{Two pairs of routes in the graph $G$ from Figure~\ref{fig:G(2,7) and Q}. The connected component $w$ in $R_1\cap R_2$ is the edge $(1,2)$, while in $R_1'\cap R_2'$ the connected component is the vertex $3$.}
    \label{fig:coverRelation}
\end{figure}

\begin{remark}
The uniqueness in Definition~\ref{def:order} is a consequence of a more general result on gentle algebras, which give a constructive way to calculate adjacent support $\tau$-tilting modules. 
In particular, this definition is an application of \cite[Proposition 2.33]{PPP} together with the bijection in Theorem~\ref{maximal_cliques}, which allows us to restate it in terms of routes.  
Thus, Definition~\ref{def:order} may be formulated as follows.  Given a maximal clique $\Delta_1$ containing a non-exceptional route $R_1$, we can write $R_1=uwv$ where $w$ is uniquely determined by $\Delta_1$ and the last edge in $u$ has the opposite label of the first edge in $v$.
Moreover, $\Delta_1$ contains two other routes $p=u'wv$ and $q=uwv'$.
Then Proposition 2.33 says that the clique $\Delta_2$ obtained by exchanging $R_1$ by a new route $R_2$, where $R_2=u'wv'$, is the unique other maximal clique that contains $\Delta$ and such that $R_1, R_2$ are not coherent at $w$.
Note that if there is another connected component $w' \in R_1\cap R_2$ satisfying the conditions of Definition~\ref{def:order}, then $R_1$ and $R_2$ would be incoherent at $w'$, and in particular $w'$ would be contained in $u$ or $v$.
Hence, $w'$ would be contained in $q$ or $p$ respectively.
Then $R_2$ would be incoherent with either $q$ or $p$, which contradicts that $R_2$, $p$, and $q$ belong to a common clique $\Delta_2$.

    Alternatively, the uniqueness was also shown in \cite[Theorem 9.4]{BDMTY} in the special case of gentle algebras with the property that every indecomposable $\tau$-rigid module is a brick.
This condition is automatically satisfied for algebras coming from full DAGs by Corollary~\ref{cor:finite}.
\end{remark}

\begin{figure}
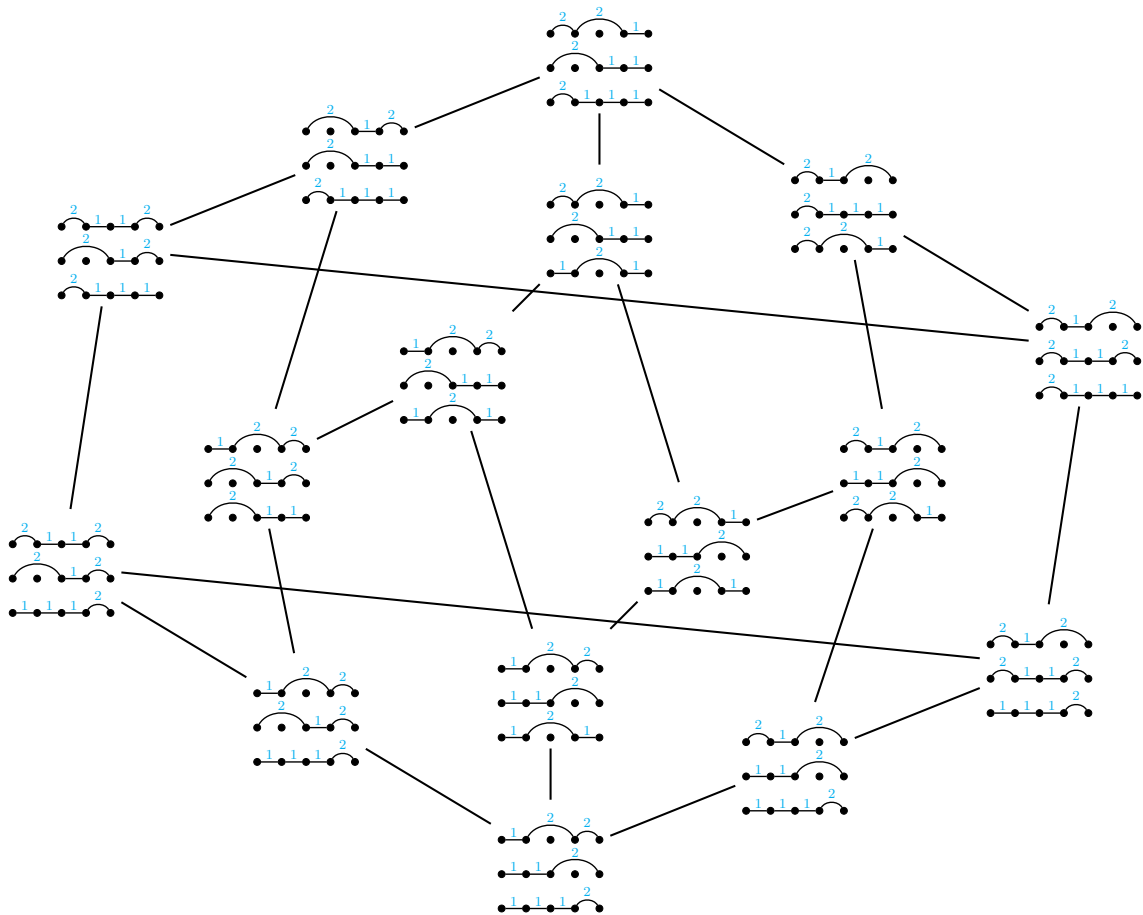

\centering
\scalebox{0.65}{

    };        

\draw[very thick] (16)--(14);
\draw[very thick] (16)--(15);
\draw[very thick] (16)--(12);
\draw[very thick] (15)--(10);
\draw[very thick] (15)--(9);
\draw[very thick] (12)--(11);
\draw[very thick] (14)--(13);
\draw[very thick] (13)--(11);
\draw[very thick] (10)--(4);
\draw[very thick] (9)--(4);
\draw[very thick] (4)--(1);
\draw[very thick] (14)--(8);
\draw[very thick] (12)--(6);
\draw[very thick] (9)--(6);
\draw[very thick] (10)--(8);
\draw[very thick] (1)--(2);
\draw[very thick] (1)--(3);
\draw[very thick] (2)--(6);
\draw[very thick] (3)--(8);
\draw[very thick] (5)--(11);
\draw[very thick] (5)--(2);
\draw[very thick] (5)--(7);
\draw[very thick] (7)--(3);
\draw[very thick] (7)--(13);
\end{tikzpicture}
}
\caption{The $\tau$-tilting poset of the full DAG with framing given in Figure~\ref{fig:G(2,7) and Q}.
Note that only the non-exceptional routes are depicted, as every maximal clique contains the exceptional routes.
The exceptional routes for this example are given in Figure~\ref{fig:G(2,7) exceptionals}.}
\label{fig:G(2,7)poset}
\end{figure}

\begin{figure}
\begin{center}
\scalebox{0.7}{
\begin{tikzpicture}[scale=1]
\begin{scope}[scale=1.2, xshift=10, yshift=100]
	\vertex[fill,label=below:\footnotesize{$s$}](as) at (1,0) {};
	\vertex[fill,label=below:\footnotesize{$1$}](a1) at (2,0) {};
	\vertex[fill,label=below:\footnotesize{$2$}](a2) at (3,0) {};
	\vertex[fill,label=below:\footnotesize{$t$}](at) at (4,0) {};
	
	\draw[-stealth, thick] (as)--(a1);
	\draw[-stealth, thick] (a1)--(a2);
	\draw[-stealth, thick] (a2)--(at);

	\draw[-stealth, thick] (as) to[out=60,in=120] (a2);
	\draw[-stealth, thick] (a1) to[out=60,in=120] (at);
	\draw[-stealth, thick] (as) to[out=60,in=120] (a1);
	\draw[-stealth, thick] (a2) to[out=60,in=120] (at);

    \node[] at (1.5,-0.15) {\textcolor{cyan}{\tiny $1$}}; 
    \node[] at (2.5,-0.15) {\textcolor{cyan}{\tiny $1$}}; 
    \node[] at (3.5,-0.15) {\textcolor{cyan}{\tiny $1$}}; 
    
    \node[] at (2,0.75) {\textcolor{cyan}{\tiny $2$}}; 
    \node[] at (3,0.75) {\textcolor{cyan}{\tiny $2$}}; 
    
    \node[] at (1.5,0.17) {\textcolor{cyan}{\tiny $2$}}; 
    \node[] at (3.5,0.17) {\textcolor{cyan}{\tiny $2$}}; 
    
    \node[] at (1,1) {$G$};

\end{scope}

\begin{scope}[scale=1.2, xshift=38, yshift=40]
	\vertex[fill,label=below:\footnotesize{$1$}](a1) at (1,0) {};
	\vertex[fill,label=below:\footnotesize{$2$}](a2) at (2,0) {};
	
	\draw[-stealth, thick] (a1)--(a2);
	
    \node[] at (1.5,0.2) {$\alpha_1$};
    
    \node[] at (1,0.8) {$Q$};
\end{scope}

\begin{scope}[xshift=300, yshift=-150]
\begin{scope}[scale=0.8, xshift=0, yshift=0]
	\vertex[fill](as) at (1,0) {};
	\vertex[fill](a1) at (2,0) {};
	\vertex[fill](a2) at (3,0) {};
	\vertex[fill](at) at (4,0) {};
	
	\draw[-stealth, thick] (as)--(a1);

	\draw[-stealth, thick] (a1) to[out=60,in=120] (at);

	\node[] at (1.5,-0.2) {\textcolor{cyan}{\scriptsize $1$}}; 
	\node[] at (3,0.8) {\textcolor{cyan}{\scriptsize $2$}}; 
	\node[] at (5,0) {\large $P_1[1]$}; 
\end{scope}
\begin{scope}[scale=0.8, xshift=0, yshift=-30]
    \vertex[fill](as) at (1,0) {};
    \vertex[fill](a1) at (2,0) {};			\vertex[fill](a2) at (3,0) {};	
    \vertex[fill](at) at (4,0) {};
	
    \draw[-stealth, thick] (as)--(a1);
    \draw[-stealth, thick] (a1)--(a2);

    \draw[-stealth, thick] (a2) to[out=60,in=120] (at);
	
    \node[] at (1.5,-0.2) {\textcolor{cyan}{\scriptsize $1$}}; 
    \node[] at (2.5,-0.2) {\textcolor{cyan}{\scriptsize $1$}}; 
    
    \node[] at (3.5,0.1) {\textcolor{cyan}{\scriptsize $2$}}; 
    \node[] at (5,0) {\large $P_2[1]$}; 
\end{scope}
\end{scope}

\begin{scope}[xshift=200, yshift=0]
\begin{scope}[scale=0.8, xshift=0, yshift=0]
			\vertex[fill](as) at (1,0) {};
			\vertex[fill](a1) at (2,0) {};
			\vertex[fill](a2) at (3,0) {};
			\vertex[fill](at) at (4,0) {};
	
			\draw[-stealth, thick] (as)--(a1);

			\draw[-stealth, thick] (a1) to[out=60,in=120] (at);

			\node[] at (1.5,-0.2) {\textcolor{cyan}{\scriptsize $1$}}; 
    
   		\node[] at (3,0.8) {\textcolor{cyan}{\scriptsize $2$}}; 
    
    		\node[] at (5,0) {\large $P_1[1]$}; 
\end{scope}
\begin{scope}[scale=0.8, xshift=0, yshift=-30]
    \vertex[fill](as) at (1,0) {};
    \vertex[fill](as) at (1,0) {};
    \vertex[fill](a1) at (2,0) {};
    \vertex[fill](a2) at (3,0) {};
    \vertex[fill](at) at (4,0) {};
	
    \draw[-stealth, thick] (a2)--(at);
    \draw[-stealth, thick] (as) to[out=60,in=120] (a2);
    \node[] at (3.5,-0.2) {\textcolor{cyan}{\scriptsize $1$}}; 

    \node[] at (2,0.4) {\textcolor{cyan}{\scriptsize $2$}}; 
    \node[] at (5.2,0) {\large $M(\varepsilon_2)$}; 
\end{scope}
\end{scope}

\begin{scope}[xshift=500, yshift=-60]
\begin{scope}[scale=0.8, xshift=0, yshift=0]
			\vertex[fill](as) at (1,0) {};
			\vertex[fill](a1) at (2,0) {};
			\vertex[fill](a2) at (3,0) {};
			\vertex[fill](at) at (4,0) {};
	
			\draw[-stealth, thick] (a1)--(a2);
			\draw[-stealth, thick] (as) to[out=60,in=120] (a1);
			\draw[-stealth, thick] (a2) to[out=60,in=120] (at);
	
            \node[] at (2.5,-0.2) {\textcolor{cyan}{\scriptsize $1$}}; 
    	\node[] at (1.5,0.1) {\textcolor{cyan}{\scriptsize $2$}}; 
    	\node[] at (3.5,0.1) {\textcolor{cyan}{\scriptsize $2$}}; 
			
		\node[] at (5.2,0) {\large $M(\varepsilon_1)$}; 
		\end{scope}
		\begin{scope}[scale=0.8, xshift=0, yshift=-30]
		\vertex[fill](as) at (1,0) {};
			\vertex[fill](as) at (1,0) {};
			\vertex[fill](a1) at (2,0) {};
			\vertex[fill](a2) at (3,0) {};
			\vertex[fill](at) at (4,0) {};
	
			\draw[-stealth, thick] (as)--(a1);
			\draw[-stealth, thick] (a1)--(a2);

			\draw[-stealth, thick] (a2) to[out=60,in=120] (at);

			\node[] at (1.5,-0.2) {\textcolor{cyan}{\scriptsize $1$}}; 
    		\node[] at (2.5,-0.2) {\textcolor{cyan}{\scriptsize $1$}}; 
    
    		\node[] at (3.5,0.1) {\textcolor{cyan}{\scriptsize $2$}}; 
			
    		\node[] at (5,0) {\large $P_2[1]$}; 
\end{scope}
\end{scope}

\begin{scope}[xshift=500, yshift=60]
\begin{scope}[scale=0.8, xshift=0, yshift=0]
    \vertex[fill](as) at (1,0) {};
    \vertex[fill](a1) at (2,0) {};
    \vertex[fill](a2) at (3,0) {};
    \vertex[fill](at) at (4,0) {};
	
    \draw[-stealth, thick] (a1)--(a2);
    \draw[-stealth, thick] (as) to[out=60,in=120] (a1);
    \draw[-stealth, thick] (a2) to[out=60,in=120] (at);
	
    \node[] at (2.5,-0.2) {\textcolor{cyan}{\scriptsize $1$}}; 
    \node[] at (1.5,0.1) {\textcolor{cyan}{\scriptsize $2$}}; 
    \node[] at (3.5,0.1) {\textcolor{cyan}{\scriptsize $2$}}; 
			
    \node[] at (5.2,0) {\large $M(\varepsilon_1)$}; 
\end{scope}
\begin{scope}[scale=0.8, xshift=0, yshift=-30]
    \vertex[fill](as) at (1,0) {};
    \vertex[fill](as) at (1,0) {};
    \vertex[fill](a1) at (2,0) {};
    \vertex[fill](a2) at (3,0) {};
    \vertex[fill](at) at (4,0) {};
	
    \draw[-stealth, thick] (a1)--(a2);
    \draw[-stealth, thick] (a2)--(at);

    \draw[-stealth, thick] (as) to[out=60,in=120] (a1);
    
    \node[] at (2.5,-0.2) {\textcolor{cyan}{\scriptsize $1$}}; 
    \node[] at (3.5,-0.2) {\textcolor{cyan}{\scriptsize $1$}}; 
    
    \node[] at (1.5,0.1) {\textcolor{cyan}{\scriptsize $2$}}; 
    \node[] at (5.2,0) {\large $M(\alpha_1)$}; 
\end{scope}
\end{scope}

\begin{scope}[xshift=290, yshift=150]
\begin{scope}[scale=0.8, xshift=0, yshift=0]
    \vertex[fill](as) at (1,0) {};
    \vertex[fill](a1) at (2,0) {};
    \vertex[fill](a2) at (3,0) {};
    \vertex[fill](at) at (4,0) {};
	
    \draw[-stealth, thick] (a2)--(at);
    \draw[-stealth, thick] (as) to[out=60,in=120] (a2);
    
    \node[] at (3.5,-0.2) {\textcolor{cyan}{\scriptsize $1$}}; 
    \node[] at (2,0.8) {\textcolor{cyan}{\scriptsize $2$}}; 
    
    \node[] at (5.2,0) {\large $M(\varepsilon_2)$}; 
\end{scope}
\begin{scope}[scale=0.8, xshift=0, yshift=-30]
    \vertex[fill](as) at (1,0) {};
    \vertex[fill](as) at (1,0) {};
    \vertex[fill](a1) at (2,0) {};
    \vertex[fill](a2) at (3,0) {};
    \vertex[fill](at) at (4,0) {};
	
    \draw[-stealth, thick] (a1)--(a2);
    \draw[-stealth, thick] (a2)--(at);

    \draw[-stealth, thick] (as) to[out=60,in=120] (a1);
    \node[] at (2.5,-0.2) {\textcolor{cyan}{\scriptsize $1$}}; 
    \node[] at (3.5,-0.2) {\textcolor{cyan}{\scriptsize $1$}}; 
    
    \node[] at (1.5,0.1) {\textcolor{cyan}{\scriptsize $2$}}; 
    
    \node[] at (5.2,0) {\large $M(\alpha_1)$}; 
\end{scope}
\end{scope}

\begin{scope}[xshift=440, yshift=-150]
    \draw[thick] (0,0) -- (3,1.5); 
\end{scope}
\begin{scope}[xshift=330, yshift=-130]
    \draw[thick] (0,0) -- (-1.3,3); 
\end{scope}
\begin{scope}[xshift=290, yshift=25]
    \draw[thick] (0,0) -- (1.3,3); 
\end{scope}
\begin{scope}[xshift=440, yshift=125]
    \draw[thick] (0,0) -- (3,-1.5); 
\end{scope}
\begin{scope}[xshift=550, yshift=-40]
    \draw[thick] (0,0) -- (0,2); 
\end{scope}

\begin{scope}[xshift=440, yshift=-160]
    \node[] (a) at (2.5,0.6) {$(P_1[1],M(\varepsilon_1))$}; 
    \vertex[fill,label=below:\footnotesize{$1$}](b2) at (2,0) {};
\end{scope}
\begin{scope}[xshift=190, yshift=-100]
    \node[] (a) at (2.5,0.6) {$(P_2[1],M(\varepsilon_2))$}; 
    \vertex[fill,label=below:\footnotesize{$2$}](b1) at (3,0) {};    
\end{scope}
\begin{scope}[xshift=520, yshift=-15]
    \node[] (a) at (2.5,0.6) {$(P_2[1],M(\alpha_1))$}; 
    \vertex[fill,label=below:\footnotesize{$1$}](b1) at (2,0) {};
    \vertex[fill,label=below:\footnotesize{$2$}](b2) at (3,0) {};    
    \draw[-stealth, thick] (b1)--(b2);
\end{scope}
\begin{scope}[xshift=190, yshift=70]
    \node[] (a) at (2.5,0.6) {$(P_1[1], M(\alpha_1))$}; 
    \vertex[fill,label=below:\footnotesize{$1$}](b1) at (2,0) {};    
\end{scope}
\begin{scope}[xshift=440, yshift=110]
    \node[] (a) at (2.5,0.6) {$(M(\varepsilon_1),M(\varepsilon_2))$}; 
    \vertex[fill,label=below:\footnotesize{$2$}](b2) at (3,0) {};
\end{scope}
\end{tikzpicture}
}
\end{center}
\caption{A graph $G$ obtained as a full contraction of $G(2,6)$ with the length framing and its induced quiver $Q$ (left).
The associated $\tau$-tilting poset with vertices given by maximal cliques (right).
The exceptional routes are omitted from each clique and the poset edges are labeled with the associated brick and connected component $w$.
}
\label{fig:G(2,6) and poset}
\end{figure}

\begin{remark}
We say that the framed DAG $[G,F]$ is {\em symmetric} if reversing its vertex labeling ($i\mapsto n-i$) in $[G,F]$ is a framing-preserving isomorphism of $[G,F]$. 
In a symmetric full DAG $G$ with framing $F$, if $\Delta_1 \prec \Delta_2$ in $\DKK(G,F)$, then reversing the vertex labels in the routes of $\Delta_1$ and $\Delta_2$ yield two cliques $\Delta_1'$ and $\Delta_2'$ in $\DKK(G,F)$ satisfying $\Delta_2' \prec \Delta_1'$. 
Thus we observe that the poset in Definition~\ref{def:order} is self-dual if $[G,F]$ is symmetric.
We see this in Figure~\ref{fig:G(2,7)poset} as $G(2,7)$ is symmetric with the ample framing of Figure~\ref{fig:G(2,7) and Q}. 
\end{remark}

Following Definition~\ref{def:order} we can label every edge of the dual graph of  $\DKK(G,F)$ connecting two adjacent cliques $\Delta_1$ and $\Delta_2$ by the unique path $w$.
The corresponding module $M(w)$ in the blossoming algebra is a \emph{brick}, meaning that the only morphisms from $M(w)$ to itself are isomorphisms and the zero map.
Indeed, this follows because $M(w)$ is at most one-dimensional at every vertex of the quiver $\widehat{Q}$.
On the level of the representation theory, this corresponds to the so-called brick labeling of the edges of the $\tau$-tilting poset $\text{st}(\Lambda(G))$, which was studied for general finite dimensional algebras in \cite{BCZ19,DIRRT}.  In particular, the following statement is a special case of \cite[Proposition 3.2.5]{BTZ} about torsion classes whenever the torsion class is generated by a support $\tau$-tilting module.  

\begin{proposition}\cite[Proposition 3.2.5]{BTZ}
Let $\Lambda$ be a finite dimensional algebra.  Then a support $\tau$-tilting module $T \in \text{st}(\Lambda)$ is completely determined by the bricks labeling the down edges coming out of $T$ in the support $\tau$-tilting poset.
\end{proposition}


Similarly, $T$ is completely determined by the bricks labeling the up edges coming into $T$.
This leads to the definition of the kappa map on the $\tau$-tilting poset introduced and studied in \cite{BTZ}.

\begin{definition}\cite[Proposition B]{BTZ}
Let $\Lambda$ be a finite dimensional algebra.
The \emph{kappa map} $\kappa: \text{st}(\Lambda)\to\text{st}(\Lambda)$ on the support $\tau$-tilting poset is defined as follows.
Given $T\in \text{st}(\Lambda)$, let $M_1, \dots, M_t$ be the set of bricks labeling the down edges coming out of $T$.
Then $\kappa(T)$ is defined as the support $\tau$-tilting module with up edges having labels $M_1, \dots, M_t$.  
\end{definition}

The following statement follows directly from the definition of the map given above and the fact that every module in $\text{st}(\Lambda)$ is uniquely determined by the bricks labeling its up edges or its down edges.     

\begin{theorem}\label{thm:kappa}
Let $\Lambda$ be an algebra such that its $\tau$-titling poset is finite.  Then the kappa map is a bijection on $\textup{st}(\Lambda)$.  
\end{theorem}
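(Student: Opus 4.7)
The plan is to derive bijectivity directly from the two uniqueness properties stated immediately before Theorem~\ref{thm:kappa}: every $T\in\textup{st}(\Lambda)$ is uniquely determined by the multiset of bricks labeling its down edges, and equally by the multiset of bricks labeling its up edges. Granted well-definedness of $\kappa$, the bijection will reduce to a short formal argument combined with finiteness.

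First I would verify that $\kappa$ is actually a well-defined map on $\textup{st}(\Lambda)$. Given $T$ with down-edge brick labels $M_1,\dots,M_t$, one needs to know that there exists a support $\tau$-tilting module whose up-edge labels are exactly $M_1,\dots,M_t$. This existence is the substantive input, supplied by \cite[Proposition B]{BTZ}; once it is in hand, uniqueness of modules by their up-edge labels guarantees that $\kappa(T)$ is unambiguously defined, giving a function $\textup{st}(\Lambda)\to\textup{st}(\Lambda)$.

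Next I would prove injectivity. Suppose $\kappa(T_1)=\kappa(T_2)$. Then the up-edge brick labels of $\kappa(T_1)$ and $\kappa(T_2)$ coincide, so by the defining prescription of $\kappa$ the down-edge brick labels of $T_1$ and $T_2$ coincide. The uniqueness of modules by their down-edge labels then forces $T_1=T_2$. Since $\textup{st}(\Lambda)$ is finite by hypothesis and $\kappa$ is a self-map of this finite set, injectivity automatically upgrades to surjectivity, and hence $\kappa$ is a bijection.

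The main obstacle, if any, is not in the formal argument above but in the well-definedness step: one must know that the multiset of bricks appearing as down-edge labels at some $T$ is always realizable as the multiset of up-edge labels at a (necessarily unique) module $T'$. This existence is precisely what \cite[Proposition B]{BTZ} provides, and once it is granted the remainder is a one-line pigeonhole argument on the finite poset $\textup{st}(\Lambda)$.
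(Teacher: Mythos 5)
Your proof is correct and matches the paper's (very terse) justification, which simply asserts that bijectivity follows from the definition of $\kappa$ together with the fact that each support $\tau$-tilting module is uniquely determined by the bricks labeling its up edges or its down edges. Your explicit treatment of well-definedness via \cite[Proposition B]{BTZ} and the injectivity-plus-finiteness pigeonhole argument is exactly the intended reasoning, just spelled out in more detail.
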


Given an element $p$ of a poset $P$, we define the following statistics.
Let $\textup{dcov}(p)$ denote the number of down neighbors of $p$ in $P$, and similarly let $\textup{ucov}(p)$ denote the number of up neighbors of $p$ in $P$.
If $P$ be a finite poset, then we can define the following polynomial $\textup{dcov}(P) = \sum_{p\in P} x^{\textup{dcov}(p)}$ which records the down statistics of $P$.  

\begin{theorem}\label{thm:downsymmetry}
Let $\Lambda$ be an algebra such that its $\tau$-titling poset is finite, then the polynomial $\textup{dcov}(\textup{st}(\Lambda))$ is symmetric. 
\end{theorem}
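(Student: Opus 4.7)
The approach I would take combines the kappa-bijection of Theorem~\ref{thm:kappa} with a well-known mutation-theoretic invariant of support $\tau$-tilting posets. Specifically, for every $T\in\textup{st}(\Lambda)$, the Adachi--Iyama--Reiten mutation theorem \cite{AIR} says that mutating at each of the $n$ indecomposable summands of $T$ produces a covering relation which is either upward or downward in $\textup{st}(\Lambda)$, where $n$ denotes the number of simple $\Lambda$-modules. Consequently $\textup{dcov}(T)+\textup{ucov}(T)=n$ for every $T$. I would first establish (or cite) this identity as the key structural input.

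Next, I would use the definition of $\kappa$ to observe that the bricks labeling the down-edges at $T$ coincide (as a multiset) with the bricks labeling the up-edges at $\kappa(T)$, so in particular $\textup{dcov}(T)=\textup{ucov}(\kappa(T))$. Because Theorem~\ref{thm:kappa} ensures $\kappa$ is a bijection on $\textup{st}(\Lambda)$, summing this identity and re-indexing by $T'=\kappa(T)$ yields
\[
\textup{dcov}(\textup{st}(\Lambda))(x)=\sum_{T}x^{\textup{dcov}(T)}=\sum_{T}x^{\textup{ucov}(\kappa(T))}=\sum_{T'}x^{\textup{ucov}(T')}.
\]

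The final step is to combine the two: substituting $\textup{ucov}(T')=n-\textup{dcov}(T')$ into the right-hand side gives
\[
\textup{dcov}(\textup{st}(\Lambda))(x)=x^n\cdot\textup{dcov}(\textup{st}(\Lambda))(1/x),
\]
which is exactly the palindromicity condition $a_i=a_{n-i}$ on the coefficients, establishing symmetry of the polynomial.

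The step requiring the most care is the identity $\textup{dcov}(T)+\textup{ucov}(T)=n$. While this is standard in $\tau$-tilting theory, it relies on the fact that mutation at every indecomposable summand is defined and can be classified as either ``left'' or ``right'' (giving a down- or up-cover, respectively). For the gentle algebras $\Lambda(G)$ of interest in this paper, this can alternatively be verified combinatorially from the exchange description of adjacent cliques recalled in the remark following Definition~\ref{def:order}, but citing \cite{AIR} is sufficient for the general statement.
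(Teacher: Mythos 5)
Your proposal is correct and follows essentially the same route as the paper's proof: both rely on the identity $\textup{dcov}(T)=\textup{ucov}(\kappa(T))$ together with the bijectivity of $\kappa$ from Theorem~\ref{thm:kappa}, and on the $n$-regularity of the Hasse diagram of $\textup{st}(\Lambda)$ from \cite[Theorem 2.18]{AIR} to convert $\textup{ucov}$ back into $\textup{dcov}$. The only difference is presentational --- you phrase the conclusion as a generating-function identity $\textup{dcov}(\textup{st}(\Lambda))(x)=x^n\,\textup{dcov}(\textup{st}(\Lambda))(1/x)$, while the paper states the same content as a bijection between the level sets $\{T:\textup{dcov}(T)=r\}$ and $\{T:\textup{dcov}(T)=n-r\}$.
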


\begin{proof}
Let $T$ be a support $\tau$-tilting $\Lambda$-module. Then, $\textup{dcov}(T)=\textup{ucov}(\kappa(T))$ by the definition of the $\kappa$-map. 
Moreover, by Thoerem~\ref{thm:kappa} this map is a bijection on $\text{st}(\Lambda)$, so we obtain a bijection on the following subsets of support $\tau$-tilting modules for all $r$:
\[ \{T \in \text{st}(\Lambda) :  \textup{dcov}(T)=r\}  \longleftrightarrow \{T \in \text{st}(\Lambda) :  \textup{ucov}(T)=r\} .\]
By \cite[Theorem 2.18]{AIR}, the Hasse diagram of the $\tau$-titling poset is $n$-regular, where $n$ is the number of vertices of the quiver of $\Lambda$. 
This means that every vertex of $\text{st}(\Lambda)$ has exactly $n$ neighbors.
This implies that the following sets are equal:
\[   \{T \in \text{st}(\Lambda) :  \textup{ucov}(T)=r\}= \{T \in \text{st}(\Lambda) :  \textup{dcov}(T)=n-r\}.\] 
Therefore, there is a bijection between the modules in $\text{st}(\Lambda)$ that have $r$ down neighbors and the modules that have $n-r$ down neighbors.
This shows that the polynomial $\textup{dcov}(\text{st}(\Lambda))$ is symmetric.  
\end{proof}

\begin{remark} 
Although the kappa map is a bijection on $\textup{st}(\Lambda)$, in general it is not a poset morphism.
Consider the $\tau$-tilting poset of $G=G(2,7)$ in Figure~\ref{fig:G(2,7)poset}, and the four elements of the poset depicted in Figure~\ref{fig:notaposetmap}.
The maximal clique $\Delta_1$ covers $\Delta_2$ in the $\tau$-tilting poset, but $\Delta_3=\kappa(\Delta_1)$ does not cover $\Delta_4 = \kappa(\Delta_2)$.
\end{remark}

\begin{figure}
\centering
\scalebox{0.65}{
\begin{tikzpicture}
    \node[] at (4, 14) {$\Delta_1$};
    \node[] (3) at (6,13.5) {
    \begin{tikzpicture}
    \begin{scope}[scale=0.5, xshift=0, yshift=0]
    	\vertex[fill](as) at (1,0) {};
    	\vertex[fill](a1) at (2,0) {};
    	\vertex[fill](a2) at (3,0) {};
    	\vertex[fill](a3) at (4,0) {};
    	\vertex[fill](at) at (5,0) {};
    	\draw[thick] (a3)--(at);
        \node[] at (4.5,0.3) {\textcolor{cyan}{\tiny$1$}}; 
    	\draw[thick] (as) to[out=60,in=120] (a1);
    	\node[] at (1.5,0.7) {\textcolor{cyan}{\tiny$2$}}; 
    	\draw[thick] (a1) to[out=60,in=120] (a3);
    	\node[] at (3, 0.9)  {\textcolor{cyan}{\tiny$2$}}; 
    \end{scope}
    \begin{scope}[scale=0.5, xshift=0, yshift=40]
    	\vertex[fill](as) at (1,0) {};
    	\vertex[fill](a1) at (2,0) {};
    	\vertex[fill](a2) at (3,0) {};
    	\vertex[fill](a3) at (4,0) {};
    	\vertex[fill](at) at (5,0) {};
    	\draw[thick] (a1)--(a2);
    	\node[] at (2.5,0.3) {\textcolor{cyan}{\tiny$1$}};  
    	\draw[thick] (a2)--(a3);
    	\node[] at (3.5,0.3) {\textcolor{cyan}{\tiny$1$}};  
    	\draw[thick] (a3)--(at);
        \node[] at (4.5,0.3) {\textcolor{cyan}{\tiny$1$}}; 
    	\draw[thick] (as) to[out=60,in=120] (a1);
    	\node[] at (1.5,0.7) {\textcolor{cyan}{\tiny$2$}}; 
    \end{scope}
    \begin{scope}[scale=0.5, xshift=0, yshift=80]
    	\vertex[fill](as) at (1,0) {};
    	\vertex[fill](a1) at (2,0) {};
    	\vertex[fill](a2) at (3,0) {};
    	\vertex[fill](a3) at (4,0) {};
    	\vertex[fill](at) at (5,0) {};
    	\draw[thick] (a1)--(a2);
    	\node[] at (2.5,0.3) {\textcolor{cyan}{\tiny$1$}};  
    	\draw[thick] (a2) to[out=60,in=120] (at);
    	\node[] at (4,0.9) {\textcolor{cyan}{\tiny$2$}}; 
    	\draw[thick] (as) to[out=60,in=120] (a1);
    	\node[] at (1.5,0.7) {\textcolor{cyan}{\tiny$2$}}; 
    \end{scope}
    \end{tikzpicture}
    };  
    
    \node[] at (13, 10.5) {$\Delta_2$};
    \node[] (7) at (11,10.5) {
    \begin{tikzpicture}
    \begin{scope}[scale=0.5, xshift=0, yshift=40]
	    \vertex[fill](as) at (1,0) {};
	    \vertex[fill](a1) at (2,0) {};
    	\vertex[fill](a2) at (3,0) {};
    	\vertex[fill](a3) at (4,0) {};
    	\vertex[fill](at) at (5,0) {};
    	\draw[thick] (a1)--(a2);
    	\node[] at (2.5,0.3) {\textcolor{cyan}{\tiny$1$}};  
    	\draw[thick] (a2)--(a3);
    	\node[] at (3.5,0.3) {\textcolor{cyan}{\tiny$1$}};  
    	\draw[thick] (as) to[out=60,in=120] (a1);
    	\node[] at (1.5,0.7) {\textcolor{cyan}{\tiny$2$}}; 
    	\draw[thick] (a3) to[out=60,in=120] (at);
    	\node[] at (4.5,0.7) {\textcolor{cyan}{\tiny $2$}}; 
    \end{scope}
    \begin{scope}[scale=0.5, xshift=0, yshift=0]
    	\vertex[fill](as) at (1,0) {};
    	\vertex[fill](a1) at (2,0) {};
    	\vertex[fill](a2) at (3,0) {};
    	\vertex[fill](a3) at (4,0) {};
    	\vertex[fill](at) at (5,0) {};
    	\draw[thick] (a1)--(a2);
    	\node[] at (2.5,0.3) {\textcolor{cyan}{\tiny$1$}};  
    	\draw[thick] (a2)--(a3);
    	\node[] at (3.5,0.3) {\textcolor{cyan}{\tiny$1$}};  
    	\draw[thick] (a3)--(at);
        \node[] at (4.5,0.3) {\textcolor{cyan}{\tiny$1$}}; 
    	\draw[thick] (as) to[out=60,in=120] (a1);
    	\node[] at (1.5,0.7) {\textcolor{cyan}{\tiny$2$}}; 
    \end{scope}
    \begin{scope}[scale=0.5, xshift=0, yshift=80]
    	\vertex[fill](as) at (1,0) {};
    	\vertex[fill](a1) at (2,0) {};
    	\vertex[fill](a2) at (3,0) {};
    	\vertex[fill](a3) at (4,0) {};
    	\vertex[fill](at) at (5,0) {};
    	\draw[thick] (a1)--(a2);
    	\node[] at (2.5,0.3) {\textcolor{cyan}{\tiny$1$}};  
    	\draw[thick] (a2) to[out=60,in=120] (at);
    	\node[] at (4,0.9) {\textcolor{cyan}{\tiny$2$}}; 
    	\draw[thick] (as) to[out=60,in=120] (a1);
    	\node[] at (1.5,0.7) {\textcolor{cyan}{\tiny$2$}}; 
    \end{scope}
    \end{tikzpicture}
    };            

   \node[] at (-1, 10.5) {$\Delta_3$};
    \node[] (14) at (1,10.5) {
    \begin{tikzpicture}
    \begin{scope}[scale=0.5, xshift=0, yshift=0]
    	\vertex[fill](as) at (1,0) {};
    	\vertex[fill](a1) at (2,0) {};
    	\vertex[fill](a2) at (3,0) {};
    	\vertex[fill](a3) at (4,0) {};
    	\vertex[fill](at) at (5,0) {};
    	\draw[thick] (as)--(a1);
    	\node[] at (1.5,0.3) {\textcolor{cyan}{\tiny$1$}}; 
    	\draw[thick] (a1)--(a2);
    	\node[] at (2.5,0.3) {\textcolor{cyan}{\tiny$1$}};  
    	\draw[thick] (a2)--(a3);
    	\node[] at (3.5,0.3) {\textcolor{cyan}{\tiny$1$}};  
    	\draw[thick] (a3) to[out=60,in=120] (at);
    	\node[] at (4.5,0.7) {\textcolor{cyan}{\tiny $2$}}; 
    \end{scope}
    \begin{scope}[scale=0.5, xshift=0, yshift=40]
    	\vertex[fill](as) at (1,0) {};
    	\vertex[fill](a1) at (2,0) {};
    	\vertex[fill](a2) at (3,0) {};
    	\vertex[fill](a3) at (4,0) {};
    	\vertex[fill](at) at (5,0) {};
    	\draw[thick] (as)--(a1);
    	\node[] at (1.5,0.3) {\textcolor{cyan}{\tiny$1$}}; 
    	\draw[thick] (a1)--(a2);
    	\node[] at (2.5,0.3) {\textcolor{cyan}{\tiny$1$}};  
    	\draw[thick] (a2) to[out=60,in=120] (at);
    	\node[] at (4,0.9) {\textcolor{cyan}{\tiny$2$}}; 
    \end{scope}
    \begin{scope}[scale=0.5, xshift=0, yshift=80]
    	\vertex[fill](as) at (1,0) {};
    	\vertex[fill](a1) at (2,0) {};
    	\vertex[fill](a2) at (3,0) {};
    	\vertex[fill](a3) at (4,0) {};
    	\vertex[fill](at) at (5,0) {};
    	\draw[thick] (a1)--(a2);
    	\node[] at (2.5,0.3) {\textcolor{cyan}{\tiny$1$}};  
    	\draw[thick] (a2) to[out=60,in=120] (at);
    	\node[] at (4,0.9) {\textcolor{cyan}{\tiny$2$}}; 
    	\draw[thick] (as) to[out=60,in=120] (a1);
    	\node[] at (1.5,0.7) {\textcolor{cyan}{\tiny$2$}}; 
    \end{scope}
    \end{tikzpicture}
    };        
    
    \node[] at (8, 6.5) {$\Delta_4$};
    \node[] (11) at (6,7.5) {
    \begin{tikzpicture}
    \begin{scope}[scale=0.5, xshift=0, yshift=0]
	\vertex[fill](as) at (1,0) {};
	\vertex[fill](a1) at (2,0) {};
	\vertex[fill](a2) at (3,0) {};
	\vertex[fill](a3) at (4,0) {};
	\vertex[fill](at) at (5,0) {};
	\draw[thick] (as)--(a1);
	\node[] at (1.5,0.3) {\textcolor{cyan}{\tiny$1$}}; 
	\draw[thick] (a1)--(a2);
	\node[] at (2.5,0.3) {\textcolor{cyan}{\tiny$1$}};  
	\draw[thick] (a2)--(a3);
	\node[] at (3.5,0.3) {\textcolor{cyan}{\tiny$1$}};  
	\draw[thick] (a3) to[out=60,in=120] (at);
	\node[] at (4.5,0.7) {\textcolor{cyan}{\tiny $2$}}; 
    \end{scope}
    \begin{scope}[scale=0.5, xshift=0, yshift=40]
	\vertex[fill](as) at (1,0) {};
	\vertex[fill](a1) at (2,0) {};
	\vertex[fill](a2) at (3,0) {};
	\vertex[fill](a3) at (4,0) {};
	\vertex[fill](at) at (5,0) {};
	\draw[thick] (a2)--(a3);
	\node[] at (3.5,0.3) {\textcolor{cyan}{\tiny$1$}};  
    \draw[thick] (as) to[out=60,in=120] (a2);
    \node[] at (2,0.9) {\textcolor{cyan}{\tiny$2$}}; 
	\draw[thick] (a3) to[out=60,in=120] (at);
	\node[] at (4.5,0.7) {\textcolor{cyan}{\tiny $2$}}; 
    \end{scope}
    \begin{scope}[scale=0.5, xshift=0, yshift=80]
	\vertex[fill](as) at (1,0) {};
	\vertex[fill](a1) at (2,0) {};
	\vertex[fill](a2) at (3,0) {};
	\vertex[fill](a3) at (4,0) {};
	\vertex[fill](at) at (5,0) {};
	\draw[thick] (a1)--(a2);
	\node[] at (2.5,0.3) {\textcolor{cyan}{\tiny$1$}};  
	\draw[thick] (a2)--(a3);
	\node[] at (3.5,0.3) {\textcolor{cyan}{\tiny$1$}};  
	\draw[thick] (as) to[out=60,in=120] (a1);
	\node[] at (1.5,0.7) {\textcolor{cyan}{\tiny$2$}}; 
	\draw[thick] (a3) to[out=60,in=120] (at);
	\node[] at (4.5,0.7) {\textcolor{cyan}{\tiny $2$}}; 
    \end{scope}
    \end{tikzpicture}
    };

\draw[thick] (7)--(3);
\draw[-stealth, very thick] (3) to[out=-90,in=0] (14);
    \node[] at (4,11) {$\kappa$};
\draw[-stealth, very thick] (7) to[out=-90,in=0] (11);
    \node[] at (9,8) {$\kappa$};
\end{tikzpicture}
}
\caption{An example showing that $\kappa$ is not an order-preserving map on the $\tau$-tilting poset.}
\label{fig:notaposetmap}
\end{figure}
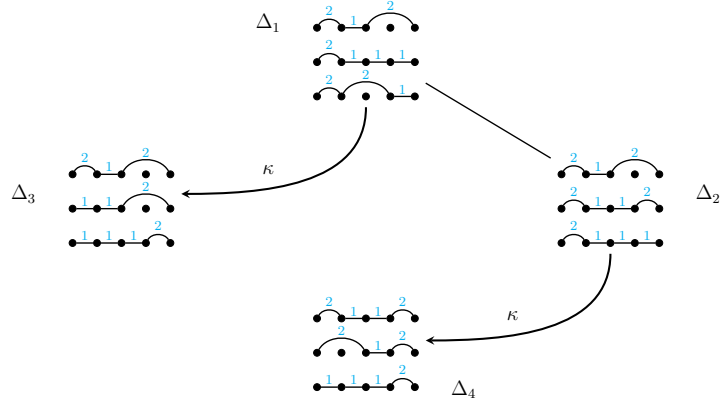


Given a lattice polytope $P$ of dimension $d$ in $\R^n$, the \emph{Ehrhart series} of $P$ is the rational generating function
\[
\mathrm{Ehr}_P(x)=\sum_{t\geq 0}|tP\cap \Z^{n}|x^t=\frac{\sum_{i=0}^dh_i^*x^i}{(1-x)^{d+1}}\, .
\]
The rationality of $\mathrm{Ehr}_P(x)$ is due to Ehrhart~\cite{Ehrhart}, and it is known by work of Stanley~\cite{StanleyDecompositions} that the vector of coefficients $h_P^*:=(h_0^*,\ldots,h_d^*)$, called the \emph{$h^*$-vector of $P$}, consists of nonnegative integers with $h^*_0 = 1$.
By defining the \emph{cone over $P$} to be 
\[
\cone(P):=\mathrm{span}_{\R_{\geq 0}}\{(1,\mathbf{p}):\mathbf{p}\in P\} \, ,
\]
one can show that the Ehrhart series for $P$ is the Hilbert series for the semigroup algebra of $\cone(P)$ with grading given by the first coordinate.

When $P$ admits a unimodular triangulation $T$, the $h^*$-vector of $P$ and the $h$-vector of $T$ coincide \cite[Theorem 10.3]{BR07}.
The $h$-vector of a shellable triangulation has nonnegative entries which can be computed combinatorially from the shelling order as follows. 
For a fixed shelling order $F_1,...,F_s$ on the facets of the triangulation, the restriction $R_j$ of the facet $F_j$ is defined to be the set 
\[
R_j := \{v\in F_j : v \text{ is a vertex in }F_j \text{ and } F_j \setminus v \subseteq F_i \text{ for some } 1 \leq i < j\} \, .
\]
The $i$-th entry of the $h$-vector is then given by $h_i = |\{j:|R_j| = i, 1 \leq j \leq s\}|$. 
Thus, if the dual graph of the triangulation admits the structure of a Hasse diagram of a poset with at least one linear extension giving a shelling order of the triangulation, then $h_i$ is the number of elements in the poset covering exactly $i$ elements.
Since the $\tau$-tilting complex is shellable by Theorem~\ref{thm:tau-shellable}, we obtain the following lemma. 

\begin{lemma}
\label{lem:h*coverposetcount}
Let $Q$ be the $\tau$-tilting poset associated with $\DKK(G,F)$.
Then the $i$-th coefficient of the $h^*$-vector of $\calF_1(G)$ is given by the number elements in $Q$ covering exactly $i$ elements.
\end{lemma}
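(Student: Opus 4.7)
The plan is to compute $h_i^*$ by identifying it with the $i$-th entry of the $h$-vector of the triangulation and then extracting that entry from a shelling of $\DKK(G,F)$. Because $\DKK(G,F)$ is a regular unimodular triangulation of $\calF_1(G)$ by Theorem~\ref{thm:DKKtriangulation}, we have $h_i^* = h_i$ for the triangulation. Theorem~\ref{thm:tau-shellable} then guarantees that any linear extension of the $\tau$-tilting poset $Q$ yields a shelling order $\Delta_1, \Delta_2, \ldots, \Delta_s$ of the facets of $\DKK(G,F)$. Applying the standard shellability formula, $h_i = |\{j : |R_j| = i\}|$, where $R_j$ is the restriction of $\Delta_j$. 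The entire task therefore reduces to showing $|R_j|$ equals the number of elements of $Q$ that $\Delta_j$ covers.

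The key identification proceeds as follows. A route $R \in \Delta_j$ lies in $R_j$ exactly when $\Delta_j \setminus \{R\} \subseteq \Delta_i$ for some $i<j$. Since $\DKK(G,F)$ is pure (all facets have the same cardinality), such a $\Delta_i$ must be obtained from $\Delta_j$ by a single route-exchange; i.e., $\Delta_i$ is \emph{adjacent} to $\Delta_j$ in the dual graph. Therefore $R \in R_j$ if and only if $\Delta_j$ has an adjacent facet that precedes it in the chosen linear extension of $Q$. The remaining task is to argue that these preceding adjacent facets are exactly the down-covers of $\Delta_j$ in the Hasse diagram of $Q$.

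To establish this, I would invoke two facts: (1) by construction of $\prec$ in Definition~\ref{def:order}, every pair of adjacent facets is comparable, with the ``down'' direction prescribed by the local data around the unique component $w$; and (2) the Hasse diagram of the $\tau$-tilting poset of $\Lambda(G)$ coincides with the mutation graph of support $\tau$-tilting modules. Fact (2) is a standard property of $\tau$-tilting posets (it is implicit in the $n$-regularity of the Hasse diagram used in the proof of Theorem~\ref{thm:downsymmetry} and in the kappa-map formalism), and under the bijection of Theorem~\ref{maximal_cliques} it says that adjacencies in the dual graph of $\DKK(G,F)$ are precisely the cover relations of $Q$. Combined, (1) and (2) imply that in any linear extension of $Q$, the adjacent facets of $\Delta_j$ preceding $\Delta_j$ in the order are exactly its down-covers. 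Hence $|R_j|$ equals the number of elements covered by $\Delta_j$ in $Q$, and summing over facets yields the lemma.

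The principal obstacle is confirming that every adjacency in the dual graph is in fact a cover relation in $Q$, rather than merely a comparability obtained through transitive closure; one must verify that $\prec'$ contains no redundant relations that would be absorbed into longer chains in $\prec$. This is the content of the structural result that the Hasse diagram of the $\tau$-tilting poset of a $\tau$-tilting finite gentle algebra coincides with its mutation graph, and this is the fact that justifies the entire correspondence between restrictions and down-covers. Once this is in place, the remainder of the proof is the routine combinatorial accounting described above.
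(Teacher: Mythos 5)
Your proposal is correct and follows essentially the same route as the paper: identify $h^*$ with the $h$-vector of the unimodular triangulation, shell via a linear extension of the $\tau$-tilting poset (Theorem~\ref{thm:tau-shellable}), and match restriction sets with down-covers, using the fact that the Hasse diagram of the $\tau$-tilting poset coincides with the dual (mutation) graph of the triangulation. The one point you flag as needing care --- that dual-graph adjacencies are exactly cover relations and not merely comparabilities --- is precisely the structural fact the paper relies on when it states that the Hasse diagram of $\mathrm{st}(\Lambda(G))$ is the dual graph of the $\tau$-tilting complex, so your argument is complete.
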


\begin{corollary}
\label{cor:h*symmetry}
If $G$ is a full DAG, then $\calF_1(G)$ has a symmetric $h^*$-vector.
\end{corollary}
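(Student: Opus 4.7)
The plan is essentially to assemble the preceding results into a short chain of identifications. Since $G$ is full, Theorem~\ref{thm:ampleexistence} guarantees that $G$ admits an ample framing, so fix any such framing $F$. By Theorem~\ref{thm:DKKtriangulation}, this produces a regular unimodular triangulation $\DKK(G,F)$ of $\calF_1(G)$, and by Stanley's theorem \cite[Theorem 10.3]{BR07} the $h^\ast$-vector of $\calF_1(G)$ coincides with the $h$-vector of the triangulation.

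Next, I would transport the structure of the $\tau$-tilting poset $\text{st}(\Lambda(G))$ onto the set of facets of $\DKK(G,F)$ via the bijection of Theorem~\ref{maximal_cliques}. Theorem~\ref{thm:tau-shellable} then supplies a shelling of $\DKK(G,F)$ along any linear extension of this poset. Lemma~\ref{lem:h*coverposetcount} identifies the $i$-th entry of the $h$-vector obtained from such a shelling as $\#\{T \in \text{st}(\Lambda(G)) : \textup{dcov}(T) = i\}$, so combining these steps yields the polynomial identity
\[
\sum_{i \geq 0} h^\ast_i \, x^i \;=\; \textup{dcov}(\text{st}(\Lambda(G))).
\]

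Finally, Theorem~\ref{thm:downsymmetry} asserts that the right-hand side is a palindromic polynomial, which immediately forces symmetry of $(h^\ast_0, h^\ast_1, \dots)$. There is no substantive obstacle at this stage: the entire deduction is a bookkeeping exercise, with the genuine content already residing in the bijection of Theorem~\ref{maximal_cliques}, the shellability statement of Theorem~\ref{thm:tau-shellable}, and especially the kappa-map bijection of Theorem~\ref{thm:kappa} which underlies the symmetry in Theorem~\ref{thm:downsymmetry}. The only point to flag carefully is that symmetry here should be read in the palindromic sense on the support $\{0, 1, \dots, n\}$ where $n$ is the number of inner vertices of $G$, since $\textup{dcov}(T) \leq n$ forces $h^\ast_i = 0$ for $i > n$, and this support could be strictly smaller than $\dim \calF_1(G)$.
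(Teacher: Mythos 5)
Your proposal is correct and follows exactly the paper's route: the paper's proof is the one-line combination of Lemma~\ref{lem:h*coverposetcount} and Theorem~\ref{thm:downsymmetry}, and your argument simply unpacks the supporting chain (existence of an ample framing, the DKK triangulation, the bijection with the $\tau$-tilting poset, and shellability) that those two results already encapsulate. Your closing remark about reading symmetry palindromically on $\{0,\dots,n\}$ with $n$ the number of inner vertices is a correct and worthwhile clarification, consistent with the convention in Theorem~\ref{thm:gorensteinsymmetry}.
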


\begin{proof}
This follows from Lemma~\ref{lem:h*coverposetcount} and Theorem~\ref{thm:downsymmetry}.
\end{proof}

\begin{example}
Consider the full contraction of $G(2,7)$ with the framing specified in Figure~\ref{fig:G(2,7) and Q}, having $\tau$-tilting poset as given in Figure~\ref{fig:G(2,7)poset}.
By inspection, for this poset we have 
\[
\textup{dcov}(P) = \sum_{p\,\in \, P} x^{\textup{dcov}(p)}=1+7x+7x^2+x^3 \, ,
\]
since for example there are seven elements of the poset covering a single element, one element covering three elements, etc.
Thus, the $h^*$-polynomial for the corresponding flow polytope is symmetric and unimodal.
\end{example}

The symmetry of the $h^*$-polynomial of a lattice polytope has known geometric consequences, which we discuss next.

\begin{definition}\label{def:gorensteinpolytope}
A lattice polytope $P$ is \emph{reflexive} if there exists an integer vector $\mathbf{v}$ and an integer matrix $A$ such that $P+\mathbf{v}=\{\bx \in \R^n: A\bx \leq \mathbf{1}\}$, where $\mathbf{1}$ denotes the vector with all entries equal to $1$.
A lattice polytope $P\subset \R^n$ is \emph{Gorenstein of index $k$} if $kP$ is a reflexive polytope.
\end{definition}

In greater generality, a pointed rational cone $C$ is \emph{Gorenstein} if there exists an integer point $\mathbf{c}$ satisfying
\[
\mathbf{c}+\left( \Z^{1+n}\cap C \right) = \Z^{1+n}\cap C^\circ,
\]
where $C^\circ$ denotes the interior of the cone $C$.
It is known that $P$ is Gorenstein if and only if $\cone(P)$ is Gorenstein.
An alternative characterization of the Gorenstein condition is the following.

\begin{lemma}[Bruns, R\"omer~\cite{BrunsRomer}]\label{lemma:gor}
Let $C$ be a pointed rational cone with supporting hyperplanes of the form $\sigma\cdot x\geq 0$ where $\sigma$ is a vector of integers such that the greatest common divisor of the entries in $\sigma$ is $1$.
For such a cone, $C$ is Gorenstein if and only if there exists an integer point $\bc$ in the interior of $C$ such that $\sigma\cdot \bc=1$ for all supporting hyperplanes $\sigma$ of $C$.
\end{lemma}

This lemma leads to a characterization of Gorenstein flow polytopes.

\begin{proposition}\label{prop:gor}
The flow polytope $\calF_1(G)$ is Gorenstein if and only if $G$ is a DAG such that for each inner vertex $v$ of $G$ the in-degree and out-degree of $v$ are equal.
\end{proposition}

\begin{proof}
Note that the cone over $\calF_1(G)$ is equivalent to $\calF(G)$, hence we work in this setting.
Since the supporting hyperplanes of $\calF(G)$ are all of the form $x_e\geq 0$ for the edges $e$ of $G$, by Lemma~\ref{lemma:gor} the only candidate for a Gorenstein point $\bc$ is the all-ones vector. 
The all-ones vector is in $\calF(G)$ if and only if the equality of in- and out-degree holds for each inner vertex $v$ of $G$.
\end{proof}

Yet another classification of Gorenstein polytopes is provided by symmetry of coefficients of $h^*$-polynomials, as follows.
\begin{theorem}[Stanley~\cite{stanleyhilbertgraded}]\label{thm:gorensteinsymmetry}
A $d$-dimensional lattice polytope $P$ with 
\[
h^*_P=(h_1^*,\ldots,h_s^*,0,\ldots,0)\in \Z^d \, ,
\]
where $h^*_s\neq 0$, is Gorenstein if and only if $h^*_i=h^*_{s-i}$ for all $i$.
\end{theorem}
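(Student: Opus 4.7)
My plan is to derive the equivalence from Ehrhart--Macdonald reciprocity combined with the translation of the Gorenstein property into a generating-function identity relating $\mathrm{Ehr}_P$ to the interior Ehrhart series.

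First, I would apply Ehrhart--Macdonald reciprocity, which states
\[
\sum_{t \geq 1}|tP^{\circ} \cap \Z^n|\, x^t = (-1)^{d+1}\mathrm{Ehr}_P(1/x).
\]
Substituting $\mathrm{Ehr}_P(x) = h^{\ast}(x)/(1-x)^{d+1}$ and using the identity $(1-1/x)^{d+1} = (-1)^{d+1}(1-x)^{d+1}/x^{d+1}$ gives
\[
\sum_{t \geq 1}|tP^{\circ} \cap \Z^n|\, x^t = \frac{x^{d+1-s}\,(h_s^{\ast} + h_{s-1}^{\ast}x + \cdots + h_0^{\ast}x^s)}{(1-x)^{d+1}},
\]
where $s = \deg h^{\ast}$. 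Thus the numerator on the right is the coefficient-reversal of $h^{\ast}$, shifted by the \emph{codegree} $d+1-s$, which is also the smallest positive dilate of $P$ containing an interior lattice point.

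Second, I would establish the key reformulation that $P$ is Gorenstein of index $k$ if and only if
\[
\sum_{t \geq 1}|tP^{\circ} \cap \Z^n|\, x^t = x^k \,\mathrm{Ehr}_P(x).
\]
For the forward direction, if $kP$ is reflexive then it has a unique interior lattice point $\bv$, and the translation $\bx \mapsto \bx + \bv$ gives a bijection $tP \cap \Z^n \to (t+k)P^{\circ} \cap \Z^n$ for every $t\geq 0$, yielding the identity. For the reverse direction, the identity forces $k$ to equal the codegree, forces $kP$ to contain a unique interior lattice point $\bv$, and further forces the translate $kP-\bv$ to have all facets lying on affine hyperplanes at lattice distance one from the origin, which is exactly the reflexivity condition.

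Third, I would combine the two formulas for the interior Ehrhart series to pin down both $k$ and the symmetry of $h^{\ast}$. If $P$ is Gorenstein of index $k$, comparing
\[
x^k h^{\ast}(x) = x^{d+1-s}\,(h_s^{\ast} + h_{s-1}^{\ast}x + \cdots + h_0^{\ast}x^s)
\]
forces $k = d+1-s$ and $h_i^{\ast} = h_{s-i}^{\ast}$ for all $i$. The converse runs in reverse: if $h^{\ast}$ is palindromic, then the two numerators agree, so the interior series equals $x^{d+1-s}\mathrm{Ehr}_P(x)$, placing us in the Gorenstein case of step two.

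The main obstacle is the reverse direction of step two, namely extracting a geometric reflexivity condition from a generating-function identity. The most robust route is to pass to the graded affine semigroup algebra $R = K[\cone(P) \cap \Z^{n+1}]$, which is Cohen--Macaulay by Hochster's theorem since the semigroup is normal. The interior-series identity then translates into a Hilbert-series symmetry of the form $H_R(1/x) = (-1)^{\dim R}x^a H_R(x)$, which by Stanley's criterion for graded Cohen--Macaulay domains is equivalent to $R$ being Gorenstein. Unwinding this back to the polytope yields the reflexivity of $kP$ after translation, completing the equivalence.
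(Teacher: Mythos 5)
The paper does not prove this statement; it is imported verbatim from Stanley with a citation, so there is no internal proof to compare against. Your argument is a correct reconstruction of the standard proof: the reciprocity computation in your first step is right, the comparison of numerators in your third step correctly pins down the index $k=d+1-s$ and the palindromicity, and your forward direction of step two is fine. The only place needing care is exactly where you flag it, the reverse direction of step two: the counting identity alone gives a unique interior lattice point of $kP$ but does not directly yield the facet condition, so your pivot to the algebraic route is the right move. To make that route airtight you should name the one ingredient left implicit in ``unwinding back to the polytope'': the Danilov--Stanley description of the canonical module of the normal semigroup ring $K[\cone(P)\cap\Z^{n+1}]$ as the ideal spanned by the interior lattice points of the cone, which is what converts ``$R$ is Gorenstein'' (equivalently, the canonical module is principal, generated by some $\mathbf{c}$) into the combinatorial condition $\mathbf{c}+(\Z^{1+n}\cap C)=\Z^{1+n}\cap C^{\circ}$ that the paper uses as its definition of a Gorenstein cone. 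With that reference added, the proof is complete; note also that the paper's indexing $h^*_P=(h_1^*,\ldots)$ is a typo for $(h_0^*,\ldots)$, and your proof correctly uses $h_0^*=1$.
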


We therefore have two proofs of the following theorem.

\begin{theorem}\label{thm:fullgorenstein}
If $G$ is a full DAG, then $\calF_1(G)$ is Gorenstein.
\end{theorem}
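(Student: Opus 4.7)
The plan is to assemble the proof directly from two results already established in the excerpt: Corollary \ref{cor:h*symmetry}, which asserts that $\calF_1(G)$ has a symmetric $h^\ast$-vector, and Stanley's Theorem \ref{thm:gorensteinsymmetry}, which characterizes Gorenstein lattice polytopes as exactly those with symmetric $h^\ast$-vectors. Since these two ingredients fit together with no extra work, the proof is essentially a one-line invocation.

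First I would recall that because $\calF_1(G)$ admits the regular unimodular triangulation $\DKK(G,F)$ from Theorem \ref{thm:DKKtriangulation}, its $h^\ast$-vector coincides with the $h$-vector of this triangulation. The shellability of the $\tau$-tilting complex (Theorem \ref{thm:tau-shellable}) together with Lemma \ref{lem:h*coverposetcount} then reinterprets the entry $h^\ast_i$ as the number of support $\tau$-tilting modules of $\Lambda(G)$ that cover exactly $i$ elements in the $\tau$-tilting poset. The kappa map produces the bijection behind Theorem \ref{thm:downsymmetry}, which delivers the palindromicity of the resulting $\textup{dcov}$ polynomial, hence of $h^\ast_{\calF_1(G)}$.

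With this symmetry in hand via Corollary \ref{cor:h*symmetry}, it remains only to apply Theorem \ref{thm:gorensteinsymmetry}: writing $h^\ast_{\calF_1(G)} = (h^\ast_0, h^\ast_1, \ldots, h^\ast_s, 0, \ldots, 0)$ with $h^\ast_s \neq 0$, the identity $h^\ast_i = h^\ast_{s-i}$ is precisely Stanley's criterion for $\calF_1(G)$ to be Gorenstein. There is no genuine obstacle at this stage; the substantive content of the argument was carried out earlier in establishing the bijection with support $\tau$-tilting modules and the kappa-map symmetry, and this theorem simply reaps the geometric consequence.
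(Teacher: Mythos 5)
Your proof is correct and matches the paper's argument exactly: the paper's proof of Theorem~\ref{thm:fullgorenstein} is precisely to combine Corollary~\ref{cor:h*symmetry} with Stanley's criterion in Theorem~\ref{thm:gorensteinsymmetry}. Your additional recap of where the $h^\ast$-symmetry comes from (shellability, Lemma~\ref{lem:h*coverposetcount}, and the kappa map) is accurate background but not needed beyond the two cited results.
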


\begin{proof}
For the first proof, apply Corollary~\ref{cor:h*symmetry} and Theorem~\ref{thm:gorensteinsymmetry} regarding the symmetry of the $h^*$-vector.
For the second proof, since the in- and out-degrees of every interior vertex are equal to $2$, Proposition~\ref{prop:gor} is satisifed.
\end{proof}

\begin{remark}\label{rem:Gkn+k}
In~\cite[Exercise 4.56(c)]{stanleyEC1}, Stanley introduces a class of polytopes commonly referred to as consecutive coordinate polytopes.
Theorem~\ref{thm:fullgorenstein} generalizes a result of Ayyer, Josuat-Verg\`{e}s, and Ramassamy~\cite[Theorem 2.10]{AJR} which states the consecutive coordinate polytope, denoted $\widehat{B}_{k,n}$, has a palindromic $h^*$-vector.
They give a formula for computing the $h^*$-polynomial of $\widehat{B}_{k,n}$ as the generating function of total cyclic orders with resepct to the number of descents.
It was shown in~\cite{GHMY} that $\widehat{B}_{k,n}$ is integrally equivalent to the flow polytope for the DAG $G(k,n+k)$ studied in Section~\ref{sec:enumerating}, and furthermore, a consequence of \cite[Theorem 4.8]{GHMY} and Theorem~\ref{maximal_cliques} gives a bijection between total cyclic orders $A_{k, n+k}$ considered by Ayyer et al., and support $\tau$-tilting modules $\textup{st}(\Lambda(G(k,n+k)))$.
Interestingly, this bijection is not weight-preserving between descents of total cyclic orders and $\textup{dcov}$ of elements in the $\tau$-tilting poset.
Ayyer et al. ask if the palindromicity result of the $h^*$-vector of $\widehat{B}_{k,n}$ can be understood on a combinatorial level via an involution on total cyclic orders.  
The kappa map on $\textup{st}(\Lambda)$ provides such an answer in terms of support $\tau$-tilting modules.
\end{remark}

The Gorenstein condition on $\calF_1(G)$ combined with the fact that the DKK triangulation is regular and unimodular allows us to apply the following theorem from~\cite[Theorem 1]{BrunsRomer}.

\begin{theorem}[Bruns, R\"{o}mer~\cite{BrunsRomer}] \label{thm:brunsroemer}
Let $P$ be a lattice polytope such that $P$ admits a regular unimodular triangulation and $P$ is Gorenstein.
Then the $h^*$-vector for $P$ is unimodal.
\end{theorem}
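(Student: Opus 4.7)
The plan is to interpolate between combinatorial and algebraic viewpoints via a flat degeneration induced by the regular unimodular triangulation, then convert palindromicity of the $h^*$-vector (obtained from the Gorenstein hypothesis) into unimodality via auxiliary geometric structure. Let $\Delta$ be the regular unimodular triangulation of $P$. By Sturmfels' theorem on Gr\"obner bases of toric ideals, there is a term order under which the initial ideal of the toric ideal $I_P$ equals the Stanley--Reisner ideal $I_\Delta$. This yields a flat degeneration from the Ehrhart ring $K[P]$ to the Stanley--Reisner ring $K[\Delta]$, so the two rings share the same Hilbert series; hence $h^*(P) = h(\Delta)$. Thus it suffices to prove that the $h$-vector of the simplicial complex $\Delta$ is unimodal; note that $\Delta$ is shellable because regular triangulations are shellable, and in particular $K[\Delta]$ is Cohen--Macaulay.

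Since $P$ is Gorenstein, $K[P]$ is a standard graded Gorenstein normal domain, and by Stanley's classical theorem on Hilbert series of graded Gorenstein algebras, its $h$-vector is palindromic. Hence $h(\Delta)$ is palindromic. The task then reduces to extracting unimodality from palindromicity together with the specific geometric structure of $\Delta$ as a regular unimodular triangulation of a Gorenstein lattice polytope. Here I would construct, from $\Delta$ and the Gorenstein data on $P$, an auxiliary Gorenstein$^*$ simplicial complex: for instance, by exploiting the self-dual interior-lattice-point structure guaranteed by the Gorenstein property of $P$ to produce a simplicial sphere whose $h$-vector is palindromic and differs from $h(\Delta)$ by a controlled transformation. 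One would then invoke a unimodality result applicable in this class, such as the $g$-theorem of Stanley and Billera--Lee, or the hard Lefschetz theorem for projective toric varieties associated with the Gorenstein cone $\cone(P)$.

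The main obstacle is precisely this last step, because unimodality for $h$-vectors of general Gorenstein$^*$ simplicial complexes is an open question closely tied to the generalized lower bound conjecture. The argument must therefore use in an essential way the combination of three ingredients---regularity, unimodularity, and Gorenstein-ness of the underlying polytope---to remain within a class where unimodality is known. In particular, an entirely combinatorial route starting only from palindromicity and shellability of $\Delta$ is not expected to suffice; the input from toric geometry (or equivalently, a careful analysis of the canonical module of $K[\Delta]$ inherited from $K[P]$ through the degeneration) appears unavoidable, and this is exactly where Bruns and R\"omer's technique does its work.
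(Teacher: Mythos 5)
This statement is quoted in the paper from Bruns and R\"omer (it is their Theorem 1); the paper supplies no proof of its own, only the remark after Definition~\ref{def:specialsimplex} that the key ingredient is projection along a special simplex. Your proposal is therefore being measured against the Bruns--R\"omer argument itself, and against that standard it has a genuine gap. The first two steps are fine: for a regular unimodular triangulation $\Delta$ the toric ideal degenerates to the Stanley--Reisner ideal, so $h^*(P)=h(\Delta)$, and the Gorenstein hypothesis gives palindromicity of $h^*(P)$ by Stanley's theorem. But the decisive step --- producing a simplicial complex in a class where unimodality is actually \emph{known} --- is only gestured at. You propose to ``construct an auxiliary Gorenstein$^*$ complex'' and then invoke the $g$-theorem or hard Lefschetz, while simultaneously (and correctly) observing that unimodality of $h$-vectors of general Gorenstein$^*$ complexes is open. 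That is precisely the hole: landing in the Gorenstein$^*$ class is not enough, and nothing in your sketch forces the auxiliary complex to be the boundary complex of a simplicial polytope, which is the only class for which the $g$-theorem applies.

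The missing idea is the special simplex technique of Athanasiadis and Reiner--Welker, as executed by Bruns--R\"omer: since $P$ is Gorenstein and has the integer decomposition property (guaranteed by the unimodular triangulation), there is a special simplex $\Sigma$ whose vertex set can be taken to be a face of $\Delta$; then $\Delta$ is the join of $\Sigma$ with $\mathrm{link}_\Delta(\Sigma)$, so $h(\Delta)=h(\mathrm{link}_\Delta(\Sigma))$, and projecting $P$ along the affine span of $\Sigma$ yields a reflexive polytope carrying an induced regular unimodular triangulation whose boundary restriction identifies $\mathrm{link}_\Delta(\Sigma)$ combinatorially with the boundary complex of a simplicial polytope. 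Only at that point does the $g$-theorem (equivalently, hard Lefschetz for the associated projective toric variety) yield unimodality. Your proposal names the right final tool but never constructs the object to which it applies, and explicitly concedes that this construction is ``the main obstacle''; as written, the argument does not close.
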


\begin{corollary}\label{cor:fullhstarunimodal}
If $G$ is a full DAG, then $\calF_1(G)$ is $h^*$-unimodal.
\end{corollary}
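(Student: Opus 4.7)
The plan is to combine three results already in hand, and the proof will be essentially immediate. First, I would invoke Theorem~\ref{thm:fullgorenstein}, which established that $\calF_1(G)$ is Gorenstein whenever $G$ is full; recall that this conclusion came from the symmetry of the $h^*$-vector (Corollary~\ref{cor:h*symmetry}), which in turn traces back through the shellability of the $\tau$-tilting complex (Theorem~\ref{thm:tau-shellable}) and the symmetry statement Theorem~\ref{thm:downsymmetry} obtained from the kappa map bijection. Second, I would observe that $\calF_1(G)$ admits a regular unimodular triangulation: since $G$ is full, Theorem~\ref{thm:ampleexistence} produces an ample framing $F$, and then Theorem~\ref{thm:DKKtriangulation} asserts that $\DKK(G,F)$ is a regular unimodular triangulation of $\calF_1(G)$.

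Third, I would feed both ingredients into the Bruns--R\"omer theorem (Theorem~\ref{thm:brunsroemer}): a Gorenstein lattice polytope that admits a regular unimodular triangulation has a unimodal $h^*$-vector. Applied to $\calF_1(G)$, this gives the desired unimodality.

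There is no serious obstacle in this deduction, since all of the hard work was done earlier in the paper---in particular, establishing the $\tau$-tilting interpretation of maximal cliques (Theorem~\ref{maximal_cliques}), the bijectivity of the kappa map (Theorem~\ref{thm:kappa}), and the resulting symmetry of the down-cover statistic (Theorem~\ref{thm:downsymmetry}). The corollary is then a one-line consequence, and the only care needed is to cite the correct hypotheses of Bruns--R\"omer: Gorenstein plus regular unimodular triangulation, both of which have already been verified in the preceding paragraphs.
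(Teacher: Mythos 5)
Your proposal is correct and matches the paper's own proof: both combine the Gorenstein property (Theorem~\ref{thm:fullgorenstein}) with the existence of a regular unimodular DKK triangulation and then apply Theorem~\ref{thm:brunsroemer}. Your explicit citation of Theorem~\ref{thm:ampleexistence} to guarantee an ample framing exists is a small but welcome extra precision over the paper's wording.
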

\begin{proof}
Recall that $\calF_1(G)$ admits a regular unimodular triangulation via Theorem~\ref{thm:DKKtriangulation}. Thus, the result follows from Theorem~\ref{thm:gorensteinsymmetry} and Theorem~\ref{thm:brunsroemer}.
\end{proof}

It would be of interest to investigate other properties such as log-concavity, real-rootedness, and $\gamma$-non-negativity for these polytopes.

From our enumeration of ample framings, we can show that the Gorenstein polytope $\calF_1(G)$ for a full DAG $G$ has many special simplices, defined as follows.
The concept of a special simplex was originated by Athanasiadis~\cite{Athanasiadis}.

\begin{definition}
\label{def:specialsimplex}
Given a Gorenstein polytope $P$ and a simplex $S$ with vertices lattice points in $P$, we say $S$ is \emph{special} if the intersection of any facet of $P$ with $S$ is a facet of $S$.
\end{definition}

It is known that given a Gorenstein polytope $P$ having the integer decomposition property and a special simplex $S$, projecting $P$ along the affine span of $S$ yields a reflexive polytope with the same $h^*$-vector as $P$; this is the key ingredient of the proof of Theorem~\ref{thm:brunsroemer} by Bruns and R\"{o}mer~\cite[Corollary 4]{BrunsRomer}.
It is known that every lattice polytope with a unimodular triangulation has the integer decomposition property.
Thus, understanding the special simplices in polytopes with regular triangulations is of interest.

\begin{theorem}
\label{thm:fullspecial}
Given a full DAG $G$ with an ample framing $F$, the set of exceptional routes forms a special simplex for $\calF_1(G)$.
\end{theorem}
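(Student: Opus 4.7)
\textbf{Proof plan for Theorem~\ref{thm:fullspecial}.} The plan is to verify the two requirements of Definition~\ref{def:specialsimplex} directly. First I would check that $\conv\{v_R : R \in \calE\}$ is indeed a simplex in $\calF_1(G)$, and then show that for every facet $H$ of $\calF_1(G)$, the intersection $H \cap \conv(\calE)$ is exactly the convex hull of all exceptional routes avoiding a single edge, which is a facet of this simplex.

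For the first point, recall from the discussion after Definition~\ref{def:DKKtriangulation} that $\calE$ is contained in every maximal clique of $\DKK(G,F)$. Since each maximal clique is a simplex of the regular unimodular triangulation $\DKK(G,F)$ of $\calF_1(G)$ guaranteed by Theorem~\ref{thm:DKKtriangulation}, the vertices $\{v_R : R \in \calE\}$ are the vertices of a common face of this simplicial complex, and are in particular affinely independent. Thus $\conv(\calE)$ is a simplex of dimension $|\calE|-1$.

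For the second point, I would identify the facets of $\calF_1(G)$. Since $G$ is full, it has no idle edges, so Proposition~\ref{prop:contractidle} applied to $G$ (which is its own complete contraction) implies that the facets of $\calF_+(G)$ are exactly the hyperplanes $\{x_e = 0\}$ as $e$ ranges over edges of $G$; intersecting with the affine hyperplane defining flow size one gives the facets $H_e$ of $\calF_1(G)$. Now fix such an edge $e$. By Theorem~\ref{thm:uniqueexceptional}, there is a unique exceptional route $R_e$ containing $e$. For any $x = \sum_{R\in\calE}\lambda_R v_R \in \conv(\calE)$ with $\lambda_R \geq 0$ and $\sum \lambda_R = 1$, the coordinate $x_e$ equals $\sum_{R \ni e} \lambda_R = \lambda_{R_e}$. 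Hence $x \in H_e$ if and only if $\lambda_{R_e} = 0$, which is exactly the condition $x \in \conv(\calE \setminus \{R_e\})$.

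Since removing a single vertex from a simplex yields a facet of that simplex, $H_e \cap \conv(\calE) = \conv(\calE \setminus \{R_e\})$ is a facet of $\conv(\calE)$. As this holds for every facet of $\calF_1(G)$, the simplex $\conv(\calE)$ is special. The main obstacle is mostly bookkeeping: one must confirm that the cone facets $\{x_e = 0\}$ all remain facets after slicing to flow size one (which follows from the fact that $\calF_1(G)$ is a transverse slice of the pointed cone $\calF_+(G)$), and that the characterization in Theorem~\ref{thm:uniqueexceptional} gives precisely one exceptional route through each edge so that the intersection deletes exactly one vertex of the simplex.
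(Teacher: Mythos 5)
Your proposal is correct and follows essentially the same route as the paper's proof: identify the facets of $\calF_1(G)$ with the hyperplanes $x_e=0$ via Proposition~\ref{prop:contractidle}, and use Theorem~\ref{thm:uniqueexceptional} to see that each such facet excludes exactly one exceptional route. You supply two details the paper leaves implicit --- the affine independence of the exceptional routes (via their membership in every maximal clique of the unimodular triangulation) and the explicit convex-combination computation showing $H_e\cap\mathrm{conv}(\calE)=\mathrm{conv}(\calE\setminus\{R_e\})$ --- but these are elaborations of the same argument rather than a different approach.
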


\begin{proof}
Let $P=\calF_1(G)$.
By Proposition~\ref{prop:contractidle}, every facet of $P$ is of the form $x_e= 0$.
Since $F$ is an ample framing, every edge is contained in a unique exceptional route $R$.
Thus, for each facet $H$ of $P$ with $x_e=0$, the vertices of $P$ contained in $H$ are exactly those routes that do not contain $e$.
Since there is exactly one exceptional route containing $e$, say $R_e$, all exceptional routes except $R_e$ are contained in $H$.
\end{proof}

\begin{example}
In Figure~\ref{fig:G(2,7) exceptionals}, we see the three exceptional routes for the framed DAG from Figure~\ref{fig:G(2,7) and Q}.
Note that the facets for the flow polytope of $G(2,7)$ correspond to $x_e=0$ for each edge $e$.
For each fixed edge $e$ in $G(2,7)$, two of the routes in Figure~\ref{fig:G(2,7) exceptionals} do not contain that edge, and hence the line segment between those two routes in the flow polytope are contained in the corresponding facet.
Thus, every facet of the flow polytope for $G(2,7)$ intersects the triangle formed by $R_1$, $R_2$, and $R_3$ in an edge, making this triangle a special simplex.
\end{example}
 
 \begin{figure}
\begin{center}
    \begin{tikzpicture}
\begin{scope}[scale=1, xshift=-150, yshift=0]
	\vertex[fill,label=below:\footnotesize{$s$}](as) at (1,0) {};
 	\vertex[fill,label=below:\footnotesize{$1$}](a1) at (2,0) {};
	\vertex[fill,label=below:\footnotesize{$2$}](a2) at (3,0) {};
 	\vertex[fill,label=below:\footnotesize{$3$}](a3) at (4,0) {};
	\vertex[fill,label=below:\footnotesize{$t$}](at) at (5,0) {};
	

	\draw[-stealth, thick] (as) to[out=60,in=120] (a2);
	\draw[-stealth, thick] (a2) to[out=60,in=120] (at);

	
    
    \node[] at (2,0.75) {\textcolor{cyan}{\tiny$2$}}; 
    \node[] at (4,0.75) {\textcolor{cyan}{\tiny$2$}}; 
    
    
    \node[] at (1,1) {$R_1$};

\end{scope}

\begin{scope}[scale=1, xshift=0, yshift=0]
	\vertex[fill,label=below:\footnotesize{$s$}](as) at (1,0) {};
	\vertex[fill,label=below:\footnotesize{$1$}](a1) at (2,0) {};
	\vertex[fill,label=below:\footnotesize{$2$}](a2) at (3,0) {};
	\vertex[fill,label=below:\footnotesize{$3$}](a3) at (4,0) {};
	\vertex[fill,label=below:\footnotesize{$t$}](at) at (5,0) {};
	
	\draw[-stealth, thick] (as)--(a1);
	\draw[-stealth, thick] (a1)--(a2);
	\draw[-stealth, thick] (a2)--(a3);
	\draw[-stealth, thick] (a3)--(at);


	
    \node[] at (1.5,0.15) {\textcolor{cyan}{\tiny$1$}}; 
    \node[] at (2.5,0.15) {\textcolor{cyan}{\tiny$1$}}; 
    \node[] at (3.5,0.15) {\textcolor{cyan}{\tiny$1$}}; 
    \node[] at (4.5,0.15) {\textcolor{cyan}{\tiny$1$}}; 
    
    
    
    \node[] at (1,1) {$R_2$};

\end{scope}

\begin{scope}[scale=1, xshift=150, yshift=0]
	\vertex[fill,label=below:\footnotesize{$s$}](as) at (1,0) {};
	\vertex[fill,label=below:\footnotesize{$1$}](a1) at (2,0) {};
 	\vertex[fill,label=below:\footnotesize{$2$}](a2) at (3,0) {};
	\vertex[fill,label=below:\footnotesize{$3$}](a3) at (4,0) {};
	\vertex[fill,label=below:\footnotesize{$t$}](at) at (5,0) {};
	


	\draw[-stealth, thick] (as) to[out=60,in=120] (a1);
	\draw[-stealth, thick] (a1) to[out=60,in=120] (a3);
	\draw[-stealth, thick] (a3) to[out=60,in=120] (at);
	
    
    
    \node[] at (1.5,0.55) {\textcolor{cyan}{\tiny$2$}}; 
    \node[] at (3, 0.75)  {\textcolor{cyan}{\tiny$2$}}; 
    \node[] at (4.5,0.55) {\textcolor{cyan}{\tiny$2$}}; 
    
    \node[] at (1,1) {$R_3$};
    
\end{scope}

\end{tikzpicture}
\end{center}
    \caption{\(R_1,\, R_2,\, R_3\) are the exceptional routes of the framed full DAG given in Figure~\ref{fig:G(2,7) and Q}.}
    \label{fig:G(2,7) exceptionals}
\end{figure}
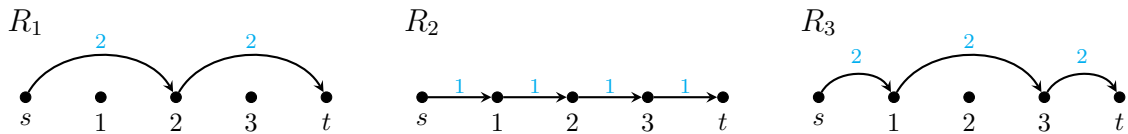

\begin{corollary}
\label{cor:specialcount}
Given a full DAG $G$, with $M$ as defined in Theorem~\ref{thm:poweroftwo}, $\calF_1(G)$ has at least $2^{M-1}$ special simplices.
\end{corollary}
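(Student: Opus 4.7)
The plan is to produce $2^{M-1}$ distinct special simplices by exhibiting one for each of the $2^{M-1}$ distinct DKK triangulations given by Corollary~\ref{cor:countDKKfull}, using Theorem~\ref{thm:fullspecial} to certify that each is special.

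First I would recall from Theorem~\ref{thm:poweroftwo} that $G$ admits exactly $2^M$ ample framings, and from Corollary~\ref{cor:countDKKfull} that these come in pairs producing $2^{M-1}$ distinct DKK triangulations, where the two framings in a pair are related by swapping the labels $1 \leftrightarrow 2$ in the labeling of edges induced via Corollary~\ref{cor:amplevialabeling}. For each ample framing $F$ with exceptional set $\calE(F)$, Theorem~\ref{thm:fullspecial} states that $\calE(F)$ forms a special simplex of $\calF_1(G)$.

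Next I would verify that distinct DKK triangulations yield distinct exceptional sets, so that we genuinely obtain $2^{M-1}$ simplices. If two ample framings $F_1, F_2$ are related by the $1\leftrightarrow 2$ swap, then by Lemma~\ref{lem:abovebelowexceptionals} the exceptional routes under either framing are precisely the routes traversing only ``first" edges or only ``second" edges, so $\calE(F_1) = \calE(F_2)$. Conversely, by Corollary~\ref{cor:amplevialabeling} the exceptional set determines the edge labeling up to a global swap, and hence the exceptional set determines the framing up to this swap. So if $F$ and $F'$ are ample framings belonging to different pairs then $\calE(F) \neq \calE(F')$. Since the vertices of $\calF_1(G)$ are in bijection with routes of $G$ by Proposition~\ref{prop:flowbasics}, distinct sets of routes index distinct sets of vertices, hence distinct simplices.

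Combining these observations, the $2^{M-1}$ pairs of ample framings give rise to $2^{M-1}$ distinct sets $\calE(F)$, each of which is a special simplex of $\calF_1(G)$ by Theorem~\ref{thm:fullspecial}, proving the corollary. No step here is an obstacle: the argument is essentially bookkeeping once the bijection between ample framings and admissible $\{1,2\}$-labelings (Corollary~\ref{cor:amplevialabeling}) and the special simplex result (Theorem~\ref{thm:fullspecial}) are in hand.
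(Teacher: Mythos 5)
Your proposal is correct and follows essentially the same route as the paper: each ample framing yields a special simplex via Theorem~\ref{thm:fullspecial}, and the count comes from the enumeration of framings up to the $1\leftrightarrow 2$ swap. Your extra step checking that distinct framing-pairs give distinct exceptional sets (via Corollary~\ref{cor:amplevialabeling}) is a worthwhile bit of bookkeeping that the paper's two-line proof leaves implicit.
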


\begin{proof}
For each ample framing of $G$, we get a unique special simplex.
Since there are $2^{M-1}$ ample framings, we have at least that many special simplices in the flow polytope.
\end{proof}


\bibliographystyle{plain}
\bibliography{bibliography}

\end{document}